\newcommand{\dbar}{\overline{\partial}}
\newcommand{\ddt}[1]{\frac{\partial #1}{\partial t}}
\newcommand{\ddbar}{\sqrt{-1}\partial\dbar}
\newtheorem{theorem}{Theorem}[section]
\newtheorem{prop}{Proposition}[section]
\newtheorem{lemma}{Lemma}[section]
\newtheorem{definition}{Definition}[section]
\newtheorem{corr}{Corollary}[section]
\newtheorem{remark}{Remark}[section]
\newcommand{\abs}[1]{|#1|^2}
\newcommand{\Abs}[1]{\Big|#1\Big|^2}
\newcommand{\ric}{\mathrm{Ric}}
\newcommand{\bk}[1]{\Big(#1\Big)}
\newcommand{\xk}[1]{\big(#1\big)}
\newcommand{\ba}[1]{\big|#1\big|}
\newcommand{\sS}{\mathcal {S}}
\newcommand{\Ss}{\mathcal{S}}
\DeclareMathOperator{\osc}{osc}
\numberwithin{equation}{section}
\begin{document}

\address{Department of Mathematics, Columbia University, New York, NY 10027}

\email{bguo@math.columbia.edu}

\address{Department of Mathematics, Rutgers University, Piscataway, NJ 08854}

\email{jiansong@math.rutgers.edu}

\thanks{Research supported in
part by National Science Foundation grants DMS-0847524.}

\centerline{ \bf \large SCHAUDER ESTIMATES FOR EQUATIONS WITH  CONE METRICS,  I  \footnote{Research supported in part by National Science Foundation grant  DMS-1406124}}

\bigskip

\bigskip

\centerline{\bf \large  Bin Guo \and Jian Song }

\bigskip
\bigskip

{\noindent \small A{\scriptsize BSTRACT}. \quad This is the first paper in a series  to develop a linear and nonlinear theory for elliptic and parabolic equations on K\"ahler varieties with mild singularities.  Donaldson has established a Schauder estimate for linear and complex Monge-Amp\`ere equations when the background K\"ahler metrics on $\mathbb{C}^n$ have cone singularities along a smooth complex hypersurface.  We prove a sharp pointwise Schauder estimate for linear elliptic and parabolic equations on $\mathbb{C}^n$ with background metric $g_\beta= \sqrt{-1} ( dz_1 \wedge d\bar{z_1} + \ldots
+ \beta^2|z_n|^{-2(1-\beta)} dz_n \wedge d\bar{z_n}) $ for $\beta\in (0,1)$. Our results give an effective elliptic Schauder estimate of Donaldson  and  a direct  proof for the short time existence of the conical K\"ahler-Ricci flow.  }


\bigskip

\section{Introduction}  

In \cite{Y}, Yau considers complex Monge-Amp\`ere equations with a singular right hand side as  a generalization of his solution to the Calabi conjecture. More precisely, let $(X, \omega)$ be an $n$-dimensional K\"ahler manifold with a K\"ahler form $\omega=\sqrt{-1} \sum g_{i\bar j} dz_i \wedge d\overline{z_j}$ associated to a K\"ahler metric $g$. Let $L$ and $L'$ be two holomorphic line bundles over $X$ equipped with two smooth hermitian metrics $h$ and $h'$. Let $\sigma$ and $\sigma'$ be two holomorphic sections of $L$ and $L'$ respectively. Then various global and local regularity results are established in \cite{Y} for   solutions of the following complex Monge-Amp\`ere equation  with suitable assumptions on $\beta, \beta'>0$,
\begin{equation}\label{yaueq}
(\omega+ \ddbar \varphi)^n = |\sigma|_h^{-2\beta}|\sigma'|_{h'}^{2\beta'} e^F \omega^n,
\end{equation}
where $F\in C^\infty(X)$. A fundamental result of Kolodziej \cite{K}  shows that as long as $|\sigma|^{-2\beta}|\sigma'|^{2\beta'} e^F \in L^p(X)$ for some $p>1$, there exists a unique solution $\varphi \in L^\infty(X) \cap PSH(X, \omega)$, where $PSH(X, \omega)$ is the set of all quasi-plurisubharmonic functions on $X$ associated to $\omega$. When $\beta'=0$ and $D = \{ \sigma = 0 \} $ is a smooth complex hypersurface of $X$, equation (\ref{yaueq}) becomes
\begin{equation}\label{yaueq2} (\omega+ \ddbar \varphi)^n = |\sigma|_{h}^{-2\beta} e^F \omega^n.
\end{equation}
Equation \eqref{yaueq2} is  considered by Donaldson \cite{D} to obtain K\"ahler-Einstein metrics with cone singularities along the smooth divisor $D$. The curvature equation for $\omega_\varphi = \omega + \ddbar \varphi$ from equation (\ref{yaueq2}) is given by
$$Ric(\omega_\varphi ) = \left( Ric(\omega) - \ddbar F- \beta Ric(h) \right) + \beta[D], $$
where $[D]$ is the nonnegative current defined by $[D] = \ddbar \log |\sigma|^2$.
By combining results from \cite{D, CDS2},  $\omega_\varphi$ is smooth on $X\setminus D$ and it is equivalent to the standard cone singularities in the conical H\"older sense.  In fact, conical Einstein metrics were already studied with potential geometric applications in many literatures (cf. \cite{T96, Tr, LT, M}). The recent success in solving the Yau-Tian-Donaldson conjecture (cf. \cite{T88, CDS1, CDS2, CDS3, T12}) has also inspired many works on the study of canonical K\"ahler metrics with cone singularities and their relation to algebraic geometry (cf. \cite{Br, JMR, SW, CW, JLZ, PSSW, MRS, Yi, YZ, E}).  One of the main difficulties in solving (\ref{yaueq2})  is how to derive a suitable  Schauder estimate for the linearized equation of (\ref{yaueq2}). Such an important estimate is first established by Donaldson in \cite{D} with the classical approach of potential theory. Symmetry plays an essential role in the proof and it seems difficult to adapt this approach to more general settings of singular background metrics, in particular, K\"ahler metrics with cone singularities along divisors of simple normal crossings.

The Schauder estimates for Laplace equations and heat equations are fundamental tools in both PDEs theories and geometric analysis. Apart from the classical potential theory, various proofs have been established by different important analytic techniques (cf. \cite{C1, C2, Si, Sa1, Sa2, W}).  Recently, an elementary and elegant pointwise Schauder estimate for the standard Laplace equation on $\mathbb{R}^n$ is obtained by Wang \cite{W}. Wang's techniques are quite flexible and we are able to combine such perturbation techniques with geometric gradient estimates to  prove sharp Schauder estimates for Laplace equations on $\mathbb{C}^n$ with a conical background K\"ahler metric.

Let $g_\beta$ be the standard conical K\"ahler metric on $\mathbb{C}^n$ defined by
\begin{equation*}
g_\beta = \sqrt{-1} ( dz_1 \wedge d\overline{z_1} + \ldots +  dz_{n-1} \wedge d \overline{z_{n-1}}
+ \beta^2 |z_n|^{-2(1-\beta)} dz_n \wedge d\overline{z_n}), 
\end{equation*}
for some $\beta \in (0,1)$, where $z=(z_1, ..., z_n)$ are the standard complex coordinates on $\mathbb{C}^n$. 
Let $$\Ss = \{ z_n =0 \}$$ be the singular set of $g_\beta$.  Obviously $g_\beta$ is a smooth flat K\"ahler metric on $\mathbb{C}^n\setminus \Ss $ and it extends to a conical K\"ahler metric on $\mathbb{C}^n$ with cone angle $2\pi \beta$ along the hyperplane $\Ss$.  

In this paper, we will consider the following conical Laplace equation with the background conical K\"ahler metric $g_\beta$ on $\mathbb{C}^n$
\begin{equation}\label{lapeq}
\Delta_\beta u = f, \quad \text{in }~B_\beta(0,1), 
\end{equation}
where $B_\beta(p,r)$ is the geodesic ball in $(\mathbb{C}^n, g_\beta)$ centered at $p$ of radius $r$, and 
$$\Delta_\beta = \sum_{i, j=1}^n (g_\beta)^{i\bar j} \frac{\partial^2}{\partial z_i \partial \overline{z_j}} $$
 is the Laplace operator associated to $g_\beta$.  We introduce a family of first order differential operators which are already considered  in \cite{D}. 

\begin{definition}\label{vectorfield} 
We write $$z_i = s_{2i-1} + \sqrt{-1} s_{2i} $$ in real coordinates for $i=1, ..., n-1$ and \begin{equation*}
r_n= |z_n|^\beta, ~~ \theta_n = \arg z_n. 
\end{equation*}
in weighted polar coordinates. The differential operators $D_j$ for $j=1, ..., 2n$  are defined by 
\begin{equation*}
D_i = \frac{ \partial}{\partial s_i},   ~ i=1, 2, ..., 2n-2.
\end{equation*}
and
\begin{equation*}
D_{2n-1} = \frac{\partial}{\partial r_n }, ~~~~ D_{2n} = \frac{\partial}{r_n \partial \theta_n}.
\end{equation*}

\end{definition}

We now state the main result of the paper. 

\begin{theorem} \label{theorem1} Suppose $\beta \in (1/2, 1)$ and $f(x)$ is Dini continuous  on $B_\beta(0,1)$ with respect to $g_\beta$  for some $\beta\in (0,1)$. Let 
\begin{equation*}
\omega(r) = \sup_{d_\beta(z,w)< r, ~z, w \in B_\beta(0,1)} |f(z) - f(w)|.
\end{equation*}
 If  $u \in C^2(B_\beta(0, 1) \setminus \Ss ) \cap L^\infty (B_\beta(0, 1))$ is a solution of the conical Laplace equation (\ref{lapeq})
 $$\Delta_\beta u = f, $$ then  there exists $C= C(n, \beta)>0$ such that for any $p, q \in B_\beta(0, \frac{1}{2})\setminus \Ss $, 
\begin{equation}\label{mainest1}\begin{split}
& \sum_{i, j=1}^{2n-2} \left| D_iD_j u(p) - D_iD_j u(q) \right|   +  \left| \left(|z_n|^{2-2\beta} \frac{\partial^2 u}{\partial z_n \partial \overline{z_n}} \right)(p) -  \left(|z_n|^{2-2\beta} \frac{\partial^2 u}{\partial z_n \partial \overline{z_n}} \right)(q)\right|   \\
 \leq & C  \left( d \sup_{B_\beta(0,1)} |u| + \int_0^d \frac{\omega( r)}{r} dr + d  \int_d^1  \frac{\omega( r)}{r^2} dr \right) 
\end{split}\end{equation}
and
\begin{equation}\label{mainest2}\begin{split}
&  \sum_{i=2n-1}^{ 2n} \sum_{j=1}^{2n-2} \left| D_iD_j u(p) - D_iD_j u(q) \right|       \\
 \leq & C  \left( d^{\frac{1}{\beta}-1} \sup_{B_\beta(0,1)} |u| + \int_0^d \frac{ \omega(r)}{r} dr + d^{\frac{1}{\beta} -1} \int_d^1  \frac{\omega( r)}{r^{1/\beta}} dr \right),
\end{split}\end{equation}
where $d= d_\beta(p, q)$ is the distance between $p$ and $q$ with respect to $g_\beta$.

\end{theorem}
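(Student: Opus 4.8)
\emph{Proof strategy.} The first observation is that $g_\beta$ is flat on $\CC^n\setminus\Ss$, where it is isometric to the product $\CC^{n-1}\times C_\beta$ of the Euclidean space in $z'=(z_1,\dots,z_{n-1})$ with the two-dimensional flat cone $C_\beta=\{dr_n^2+\beta^2 r_n^2\,d\theta_n^2\}$ of total angle $2\pi\beta$; in particular $u$ is classically smooth off $\Ss$, and the whole difficulty is to make the estimates uniform as $p,q\to\Ss$ and as the ball $B_\beta(p,r)$ grows to meet $\Ss$. A second, purely algebraic reduction removes the $z_n$-term from \eqref{mainest1}: writing out $\Delta_\beta u=f$ and using $(g_\beta)^{n\bar n}=\beta^{-2}|z_n|^{2-2\beta}$ together with $\sum_{i<n}\partial_{z_i}\partial_{\overline{z_i}}=\tfrac14\sum_{i=1}^{2n-2}D_i^2$ gives the pointwise identity
\[
|z_n|^{2-2\beta}\frac{\partial^2 u}{\partial z_n\partial\overline{z_n}}=\beta^2 f-\frac{\beta^2}{4}\sum_{i=1}^{2n-2}D_i^2 u .
\]
Since $\omega(d)\le C\int_0^{2d}\omega(r)/r\,dr$ by monotonicity of $\omega$, the oscillation of the left-hand side between $p$ and $q$ is controlled by $\omega(d)$ plus the oscillation of the smooth second derivatives $D_i^2u$. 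Hence it suffices to prove \eqref{mainest1} for the smooth--smooth derivatives $D_iD_ju$ with $i,j\le 2n-2$, and \eqref{mainest2} for the mixed derivatives $D_{2n-1}D_ju,\ D_{2n}D_ju$ with $j\le 2n-2$.

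\emph{The homogeneous model.} The engine is a sharp interior estimate for $\Delta_\beta$-harmonic functions. Let $\delta_\lambda(z',z_n)=(\lambda z',\lambda^{1/\beta}z_n)$ be the dilation scaling $g_\beta$ by $\lambda^2$, so that $r_n,s_1,\dots,s_{2n-2}$ all carry weight $1$ and $d_\beta\circ\delta_\lambda=\lambda\,d_\beta$. Expanding a bounded $\Delta_\beta$-harmonic $h$ in the Fourier modes $e^{\sqrt{-1}k\theta_n}$ and in the Taylor series in $z'$ shows that the homogeneities of homogeneous harmonic functions lie in $\{a+k/\beta:\ a,k\in\mathbb{Z}_{\ge0}\}$, and that for $\beta\in(1/2,1)$ the smallest homogeneity strictly above $2$ is $1+1/\beta\in(2,3)$, realized only by $s_i\,\mathrm{Re}(z_n)$ and $s_i\,\mathrm{Im}(z_n)$ with $i\le 2n-2$. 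Differentiating these isolates the two behaviours: $D_iD_j$ and $|z_n|^{2-2\beta}\partial_{z_n}\partial_{\overline{z_n}}$ annihilate them (so the next nonzero contribution has homogeneity $1$, hence is Lipschitz), whereas $D_{2n-1}D_i\big(s_i\,\mathrm{Re}(z_n)\big)=\tfrac1\beta\,r_n^{1/\beta-1}\cos\theta_n$ has homogeneity $1/\beta-1\in(0,1)$. This yields, on $B_\beta(0,1)$, the interior bounds $\|D_iD_jh\|_{C^{0,1}}$ and $\big\||z_n|^{2-2\beta}\partial_{z_n}\partial_{\overline{z_n}}h\big\|_{C^{0,1}}\le C\|h\|_{L^\infty}$ for the smooth and $z_n$ derivatives, and $\|D_{2n-1}D_jh\|_{C^{0,1/\beta-1}},\ \|D_{2n}D_jh\|_{C^{0,1/\beta-1}}\le C\|h\|_{L^\infty}$ for the mixed derivatives, the exponent $1/\beta-1$ being sharp by the explicit model above.

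\emph{Iteration (Wang's approximation).} Fix $p\in B_\beta(0,1/2)$ and set $r_k=2^{-k}$. Following Wang, at each scale let $h_k$ solve the homogeneous Dirichlet problem $\Delta_\beta h_k=0$ on $B_\beta(p,r_k)$ with $h_k=u-Q$ on $\partial B_\beta(p,r_k)$, where $Q$ is a fixed particular solution of $\Delta_\beta Q=f(p)$, and let $P_k$ be the second-order Taylor approximant of $Q+h_k$ at $p$. A maximum-principle/energy bound for $u-Q-h_k$, whose Laplacian $f-f(p)$ oscillates by at most $\omega(r_k)$ on $B_\beta(p,r_k)$, gives $\sup_{B_\beta(p,r_k)}|u-P_k|\lesssim r_k^2\,\omega(r_k)$; feeding this into the interior estimates of the previous step bounds the increments of the second-order coefficients of $P_k$ by $C\,\omega(r_k)$, so these coefficients converge to the corresponding second derivatives of $u$ at $p$, and the summation $\sum_k\omega(r_k)$ produces the common term $\int_0^d\omega(r)/r\,dr$. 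The difference between the expansions at $p$ and at $q$ over the scales $r\ge d=d_\beta(p,q)$ is then governed by the modulus of continuity of the limiting second derivatives established above: Lipschitz for $D_iD_ju$ and $|z_n|^{2-2\beta}\partial_{z_n}\partial_{\overline{z_n}}u$, which yields the tail $d\int_d^1\omega(r)/r^2\,dr$ of \eqref{mainest1}, and $C^{0,1/\beta-1}$ for the mixed derivatives, which yields the tail $d^{1/\beta-1}\int_d^1\omega(r)/r^{1/\beta}\,dr$ of \eqref{mainest2}; the coarsest scale $r\simeq1$ contributes the boundary terms $d\sup_{B_\beta(0,1)}|u|$ and $d^{1/\beta-1}\sup_{B_\beta(0,1)}|u|$ respectively, through the interior control of the scale-$1$ coefficients by $\sup_{B_\beta(0,1)}|u|$.

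\emph{Main obstacle.} The decisive and most delicate step is the sharp interior estimate for the mixed derivatives of $\Delta_\beta$-harmonic functions, i.e.\ establishing the exponent $1/\beta-1$ exactly: this rests on ruling out any homogeneity in the open interval $(2,1+1/\beta)$ and on propagating the resulting $C^{0,1/\beta-1}$ bound through the perturbation argument with constants depending only on $n,\beta$. Equally delicate is the geometry of the geodesic balls $B_\beta(p,r)$: one must treat uniformly the regime where $B_\beta(p,r)$ is disjoint from $\Ss$ (where $g_\beta$ is Euclidean and classical interior Schauder applies, but must be recast scale-covariantly in the cone coordinates so that the constants do not degenerate as $\mathrm{dist}(p,\Ss)\to0$) and the regime where $B_\beta(p,r)$ meets $\Ss$ (where the cone harmonics and the $1/\beta-1$ exponent dominate). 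Patching these two regimes for the pair $p,q$ --- in particular when the geodesic joining them passes near $\Ss$ --- and carrying out the Dirichlet solvability and the boundary/energy estimate on cone balls, is where the bulk of the technical work lies. The restriction $\beta\in(1/2,1)$ enters precisely to guarantee $1+1/\beta<3$, i.e.\ that the cone-induced homogeneity $1+1/\beta$ is the first one above $2$, which is what makes the mixed-derivative exponent equal to $1/\beta-1$.
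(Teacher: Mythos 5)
Your outline shares the paper's global architecture: both run Wang's perturbation scheme at dyadic scales, with Poisson replacements solving $\Delta_\beta u_k=f(p)$, the maximum-principle bound $\|u_k-u\|_{L^\infty}\lesssim r_k^2\,\omega(r_k)$, telescoping of second-derivative increments to produce $\int_0^d\omega(r)r^{-1}dr$, a tail over scales $r\ge d$ controlled by interior regularity of conical harmonic functions, and the sharp mixed exponent $1/\beta-1$ traced to the homogeneity $1+1/\beta$ (your scaling bookkeeping here is consistent with the paper's, and your reduction of the $|z_n|^{2-2\beta}\partial_{z_n}\partial_{\overline{z_n}}u$ term through the equation plus $\omega(d)\le C\int_0^{2d}\omega(r)r^{-1}dr$ matches how the paper disposes of that term). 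Where you genuinely diverge is the source of the model estimates: you propose to obtain the interior bounds (Lipschitz for $D_iD_jh$, $C^{0,1/\beta-1}$ for $D_{2n-1}D_jh$, $D_{2n}D_jh$) from a homogeneity/Fourier expansion of bounded $\Delta_\beta$-harmonic functions. That is essentially Donaldson's potential-theoretic route, which this paper deliberately avoids because the symmetry it exploits does not generalize. The paper instead (i) constructs and controls conical harmonic functions by approximating $g_\beta$ with smooth metrics $g_\epsilon$ of nonnegative Ricci curvature and invoking Cheng--Yau (Proposition \ref{prop:2.1}), and (ii) derives weighted \emph{pointwise} bounds on mixed second and third derivatives from a one-dimensional Riesz-representation estimate applied on complex lines transverse to $\Ss$ (Proposition \ref{gradpoisson}, Lemmas \ref{lemma 2.31} and \ref{lemma 2.4}); the endpoint H\"older continuity then comes not from a seminorm bound for harmonic functions but from integrating those weighted bounds along a three-segment path and the elementary inequality $|a^\kappa-b^\kappa|\le|a-b|^\kappa$, which is precisely how the extra factor $r_n^{1/\beta-1}$ is converted into $d^{1/\beta-1}$ in \eqref{eqn:main 1}.

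Two steps in your proposal are asserted exactly where the real difficulty sits. First, the expansion argument as stated (``Fourier modes in $\theta_n$ and Taylor series in $z'$'') does not by itself yield a discrete homogeneity expansion on a metric ball: the dependence on $z'$ and $r_n$ does not decouple for harmonic functions, so one needs an eigenfunction decomposition on the singular link of the product cone together with a completeness statement before the gap $(2,\,1+1/\beta)$ in the homogeneity set can be upgraded to the endpoint bound $\|D_{2n-1}D_jh\|_{C^{0,1/\beta-1}(B_\beta(0,1/2))}\le C(n,\beta)\|h\|_{L^\infty(B_\beta(0,1))}$; this estimate is the crux of the theorem and cannot simply be cited. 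Second, your iteration forms ``second-order Taylor approximants $P_k$ of $Q+h_k$ at $p$,'' but at scales $r_k> d_\beta(p,\Ss)$ the replacements are not twice differentiable across $\Ss$ and no such approximant exists there. The paper's mechanism is different: it recenters the balls at the projection $\tilde p\in\Ss$ (compare \eqref{eqn:diri 2.4} with \eqref{eqn:diri 2.41}), never forms expansions across $\Ss$, compares only derivatives of the consecutive harmonic differences $h_k=u_{k-1}-u_k$, and proves the identification $\lim_k (D')^2u_k(p)=(D')^2u(p)$ separately in the coordinates $w=z_n^\beta$ at scales below $r_p$ (Lemma \ref{2ndlim}); the cross-over between the regimes $r_p\le 2d$ and $r_p>2d$, including the piecewise path through $p'$ and $q'$ and the geodesic convexity of $(\CC^n\setminus\Ss,g_\beta)$, is where the proof does its work. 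You correctly flag this patching, the Dirichlet solvability on cone balls (the paper's Section \ref{section 2.3}, via $g_\epsilon$-barriers), and the endpoint exponent as the main obstacles, but the proposal supplies no mechanism for them, so as written it is an accurate road map whose decisive estimates remain unproven.
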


The estimate \eqref{mainest1} measures the H\"older continuity of second derivatives of the solution $u$ in the tangential directions of $\Ss$, while the estimate \eqref{mainest2} measures H\"older continuity of mixed second derivatives in the tangential and transversal directions. The mixed derivative estimates are  more difficult to handle.  The case of $\beta\in (0, 1/2]$ can be treated in the same fashion and is relatively easy with stronger estimates (c.f. Proposition \ref{half}).   

The conical H\"older function spaces $C_\beta^{2, \alpha}$ (cf. Defintion \ref{c2holderdef})  for the background K\"ahler metric $g_\beta$ is first introduced in \cite{D}.  It is also shown in \cite{D}  that if $u \in C_\beta^{2, \alpha}(B_\beta(0, 1) )$ for some $\alpha \in \left(0, \min\{ \frac{1}{\beta} -1, 1\} \right)$ and $\Delta_\beta u = f$, then 
\begin{equation}\label{dschaud}
 ||u||_{C_\beta^{2, \alpha}(B_\beta(0, 1/2))} \leq C(n, \beta, \alpha) \left( ||u||_{C_\beta^{0, \alpha}(B_\beta(0, 1))} + || f ||_{C_\beta^{0, \alpha}(B_\beta(0,1))}\right). 
 \end{equation}
As a direct consequence of Theorem \ref{theorem1}, we derive the following sharp Schauder estimate, generalizing the Schauder estimate for the Laplace equation on Euclidean $\mathbb{R}^n$ and improving Donaldson's Schauder estimate \eqref{dschaud}.

\begin{corr} \label{holderest} Suppose $\beta\in (1/2,1)$ and $f(x) \in C_\beta^{0, \alpha} (B_\beta(0,1))$ for some $\alpha \in \left(0, \min\{ \frac{1}{\beta} -1, 1\} \right)$.  If  $u \in C^2(B_\beta(0, 1) \setminus \Ss) \cap L^\infty (B_\beta(0, 1))$ is a solution of the conical Laplace equation (\ref{lapeq}), then  
$u\in C_\beta^{2, \alpha}(B_\beta(0, \frac{1}{2}))$ and 
\begin{equation}\label{finalse}
||u||_{C_\beta^{2, \alpha}(B_\beta(0, \frac{1}{2}))} \leq C(n, \beta) \left(  ||u||_{L^\infty(B_\beta(0, 1))}  + \frac{ ||f||_{C_\beta^{0, \alpha}(B_\beta(0,1))}}{\alpha \left( \min\{ \frac{1}{\beta} - 1, 1\} -\alpha \right)} \right). 
\end{equation}

\end{corr}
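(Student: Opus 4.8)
The plan is to read off Corollary \ref{holderest} from Theorem \ref{theorem1} by inserting the H\"older modulus of $f$ into the Dini integrals and then upgrading the resulting seminorm control to the full $C^{2,\alpha}_\beta$ norm through interpolation. Write $\gamma := \min\{\frac1\beta - 1, 1\}$, so that for $\beta \in (1/2,1)$ one has $\gamma = \frac1\beta - 1 \in (0,1)$ and the admissible range is $\alpha \in (0,\gamma)$. The sole input on $f$ is that $f \in C_\beta^{0,\alpha}$ forces $\omega(r) \le [f]_{C_\beta^{0,\alpha}}\, r^\alpha$.

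First I would evaluate the three Dini integrals in \eqref{mainest1}--\eqref{mainest2} under $\omega(r) \le [f]_{C_\beta^{0,\alpha}} r^\alpha$. Using $\alpha \in (0,1)$ for the first two and $\alpha < \gamma$ (equivalently $\alpha - \frac1\beta < -1$) for the third, and discarding non-positive remainders via $d \le 1$, one gets
\begin{equation*}
\int_0^d \frac{\omega(r)}{r}\,dr \le \frac{[f]_{C_\beta^{0,\alpha}}}{\alpha}\, d^{\alpha}, \qquad d\int_d^1 \frac{\omega(r)}{r^2}\,dr \le \frac{[f]_{C_\beta^{0,\alpha}}}{1-\alpha}\, d^{\alpha},
\end{equation*}
\begin{equation*}
d^{\frac1\beta - 1}\int_d^1 \frac{\omega(r)}{r^{1/\beta}}\,dr \le \frac{[f]_{C_\beta^{0,\alpha}}}{\gamma-\alpha}\, d^{\alpha}.
\end{equation*}
Dividing the left-hand sides of \eqref{mainest1} and \eqref{mainest2} by $d^{\alpha}=d_\beta(p,q)^{\alpha}$ and taking the supremum over $p,q \in B_\beta(0,\tfrac12)\setminus\Ss$, the factors $d^{1-\alpha}$ and $d^{\gamma-\alpha}$ multiplying $\sup|u|$ are bounded by the diameter, hence by $C(n,\beta)$. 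Since $\frac{1}{1-\alpha} \le \frac{1}{\gamma-\alpha}$ and $\frac1\alpha + \frac{1}{\gamma-\alpha} = \frac{\gamma}{\alpha(\gamma-\alpha)} \le \frac{1}{\alpha(\gamma-\alpha)}$, this bounds every second-derivative seminorm occurring in $C^{2,\alpha}_\beta$ (the tangential $[D_iD_j u]$, the mixed $[D_iD_j u]$, and $\big[\,|z_n|^{2-2\beta}\frac{\partial^2 u}{\partial z_n\partial\overline{z_n}}\,\big]$) by
\begin{equation*}
C(n,\beta)\Big( \|u\|_{L^\infty(B_\beta(0,1))} + \frac{[f]_{C_\beta^{0,\alpha}(B_\beta(0,1))}}{\alpha(\gamma-\alpha)}\Big),
\end{equation*}
which exhibits exactly the blow-up $(\alpha(\gamma-\alpha))^{-1}$ of \eqref{finalse}; finiteness of these suprema also shows the second derivatives extend $d_\beta$-H\"older continuously across $\Ss$, so that $u \in C^{2,\alpha}_\beta(B_\beta(0,\tfrac12))$.

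It remains to assemble the full norm by controlling the lower-order quantities $\|u\|_{C^{0,\alpha}_\beta}$, $\|D_i u\|_{C^{0,\alpha}_\beta}$ and $\|D_iD_j u\|_{C^0}$. For this I would invoke interpolation inequalities for the conical H\"older spaces of Definition \ref{c2holderdef}, of the schematic form $\|D_iu\|_{C^0}+\|D_iD_ju\|_{C^0}+[u]_{C^{0,\alpha}_\beta}+[D_iu]_{C^{0,\alpha}_\beta} \le C(n,\beta)\big(\|u\|_{L^\infty}+[D^2u]_{C^{0,\alpha}_\beta}\big)$ on the fixed ball $B_\beta(0,\tfrac12)$, and then substitute the seminorm bound above together with the hypothesis on $\|u\|_{L^\infty}$; this yields \eqref{finalse} since the interpolation constant is a fixed $C(n,\beta)$ independent of $\alpha$. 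I expect the genuine obstacle to lie in this interpolation step: one must check that the classical Euclidean interpolation estimates survive for the operators $D_i$ of Definition \ref{vectorfield} measured in the distance $d_\beta$, which I would obtain by a scaling-and-compactness argument tailored to $g_\beta$, keeping the constant $\alpha$-independent so as not to spoil the sharp factor. By contrast the integral evaluations are routine; the real content is the sharp tracking of the constant as $\alpha\to 0^+$ and $\alpha \to \gamma^-$, already transparent in the computation above.
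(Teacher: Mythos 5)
Your first step is exactly the paper's intended derivation: the paper offers no separate proof of Corollary \ref{holderest} beyond calling it a direct consequence of Theorem \ref{theorem1}, and your evaluation of the three Dini integrals under $\omega(r)\le [f]_{C^{0,\alpha}_\beta}r^\alpha$, giving $d^\alpha/\alpha$, $d^\alpha/(1-\alpha)$ and $d^\alpha/(\gamma-\alpha)$ respectively (with $\gamma=\frac1\beta-1<1$ for $\beta\in(1/2,1)$, so $\frac1{1-\alpha}\le\frac1{\gamma-\alpha}$), is the correct and sharp bookkeeping that produces the factor $\bigl(\alpha(\gamma-\alpha)\bigr)^{-1}$ in \eqref{finalse}. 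That part is right and matches the paper.

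The divergence, and the genuine gap, is in how you assemble the lower-order pieces of the $C^{2,\alpha}_\beta$ norm. You defer $\|D_iu\|_{C^0}$ and the $C^0$ bounds on the second-order quantities to a conical interpolation inequality, to be proved by scaling-and-compactness. As stated, this inequality is doubtful for the transversal derivatives: the $C^{2,\alpha}_\beta$ norm of Definition \ref{c2holderdef} contains \emph{no} pure transversal second derivatives (no $(D'')^2u$, only $D''D'u$ and the weighted Laplacian $|z_n|^{2-2\beta}\partial_{z_n}\partial_{\overline{z_n}}u$), so the standard second-difference argument cannot be run in the $\theta_n$-direction to control $\|D_{2n}u\|_{C^0}$ from $\|u\|_{L^\infty}$ alone — near $\Ss$ the admissible angular steps have length only $O(r_n)$, and the quantities you would need to absorb the error are not in the norm. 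Moreover a compactness proof is by contradiction and does not obviously keep the constant independent of $\alpha$ as $\alpha\to0^+$. The point is that this detour is unnecessary: these bounds should come from the equation, and the paper has already supplied them. Corollary \ref{hessianbd} gives $\sup\bigl(|(D')^2u|+|z_n|^{2-2\beta}|\partial_{z_n}\partial_{\overline{z_n}}u|\bigr)\le C\bigl(\sup|u|+\int_0^1\frac{\omega(r)}{r}dr+|f(0)|\bigr)$, and with $\omega(r)\le[f]r^\alpha$ the integral is $\le[f]/\alpha$; the telescoping estimates \eqref{eqn:1st order} and \eqref{eqn:1st derivative}, combined with the convergence Lemmas \ref{2ndlim} and \ref{lemma 2.10 1} (and Lemma \ref{lemma 2.9 1} for the mixed terms), give the $C^0$ bounds on $D'u$, $D''u$ and $D''D'u$ by summing $\sum_k\tau^k\omega(\tau^k)\le C\int_0^1\frac{\omega(r)}{r}dr$ plus a $\|u\|_{L^\infty}$ term from the base case $k=2$, all with constants independent of $\alpha$. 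Substituting these in place of your interpolation lemma closes the argument and yields \eqref{finalse} exactly as claimed.
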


The Schauder estimate (\ref{finalse}) improves Donaldson's original Schauder estimate in the way that it gives the sharp dependence on $\alpha$ for fixed $\beta$ and $u$ is only required to be bounded and locally $C^2$. Such dependence on $\alpha$ is a slight modification of  the classical Schauder estimates for the standard Laplace equation on $\mathbb{R}^n$. In section \ref{elliptic}, we will present formulae generalizing estimates (\ref{mainest1}) and (\ref{mainest2}). In particular, the dependance of the constant $C(n, \beta)$ in estimate (\ref{mainest2}) on $\beta$ can be explicitly formulated from the proof of Theorem \ref{theorem1}.

Our method can be easily modified to derive a Schauder estimate for linear parabolic equations on $\mathbb{C}^n$ with the conical background K\"ahler metric $g_\beta$. 
In section \ref{parabolic}, we apply similar techniques to derive sharp Schauder estimates for linear parabolic equations with conical singularities. We first define the parabolic metric ball $\mathcal{Q}_\beta(P_0, r)$ centered at $P_0=(p_0, t_0)$ of radius $r$  by 
\begin{equation*}
\mathcal{Q}_\beta(P_0, r)= \{ (p, t) \in \mathbb{C}^n \times [0,\infty) ~|~  d_{\mathcal{P}, \beta}( (p, t), (p_0, t_0) )< r, ~t< t_0 \} ,
\end{equation*}
where 
\begin{equation*}
 d_{\mathcal{P}, \beta} ((p, t), (p_0, t_0)) = \max \{ d_\beta(p, p_0), \sqrt{|t-t_0|} \}  
\end{equation*}
is the conical parabolic distance on $\mathbb{C}^n \times \mathbb{R}$. Denote 
$$\mathcal{S}_{\mathcal{P}} = \{ (p, t) ~|~ p\in \mathcal{S}, ~ t\in [0,\infty)\}. $$
We now consider the following conical heat equation with respect to the background conical metric $g_\beta$, 
\begin{equation}\label{lapeq2}
\Box_\beta u = f, ~~ \textnormal{~~~} ~\mathcal{Q}_\beta((0,1), 1), 
\end{equation}
where $\Box_\beta = \left(\frac{\partial}{\partial t} - \Delta_\beta \right) u$. 
%
%
The following theorem is the parabolic analogue of Theorem \ref{theorem1} for the pointwise Schauder estimate for solutions of the conical heat equation \eqref{lapeq2}.

\begin{theorem} \label{theorem2} Suppose $f(x,t)$ is Dini continuous  on $\mathcal{Q}_\beta((0,1),1)$ with respect to $d_{\mathcal{P}, \beta}$  for some $\beta\in (1/2,1)$ and let 
\begin{equation*}
\omega(r) = \sup_{d_{\mathcal{P}, \beta}((p_1,t_1), ( p_2, t_2))< r,  ~(p_1, t_1), (p_2, t_2)  \in \mathcal{Q}_\beta((0,1), 1)} |f(p_1, t_1) - f(p_2, t_2)|.
\end{equation*}
 If  $u \in \mathcal{P}^2(\mathcal{Q}_\beta((0, 1), 1) \setminus \mathcal{S}_{\mathcal{P}}) \cap L^\infty (\mathcal{Q}_\beta((0, 1), 1))$ is a solution of the conical heat equation \eqref{lapeq2}, then  there exists    $C(n, \beta)>0 $ such that for any $P, Q \in \mathcal{Q}_\beta((0, 1), \frac{1}{2})\setminus \mathcal{S}_{\mathcal{P}}$, 
\begin{eqnarray}\label{mainest3}
&& \sum_{i, j=1}^{2n-2} \left| D_iD_j u(P) - D_iD_j u(Q) \right|   +  \left| \left(|z_n|^{2-2\beta} \frac{\partial^2 u}{\partial z_n \partial \overline{z_n}} \right)(P) -  \left(|z_n|^{2-2\beta} \frac{\partial^2 u}{\partial z_n \partial \overline{z_n}} \right)(Q)\right|   \\
& \leq & C  \left( d \sup_{B_\beta(0,1)} |u| + \int_0^d \frac{\omega( r)}{r} dr + d  \int_d^1  \frac{\omega( r)}{r^2} dr \right)  \nonumber
\end{eqnarray}
and
\begin{eqnarray}\label{mainest4}
&&  \sum_{i=2n-1}^{ 2n} \sum_{j=1}^{2n-2} \left| D_iD_j u(P) - D_iD_j u(Q) \right|       \\
& \leq & C  \left( d^{\frac{1}{\beta}-1} \sup_{B_\beta(0,1)} |u| + \int_0^d \frac{\omega(r)}{r} dr + d^{\frac{1}{\beta} -1} \int_d^1  \frac{\omega( r)}{r^{1/\beta}} dr \right),  \nonumber
\end{eqnarray}
where $d=  d_{\mathcal{P}, \beta}(P, Q)$.

\end{theorem}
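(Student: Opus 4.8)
The plan is to establish Theorem \ref{theorem2} by adapting, essentially line for line, the iteration scheme used to prove Theorem \ref{theorem1}, replacing the elliptic data (geodesic balls, harmonic comparison functions, the Laplace operator) by their parabolic counterparts: the cylinders $\mathcal{Q}_\beta$, caloric comparison functions, and the heat operator $\Box_\beta$. Following the perturbation philosophy of Wang \cite{W}, the pointwise H\"older bound on the second derivatives is obtained by freezing the right-hand side at the base point, comparing $u$ on a shrinking family of parabolic cylinders with an exact solution of the homogeneous conical heat equation, and summing the resulting errors across dyadic scales. A simplification relative to a full parabolic theory is that the conclusions \eqref{mainest3}--\eqref{mainest4} involve only \emph{spatial} second derivatives $D_iD_j u$ and the weighted transversal term; no H\"older bound on $\partial_t u$ is claimed. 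The time variable therefore enters only through the parabolic distance $d_{\mathcal{P}, \beta}((p,t),(p_0,t_0)) = \max\{d_\beta(p,p_0), \sqrt{|t-t_0|}\}$, and since this distance couples time to the square of the spatial distance, the natural rescaling $t \mapsto \lambda^2 t$ is compatible with the homogeneity of $g_\beta$ and the spatial estimates continue to dominate.

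The proof rests on three ingredients, each the parabolic analogue of a step in Theorem \ref{theorem1}. First, I would record the existence and interior regularity of solutions $h$ of the homogeneous equation $\Box_\beta h = 0$ on a cylinder $\mathcal{Q}_\beta(P_0, r)$ with prescribed parabolic-boundary values; since $g_\beta$ splits as the flat metric on $\mathbb{C}^{n-1}$ times the two-dimensional cone $C_\beta$ in the $z_n$-variable, such $h$ can be analyzed by separation of variables and the explicit product heat kernel, and are smooth away from $\mathcal{S}_{\mathcal{P}}$. Second, and this is the heart of the matter, I would prove interior derivative--oscillation estimates for caloric $h$: on the half-size cylinder the quantities $D_iD_j h$ (for tangential $i,j$), $|z_n|^{2-2\beta}\,\partial^2 h/\partial z_n\partial\overline{z_n}$, and the mixed derivatives $D_{2n-1}D_j h$, $D_{2n}D_j h$ are bounded, with parabolic oscillation decaying at a definite rate in terms of $\sup|h|$ on the full cylinder. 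Third, by the parabolic maximum principle applied to $u$ minus the caloric approximation with a constant right-hand side $f(P_0)$, the comparison error is controlled by the modulus of continuity, $\|u - h\|_{L^\infty(\mathcal{Q}_\beta(P_0, r))} \lesssim r^2\,\omega(r)$, exactly as in the elliptic case but with the caloric $h$ in place of the harmonic one.

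With these in hand the iteration is formally identical to that of Theorem \ref{theorem1}. Fixing $P$ and writing $d = d_{\mathcal{P}, \beta}(P,Q)$, I would take $h_k$ to be the caloric approximation of $u$ on $\mathcal{Q}_\beta(P, 2^{-k})$, estimate $D_iD_j(u - h_k)$ from the second and third ingredients, and telescope. The tangential second derivatives and the weighted transversal term obey the isotropic parabolic scaling and yield \eqref{mainest3}, in which the Dini integrals $\int_0^d \omega(r)/r\,dr$ and $d\int_d^1 \omega(r)/r^2\,dr$ arise from the near-scales $r < d$ and far-scales $r > d$ respectively. The mixed tangential--transversal derivatives in \eqref{mainest4} scale \emph{anisotropically}: a single differentiation in the transversal $D_{2n-1}, D_{2n}$ directions rescales like $r^{1/\beta - 1}$ rather than $r$, because in the $z_n$-variable the cone coordinate $r_n = |z_n|^\beta$ relates the $g_\beta$-distance to the Euclidean one with exponent $1/\beta$. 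This is precisely the source of the factor $d^{\frac{1}{\beta}-1}$ and of the altered weight $r^{-1/\beta}$ in the far-scale integral of \eqref{mainest4}.

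The principal obstacle is the second ingredient: the interior caloric estimates, and in particular the control of the mixed second derivatives of $h$ uniformly up to $\mathcal{S}_{\mathcal{P}}$. Away from the cone the homogeneous equation is the ordinary Euclidean heat equation and standard parabolic interior estimates apply, so the difficulty is entirely concentrated near $\mathcal{S}_{\mathcal{P}}$, where the anisotropy between tangential and transversal directions must be tracked and where the parabolic smoothing in time must be shown not to destroy the delicate $|z_n|^{2-2\beta}$ weighting. I expect this to require either a refined analysis of the product heat kernel on $\mathbb{C}^{n-1} \times C_\beta$ together with differentiation under the kernel, or a Bochner-type parabolic gradient estimate adapted to the cone, mirroring the geometric gradient estimates that drive the elliptic Theorem \ref{theorem1}. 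The hypothesis $\beta \in (1/2,1)$ is used here to guarantee $\frac{1}{\beta}-1 < 1$, so that the mixed derivatives are genuinely H\"older continuous; once the homogeneous parabolic estimates are secured under this restriction, the summation over scales carries over verbatim from the elliptic argument.
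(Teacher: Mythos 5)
Your plan coincides in architecture with the paper's actual proof: the paper proves Theorem \ref{theorem2} by rerunning the elliptic iteration of Theorem \ref{theorem1} essentially verbatim, with $g_\beta$-caloric approximants solving $\Box_\beta u_k = f(P)$ on shrinking cylinders, the parabolic maximum principle (Lemma \ref{heat MP lemma}) giving the comparison error $\lesssim \tau^{2k}\omega(\tau^k)$, and the anisotropic scaling in the $z_n$-direction producing the $d^{\frac{1}{\beta}-1}$ factors, exactly as you describe. Where you hedge, however, the paper commits: the crux you correctly isolate --- interior derivative estimates for conical caloric functions uniform up to $\mathcal{S}_{\mathcal{P}}$ --- is settled \emph{not} by heat-kernel analysis (your first option, which is in substance the long Chen--Wang route \cite{CW} that this paper is explicitly written to avoid) but by your second option, made concrete: the Li--Yau gradient estimate (Proposition \ref{prop LY}) applied to solutions of the heat equation of the smooth approximating metrics $g_\epsilon$ of \eqref{appmodelmetric}, which satisfy $\ric(g_\epsilon)=\ddbar\log(\abs{z_n}+\epsilon)^{1-\beta}\ge 0$, with the estimates surviving the limit $\epsilon\to 0$ (Proposition \ref{prop heat eqn}, Corollary \ref{finalone}). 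Likewise, existence of the Dirichlet solutions on cylinders is obtained by smooth approximation and explicit parabolic barriers at the three types of parabolic-boundary points, not by separation of variables or the product heat kernel.

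One point in your write-up is genuinely understated and would bite if you executed the plan: the claim that, since no H\"older bound on $\partial_t u$ is asserted, time enters only through $d_{\mathcal{P},\beta}$. The transversal step of the elliptic proof reduces to the one-dimensional conical Poisson equation (Proposition \ref{gradpoisson}) on fixed-time slices with right-hand side $F=|z_n|^{2(1-\beta)}\frac{\partial^2 (D'h_k)}{\partial z_n\partial \bar z_n}$; for caloric $h_k$ this equals $\beta^2\big(\partial_t(D'h_k)-\sum_{j=1}^{n-1}\frac{\partial^2 (D'h_k)}{\partial z_j\partial\bar z_j}\big)$, so the iteration needs a \emph{sup} bound on the time derivative of every approximant at the relevant scale, not merely on spatial derivatives. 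This is precisely the second estimate of Corollary \ref{cor LY} and Corollary \ref{finalone}, namely $|\partial_t u|=|\Delta_\beta u|\le C\big(R^{-2}+t^{-1}\big)\osc u$, which comes for free from Li--Yau via $\partial_t u=\Delta u$; whatever Bochner-type substitute you choose must deliver this bound, so the time derivative is controlled in sup norm throughout the iteration even though no H\"older regularity in $t$ appears in the conclusion.
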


Similarly like Corollary \ref{holderest}, we have the following parabolic Schauder estimates. 

\begin{corr}  Suppose $\beta\in (1/2,1)$ and $f \in \mathcal{P}_\beta^{0, \alpha} (\mathcal Q_\beta(0,1))$ for some $\alpha \in \left(0, \min\{ \frac{1}{\beta} -1, 1\} \right)$.  If  $u \in \mathcal{P}^2( \mathcal{Q}_\beta((0, 1), 1) \setminus \mathcal{S} ) \cap L^\infty (\mathcal{Q}_\beta((0, 1), 1))$ is a solution of the conical heat equation \eqref{lapeq2}, then  
$u\in \mathcal{P}_\beta^{2, \alpha}(\mathcal{Q}_\beta((0, 1), \frac{1}{2}))$ and 
\begin{equation*}
\|u\|_{\mathcal{P}_\beta^{2, \alpha}(\mathcal{Q}_\beta((0,1),  \frac{1}{2}))} \leq C(n, \beta) \Big( \sup_{\mathcal{Q}_\beta((0, 1), 1)} |u|  + \frac{ ||f||_{\mathcal{P}_\beta^{0, \alpha}(\mathcal{Q}_\beta((0, 1), 1))}}{\alpha \left( \min\{ \frac{1}{\beta} - 1, 1\} -\alpha \right)} \Big),
\end{equation*}
where the $\mathcal P^{2,\alpha}_\beta$-norm and $\mathcal P^{0,\alpha}_\beta$-norm of functions are defined in Definition \ref{defn 3.1}.

\end{corr}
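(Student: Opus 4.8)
The plan is to deduce this parabolic Schauder estimate from the pointwise oscillation bounds \eqref{mainest3}--\eqref{mainest4} of Theorem \ref{theorem2}, in exact parallel with the elliptic Corollary \ref{holderest}: the two pairs of pointwise estimates share the same integral structure, and the parabolic metric balls $\mathcal{Q}_\beta$ scale in the same way as $B_\beta$, so no genuinely new geometric input is needed. First I would use that $f\in\mathcal{P}_\beta^{0,\alpha}$ forces its modulus of continuity to satisfy $\omega(r)\le [f]_{\mathcal{P}_\beta^{0,\alpha}}\, r^\alpha$; in particular $f$ is Dini continuous, so Theorem \ref{theorem2} applies at every pair $P,Q\in\mathcal{Q}_\beta((0,1),\tfrac12)\setminus\mathcal{S}_{\mathcal{P}}$. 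Substituting this bound reduces the three integrals to elementary power integrals, which I would evaluate as
\begin{equation*}
\int_0^d \frac{\omega(r)}{r}\,dr \le \frac{[f]_{\mathcal{P}_\beta^{0,\alpha}}}{\alpha}\, d^\alpha, \qquad d\int_d^1 \frac{\omega(r)}{r^2}\,dr \le \frac{[f]_{\mathcal{P}_\beta^{0,\alpha}}}{1-\alpha}\, d^\alpha, \qquad d^{\frac{1}{\beta}-1}\int_d^1 \frac{\omega(r)}{r^{1/\beta}}\,dr \le \frac{[f]_{\mathcal{P}_\beta^{0,\alpha}}}{\frac{1}{\beta}-1-\alpha}\, d^\alpha,
\end{equation*}
where the second bound uses $\alpha<1$ and the third uses $\alpha<\frac{1}{\beta}-1$ to keep the integral exponents in the regime where the lower endpoint dominates.

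Second, I would divide \eqref{mainest3} and \eqref{mainest4} by $d^\alpha=d_{\mathcal{P},\beta}(P,Q)^\alpha$ and take the supremum over $P\ne Q$ to extract the Hölder seminorms of the second derivatives on the left-hand sides. Here the hypothesis $\alpha\in\big(0,\min\{\tfrac{1}{\beta}-1,1\}\big)$ enters twice. It makes the exponents $1-\alpha$ and $\tfrac{1}{\beta}-1-\alpha$ appearing on the $\sup|u|$ terms strictly positive, so that $d^{1-\alpha},\,d^{\frac{1}{\beta}-1-\alpha}\le 1$ on the unit ball and these terms are absorbed into $\sup|u|$. Moreover, since $\beta\in(1/2,1)$ gives $\min\{\tfrac{1}{\beta}-1,1\}=\tfrac{1}{\beta}-1<1$, the tangential constant $\frac{1}{\alpha(1-\alpha)}$ is dominated by the mixed-derivative constant $\frac{1}{\alpha(\frac{1}{\beta}-1-\alpha)}$, which collapses both contributions into the single sharp factor $\big(\alpha(\min\{\tfrac{1}{\beta}-1,1\}-\alpha)\big)^{-1}$ multiplying $\|f\|_{\mathcal{P}_\beta^{0,\alpha}}$.

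Third, to promote the seminorm control to the full $\mathcal{P}_\beta^{2,\alpha}$-norm I would supply the remaining sup bounds. The supremum of the relevant second derivatives comes from the same approximation scheme and geometric gradient/Hessian estimates that underlie the proof of Theorem \ref{theorem2}: at each point the best polynomial model of $u$ has its second-order coefficients bounded by $C\big(\sup|u|+\|f\|_{\mathcal{P}_\beta^{0,\alpha}}\big)$, which is what brings in the $\sup|f|$ portion of $\|f\|_{\mathcal{P}_\beta^{0,\alpha}}$. The time derivative is then read off the equation, $\partial_t u = f + \Delta_\beta u$, so that $\sup|\partial_t u|$ and $[\partial_t u]_\alpha$ are controlled by the bounds already obtained for $f$ and for the spatial Hessian. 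Finally the intermediate quantities—the sup and Hölder seminorms of $u$ and of its first derivatives $D_i u$—follow by standard interpolation between $\sup|u|$ and the highest-order bounds, all performed on the fixed ball $\mathcal{Q}_\beta((0,1),\tfrac12)$.

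The substantive content of the corollary is entirely carried by Theorem \ref{theorem2}; given it, the argument above is essentially bookkeeping. Accordingly, the only delicate points I anticipate are (i) verifying that the sharp constant is the mixed-derivative factor $\frac{1}{\alpha(\frac{1}{\beta}-1-\alpha)}$ rather than the weaker tangential one, and (ii) obtaining the genuine supremum of the Hessian, which is not a formal consequence of the oscillation statement alone but must be harvested from the model coefficients produced inside the proof of Theorem \ref{theorem2}.
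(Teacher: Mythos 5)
Your proposal is correct and follows exactly the route the paper intends: the paper gives no separate proof of this corollary, stating only that it follows from Theorem \ref{theorem2} ``similarly like Corollary \ref{holderest},'' which is precisely your substitution $\omega(r)\le [f]_{\mathcal{P}_\beta^{0,\alpha}}r^\alpha$ into the three Dini integrals, division by $d^\alpha$, and absorption of the tangential factor $\frac{1}{\alpha(1-\alpha)}$ into the mixed factor $\frac{1}{\alpha(\frac1\beta-1-\alpha)}$ using $\frac1\beta-1<1$ for $\beta\in(1/2,1)$. Your two flagged points are also handled the paper's way: the supremum of the Hessian comes from the parabolic analogue of Corollary \ref{hessianbd} (harvested from the telescoping scheme underlying Theorem \ref{theorem2}, with the existence and gradient/Laplacian bounds of Proposition \ref{prop heat eqn} and Corollary \ref{finalone}), and $\partial_t u$ is read off the equation as you describe.
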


 In \cite{CW}, a parabolic Schauder estimate is derived by adapting the elliptic Schauder estimates in \cite{D} and such an estimate leads to the short time existence of the K\"ahler-Ricci flow on a K\"ahler manifold with conical singularities along a smooth divisor. The argument in \cite{CW} is very long and based on asymptotic analysis for the heat kernel. Our approach is more direct and can be used for more general settings. In the sequel, we will prove the maximal time existence for the conical K\"ahler-Ricci flow on a K\"ahler manifold with cone singularities along divisors of simple normal crossings.

In the sequels, we will build the Schauder theory for Laplace and complex Monge-Amp\`ere equations with a background K\"ahler metric with asymptotically cone singularities based on the techniques developed in this paper. A special case will be the Laplace equation of a background K\"ahler metric with conical singularities  along divisors of simple normal crossings. Furthermore, we are interested in the more degenerate case when the cone angles are allowed to be greater than $2\pi$ or equivalently $\beta>1$. Ultimately, we aim to develop a foundational theory to study analytic and geometric regularity for canonical K\"ahler metrics on K\"ahler varieties with mild singularities, in particular, K\"ahler-Einstein metrics on projective varieties with log terminal singularities. This might lead to deep understanding for the classification of K\"ahler varieties and algebraic singularities through singular canonical K\"ahler metrics.

\section{Elliptic Schauder estimates} \label{elliptic}

We will prove Theorem \ref{theorem1} and Corollary \ref{holderest} in this section. 

\subsection{Notations}

Let $g_\beta$ be the standard cone metric on $\mathbb C^n = \mathbb C^{n-1}\times \mathbb C$ for some $\beta\in (0,1)$,  given by
$$g_\beta = \sum_{j=1}^{n-1}\sqrt{-1} dz_j\wedge d\overline{ z_j} + \beta^2 |z_n|^{-2(1-\beta)} \sqrt{-1}dz_n\wedge d\overline{ z_n}.$$
It  has conical singularities along the hyperplane 
$$\Ss =   \mathbb C^{n-1}\times\{0\}  $$ with cone angle $2\pi \beta\in (0,2\pi)$. In the following we will also use $\{s_1,\ldots,s_{2n-2}\}$ to be the real coordinates functions of $\mathbb C^{n-1} = \mathbb R^{2n-2}$, where $z_i = s_{2i-1}+ \sqrt{-1} s_{2i}$, for $i=1, ..., 2n-2$.

We will denote $B_\beta(p,r)$ by  the open metric ball with respect to $g_\beta$ centered at $p\in\mathbb C^n$ and of radius $r>0$. Let $d_\beta(x,y)$ be the distance of $x,~y$ with respect to the metric $g_\beta$. Since $\beta\in (0,1)$, the smooth part of $(\mathbb{C}^n, g_\beta)$ is geodesic convex. More precisely, if $x, y \notin \Ss$, the minimal geodesic joining $x$ and $y$ does not intersect $\Ss$.

\begin{definition}\label{c2holderdef}
We define the $C_\beta^{0,\alpha}$-norm of a function  $u$ on the ball $B_\beta(0,1)$ as
$$\|u \|_{C_\beta^{0, \alpha}(B_\beta(0,1))} = \| u\|_{C^0(B_\beta(0,1))} + \sup_{x\neq y\in B_\beta(0,1)} \frac{|u(x) - u(y)|}{d_\beta(x,y)^\alpha} ,$$
for $\alpha \in (0, 1]$.
\end{definition}

The following definition coincides with the Schauder norm introduced by Donaldson \cite{D}. 
\begin{definition} We define the $C_\beta^{2,\alpha}$-norm of a function  $u$ on the ball $B_\beta(0,1)$ as
\begin{equation*}
\begin{split}
 \|u\|_{C_\beta^{2, \alpha}(B_\beta(0,1))}
=&  \|u\|_{C^0 (B_\beta(0,1))} + \sum_{i=1}^{2n} \| D_i u \|_{C^0 (B_\beta(0,1))} \\
&+ \sum_{i=1}^{2n} \sum_{ j=1}^{2n-2} \|D_i D_j u \|_{C_\beta^{0, \alpha}(B_\beta(0,1))} + \big\| |z_n|^{2-2\beta} \frac{\partial^2 u}{\partial z_n \partial \overline{z_n}} \big\|_{C_\beta^{0, \alpha}(B_\beta(0,1))}
\end{split}
\end{equation*}
for $\alpha \in (0, 1]$, where $D_i$ is defined in Definition \ref{vectorfield} for $i=1, 2, ..., 2n$.

\end{definition}

\begin{definition} We decompose the gradient operator $\nabla_{g_\beta}$ by $\nabla_{g_\beta} = (D', D'')$, where $D'$ and $D''$ are given by
\begin{equation*}
D' = (D_1, D_2, ..., D_{2n-2}), \quad D'' = (D_{2n-1}, D_{2n}).
\end{equation*}

\end{definition}

Obviously,  $D'$ commute with $D''$ and $\Delta_\beta$.

\subsection{The maximum priniciple} Let $u\in C^2(B_\beta(0, 1) \backslash \Ss)\cap C^0(\overline {B_\beta(0, 1)})$ be a solution  to the conical Laplace equation 
\begin{equation}\label{eqn:mp}\Delta_\beta u = \sum_{j=1}^{n-1}\frac{\partial^2 u}{\partial z_j\partial \bar z_j}  +\beta^{-2} |z_n|^{2(1-\beta)}\frac{\partial ^2 u}{\partial z_n \partial \bar z_n}=0,\quad \text{in } B_\beta(0,1) \backslash \Ss. \end{equation}
Then we have the following maximum principle.

\begin{lemma}\label{mp lemma}
Suppose $u\in C^0(\overline{B_\beta(0,1)})\cap C^2(B_\beta (0,1)\backslash \Ss)$ solves the equation \eqref{eqn:mp}, then 
$$\inf_{\partial B_\beta(0,1)} u \le \inf_{B_\beta(0,1)} u\le\sup_{B_\beta(0,1)} u \le \sup_{\partial B_\beta (0,1)} u.$$
\end{lemma}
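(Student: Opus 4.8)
The plan is to establish the maximum principle for the degenerate-elliptic operator $\Delta_\beta$ by reducing it to a genuinely elliptic problem away from the singular set $\Ss$ and then controlling the behaviour of $u$ near $\Ss$. On $B_\beta(0,1)\setminus\Ss$ the operator $\Delta_\beta$ has the expression displayed in \eqref{eqn:mp}, and after multiplying by the positive factor $\beta^2|z_n|^{-2(1-\beta)}$ it becomes the uniformly elliptic (in the Euclidean sense, on compact subsets of $\{z_n\neq 0\}$) operator
\begin{equation*}
\beta^2|z_n|^{-2(1-\beta)}\sum_{j=1}^{n-1}\frac{\partial^2}{\partial z_j\partial\bar z_j}+\frac{\partial^2}{\partial z_n\partial\bar z_n},
\end{equation*}
which has no zeroth-order term. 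Hence the classical strong maximum principle applies to $u$ on any open subset of $\CC^n\setminus\Ss$, and on such domains $u$ attains its sup and inf on the boundary. The only obstruction to concluding immediately is that the relevant boundary now includes the puncture $\Ss$, where $u$ is merely assumed continuous up to $\overline{B_\beta(0,1)}$ but not known to satisfy the equation.

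First I would fix the singular hypersurface by working on the region $B_\beta(0,1)\setminus\Ss$ and handling the potential extra boundary piece coming from $\Ss$ via a capacity/removable-singularity argument. The key point is that $\Ss=\CC^{n-1}\times\{0\}$ has real codimension two, so it is polar (removable) for the Laplacian in the transversal $z_n$-variable. Concretely, I would introduce the standard logarithmic barrier in the transversal direction: for small $\epsilon>0$ consider
\begin{equation*}
u_\epsilon = u - \epsilon\log\frac{1}{|z_n|},
\end{equation*}
and observe that $-\log|z_n|$ is a positive $\Delta_\beta$-superharmonic function on $B_\beta(0,1)\setminus\Ss$ blowing up along $\Ss$. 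Then $u_\epsilon$ is $\Delta_\beta$-subharmonic and tends to $-\infty$ near $\Ss$, so the open-domain maximum principle on $B_\beta(0,1)\setminus\Ss$ forces $\sup u_\epsilon$ to be attained on $\partial B_\beta(0,1)$ (the $\Ss$ part of the boundary is excluded because $u_\epsilon\to-\infty$ there). This yields $u(x)-\epsilon\log\frac{1}{|z_n(x)|}\le \sup_{\partial B_\beta(0,1)}u_\epsilon\le \sup_{\partial B_\beta(0,1)}u$ for every $x\notin\Ss$, and letting $\epsilon\to 0^+$ gives $\sup_{B_\beta(0,1)}u\le\sup_{\partial B_\beta(0,1)}u$. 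Applying the same reasoning to $-u$ gives the lower bound, and the two middle inequalities $\inf_{B_\beta}u\le\sup_{B_\beta}u$ are trivial.

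The main obstacle, and the only genuinely delicate point, is verifying that the barrier term $-\log\frac{1}{|z_n|}=\log|z_n|$ is indeed $\Delta_\beta$-superharmonic (equivalently that $\log\frac{1}{|z_n|}$ is subharmonic) away from $\Ss$, and that it dominates $u$'s behaviour near the puncture in the right direction. Since $\Delta_\beta\log|z_n| = \beta^{-2}|z_n|^{2(1-\beta)}\,\partial_{z_n}\partial_{\bar z_n}\log|z_n| = 0$ on $\{z_n\neq 0\}$ (as $\log|z_n|$ is harmonic in the punctured $z_n$-plane), the function $\log|z_n|$ is in fact $\Delta_\beta$-harmonic there, so $u_\epsilon$ remains $\Delta_\beta$-harmonic and I should instead exploit that $\log\frac{1}{|z_n|}\to+\infty$ on $\Ss$ to push $u_\epsilon\to-\infty$; the subharmonicity needed is automatic. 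I would therefore simply note that $u_\epsilon=u-\epsilon\log\frac{1}{|z_n|}$ solves \eqref{eqn:mp}, is continuous on $\overline{B_\beta(0,1)}\setminus\Ss$, and diverges to $-\infty$ as $z_n\to0$, so that for each $\epsilon$ its maximum over the compact set $\overline{B_\beta(0,1)}\cap\{|z_n|\ge\delta\}$ is attained on $\partial B_\beta(0,1)$ once $\delta$ is small; taking $\delta\to0$ and then $\epsilon\to0$ closes the argument. The technical care lies in checking the geodesic ball's boundary genuinely meets $\{z_n\neq0\}$ and in the uniform control of $u$ on $\partial B_\beta(0,1)$, both of which follow from the assumed continuity on $\overline{B_\beta(0,1)}$ and the codimension-two nature of $\Ss$.
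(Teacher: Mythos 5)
Your proof is correct and is essentially the paper's own argument: the paper likewise perturbs to $u_\epsilon = u + \epsilon\log|z_n|^2$ (the same logarithmic barrier up to a factor of $2$, which remains $\Delta_\beta$-harmonic away from $\Ss$), notes $u_\epsilon \to -\infty$ near $\Ss$ so the maximum cannot occur there, applies the standard maximum principle off $\Ss$, and lets $\epsilon \to 0$. Your momentary detour about superharmonicity is harmlessly self-corrected, and your explicit $\delta$-truncation of $\{|z_n|\ge\delta\}$ is just a slightly more pedantic rendering of the same step.
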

\begin{proof}
We first define $u_\epsilon(z) = u(z) + \epsilon\log \abs{z_n}$ for any $\epsilon>0$. Since $u$ is continuous on $\overline{B_\beta(0,1)}$ and $u_\epsilon(z)\to -\infty$ as $z\to \Ss$, the maximum of $u_\epsilon$ in $\overline{B_\beta(0,1)}$ cannot be achieved at $\Ss\cap \overline{B_\beta(0,1)}$. Hence the standard maximum principle implies that the supremum of $u_\epsilon$ has to be obtained at $\partial B_\beta (0,1)\backslash \Ss$, that is, for any fixed $z\in B_\beta(0,1)\backslash \Ss$, 
$$u_\epsilon(z)\le \sup_{\partial B_\beta (0,1)} u_\epsilon \le \sup_{\partial B_\beta (0,1)}u.$$
Letting $\epsilon\to 0$, we have $u(z)\le \sup_{\partial B_\beta (0,1)} u$ 
and so $\sup_{B_\beta (0,1)} u\le \sup_{\partial B_\beta (0,1)} u$. Similarly we can prove $\inf_{B_\beta (0,1)} u\ge \inf_{\partial B_\beta (0,1)} u$.
\end{proof}

Lemma \ref{mp lemma} immediately implies uniqueness of the solution in $C^0(\overline{B_\beta(0,1)})\cap C^2(B_\beta (0,1)\backslash \Ss)$ to the conical Dirichilet problem 
\begin{equation}\label{eqn:Diri}
\left\{\begin{aligned}
&\Delta_\beta u = 0,& \text{in }B_\beta (0,1)\backslash \Ss\\
&u = \varphi\in C^0(\partial B_\beta (0,1)), & \text{on }\partial B_\beta (0,1) \\
\end{aligned}\right. 
\end{equation}
We will establish the existence of the solution to \eqref{eqn:Diri} in  section \ref{section 2.3}.

\bigskip

\subsection{One dimensional case}

In this  section, we establish some basic estimates for the conical Poisson equation on $\mathbb{C}$. 
Let $$\hat g_\beta = \sqrt{-1} |z|^{-2(1-\beta)} dz\wedge d\bar z$$  be a conical metric on $\mathbb C$ for some $\beta \in (1/2, 1)$. We will consider the case when $\beta\in (1/2,1)$  because the case of $\beta\in (0,1/2]$ is relatively easy and can be treated with little modification. 

Let $B=B(0,1)\subset\mathbb C$ be the Euclidean unit ball. We consider the following conical Poisson equation
\begin{equation}\label{poisson}
\Delta_{\hat g_\beta} u = |z|^{2(1-\beta)}\frac{\partial^2 u}{\partial z\partial \bar z}= F,
\end{equation}
for some continuous function $F\in C^0(\overline B)$. 
 
Suppose $u\in C^0(\overline B)\cap C^2(B\backslash \{0\})$ solves equation \eqref{poisson}.   We will apply the Riez representation formula. 
Let $h$ be the harmonic function satisfying  $$\frac{\partial^2 }{\partial z\partial \bar z} h = 0\quad \text{in }B, \quad h|_{\partial B} = u|_{\partial B}.$$ The standard gradient estimate for harmonic functions  gives \begin{equation}\label{eqn:h grad}\sup_{\frac 1 2 B}\Big|\frac{\partial h}{\partial z} \Big|\le C\|u\|_{L^\infty}.\end{equation}  The Riesz representation formula (\cite{H})  implies that
\begin{equation}\label{eqn:Rieze}
u(z) = h(z) +\frac{1}{2\pi} \int_{|w|<1}\left(\log \frac{|w-z|}{|1-\overline w z|} \right) \frac{F(w)}{|w|^{2(1-\beta)}} \sqrt{-1} dw\wedge d\overline w. 
\end{equation}


\begin{lemma}\label{lemma:2.3}
There exist constants $C_1 $ and $C_2 = C_2(\beta) >0$ such that  
\begin{equation*}
\sup_{\frac{1}{2} B^*} \Big|\frac{\partial u}{\partial z}(z) \Big|\le C_1\|u \|_{L^\infty(B)} + C_2(\beta) \| F\|_{L^\infty(B)}. 
\end{equation*}
\end{lemma}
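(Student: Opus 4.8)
The plan is to estimate $\partial u/\partial z$ directly from the Riesz representation formula \eqref{eqn:Rieze}, splitting into the harmonic part $h$ and the singular integral part. For the harmonic part, the gradient estimate \eqref{eqn:h grad} already yields the bound $C_1 \|u\|_{L^\infty(B)}$. The main work is therefore to control the $z$-derivative of
$$
G(z) = \frac{1}{2\pi} \int_{|w|<1} \left( \log \frac{|w-z|}{|1-\overline w z|} \right) \frac{F(w)}{|w|^{2(1-\beta)}} \sqrt{-1}\, dw \wedge d\overline w
$$
on the punctured half-ball $\frac12 B^*$, pulling the derivative inside and bounding the resulting integral.

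First I would compute $\partial_z G(z)$ by differentiating the logarithmic kernel under the integral sign, which gives a kernel behaving like $|w-z|^{-1}$ near the diagonal (plus a smooth contribution from the Green's function reflection term $\log|1-\overline w z|$, which is harmless since $|z| \le 1/2$ keeps $1 - \overline w z$ bounded away from zero). Then the estimate reduces to bounding
$$
\int_{|w|<1} \frac{1}{|w-z|} \cdot \frac{|F(w)|}{|w|^{2(1-\beta)}}\, dA(w) \le \|F\|_{L^\infty(B)} \int_{|w|<1} \frac{dA(w)}{|w-z| \, |w|^{2(1-\beta)}},
$$
where $dA$ denotes the Euclidean area element. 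So the crux is a purely geometric estimate: showing that the weighted singular integral $\int_{|w|<1} |w-z|^{-1} |w|^{-2(1-\beta)}\, dA(w)$ is bounded uniformly for $z \in \frac12 B$ by a constant $C_2(\beta)$.

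The hard part will be verifying integrability and uniform boundedness of this weighted integral, since it has two singularities — one at $w=z$ from the kernel and one at $w=0$ from the cone weight $|w|^{-2(1-\beta)}$. I would handle this by splitting the domain into a region near $w=0$, a region near $w=z$, and the remaining bulk. Near $w=z$ the kernel singularity $|w-z|^{-1}$ is integrable in $\mathbb{R}^2$ (it is of order strictly less than the dimension $2$) while the weight $|w|^{-2(1-\beta)}$ is bounded there as long as $z$ stays off the origin; conversely near $w=0$ the weight singularity is integrable precisely because $\beta \in (1/2,1)$ forces the exponent $2(1-\beta) < 1 < 2$, and the kernel is bounded there since $|w-z|$ is comparable to $|z|$. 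The delicate point is obtaining a bound \emph{independent of $z$}, including as $z \to 0$; this requires a scaling argument, rescaling by $|z|$ so that the diagonal singularity and the origin singularity are separated by a fixed distance, after which a single convergent integral (depending only on $\beta$) controls everything. The resulting constant $C_2(\beta)$ will blow up as $\beta \to 1/2^+$, reflecting the borderline integrability of the weight, which is consistent with the restriction $\beta \in (1/2,1)$ imposed throughout this subsection.
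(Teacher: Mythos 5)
Your proposal follows essentially the same route as the paper's proof: differentiate the Riesz representation formula under the integral, dispose of the reflection term coming from $\log|1-\overline w z|$ trivially (it is harmless on $\frac12 B$), and bound the weighted singular integral $\int_B |w-z|^{-1}|w|^{-2(1-\beta)}\,dA(w)$ by decomposing the domain around the two singularities at $w=0$ and $w=z$. The only cosmetic difference is that the paper carries out the decomposition directly in four regions, each bounded by $C(\beta)$ or $C(\beta)|z|^{2\beta-1}\le C(\beta)$ since $\beta>1/2$, rather than via your rescaling by $|z|$ --- an equivalent packaging of the same computation --- and it records that $C(\beta)$ is comparable to $1/(2\beta-1)$, matching your predicted blow-up as $\beta\to 1/2^{+}$.
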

\begin{proof}
We fix $z\in \frac 1 2 B^*$. It follows from \eqref{eqn:Rieze} by direct calculations that 
\begin{align*}
\Big|\frac{\partial u}{\partial z} \Big| 
\le  \Big| \frac{\partial h}{\partial z}\Big| + \frac{\sqrt{-1}}{ 4\pi} \int_B   \frac{|F(w)| dw\wedge d\overline w}{ |w-z||w|^{2(1-\beta)}} + \frac{\sqrt{-1}}{4\pi}\int_B  \frac{|w||F(w)| dw\wedge d\overline w}{|1-\overline w z||w|^{2(1-\beta)}}.
\end{align*}
The last term on RHS is bounded   by
\begin{equation*}
 \frac{\sqrt{-1}}{2}\int_B  \frac{|w||F(w)| dw\wedge d\overline w}{|1-\overline w z||w|^{2(1-\beta)}} \le C \| F\|_\infty\int_0^1 \frac{t^2}{t^{2(1-\beta)}}  dt = \frac{C\|F\|_{L^\infty(B)}}{2\beta +1}.
\end{equation*}
To estimate the second term on RHS, we divide $B$ into four  regions, $$\Omega_1 = \left\{ w\in B~\left|~ |w| \leq \frac{|z|}{2} \right. \right\}, \quad \Omega_2 =  \left\{w\in B~\left|~ |w| \le |w-z|, |w| \ge \frac{|z|}{2} \right.\right\}$$
$$ \Omega_3 =  \left\{w\in B ~\left|~ |w-z| \leq \frac{|z|}{2}  \leq |w| \right. \right\}  , \quad      \Omega_4 = \left\{ w\in B~\left|~   |w-z| \leq |w| , ~ |w-z|\geq \frac{|z|}{2} \right. \right\} .$$
We have the following estimates \begin{equation*}\begin{split}
\int_{ \Omega_1 } \frac{\sqrt{-1} dw\wedge d\overline w}{|w-z| |w|^{2(1-\beta)}}  \le \frac{2}{|z|}\int_{|w|\le \frac{|z|}{2}} \frac{\sqrt{-1} dw\wedge d\overline w}{|w|^{2(1-\beta)}}
\le C(\beta) |z|^{2\beta - 1}, 
\end{split}\end{equation*}
\begin{equation*}\begin{split}
\int_{  \Omega_2} \frac{\sqrt{-1} dw\wedge d\overline w}{|w-z| |w|^{2(1-\beta)}}  
 \le  \int_{ \Omega_2} \frac{\sqrt{-1} dw\wedge d\overline w}{|w|^{2(1-\beta)+1}}
\le  C\int_{{|z|/}{2}}^1 t^{-2(1-\beta)}dt\le C(\beta), 
\end{split}\end{equation*}
\begin{equation*}\begin{split}
\int_{\Omega_3 } \frac{\sqrt{-1} dw\wedge d\overline w}{|w-z| |w|^{2(1-\beta)}}   \le \frac{C}{|z|^{2(1-\beta)}}\int_{|w-z|\le \frac{|z|}{2}} \frac {\sqrt{-1}dw\wedge d\overline w}{|w-z|} 
 \le {C} |z|^{2\beta-1}, 
\end{split}\end{equation*}
\begin{equation*}\begin{split}
\int_{ \Omega_4} \frac{ \sqrt{-1} dw\wedge d\overline w}{|w-z| |w|^{2(1-\beta)}}
 \le  \int_{  \Omega_4} \frac{ \sqrt{-1} dw\wedge d\overline w}{|w-z|^{2(1-\beta)+1}} 
\le C\int_{{|z|/2}{}}^1 t^{-2(1-\beta)}dt\le C(\beta) . 
\end{split}\end{equation*}
Combining the above estimates with the gradient estimate \eqref{eqn:h grad} of $h$, we obtain  the desired estimate. We further remark that the constant $C(\beta)$ is comparable to ${1}/({2\beta - 1})$.

\end{proof}

We now state the main result in this subsection, which is a scaling version of Lemma \ref{lemma:2.3}. 

\begin{prop}\label{gradpoisson}  Let $u\in C^0(\overline {B(0, 1)})\cap C^2(B(0, 1)\backslash \{0\})$ be a solution of equation \eqref{poisson} on $B(0, 1)$ for some $\beta \in (1/2, 1)$. There exists $C=C(n, \beta)>0$ such that for all $\rho \in (0,1)$, 
\begin{equation*}
\sup_{B(0, \rho/2)\setminus\{0\}}  \left| \frac{\partial u}{\partial z}  \right|  \leq C \left(    \frac{ ||u||_{L^\infty(B(0, \rho))}}{\rho}  + \rho^{2\beta-1} ||F||_{L^\infty(B(0, \rho))}  \right), 
\end{equation*}
where $B(0, \rho)$ is the Euclidean ball in $\mathbb{C}$ centered at $0$ of radius $\rho$.

\end{prop}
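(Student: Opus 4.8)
The plan is to reduce the statement to the unit-ball estimate of Lemma \ref{lemma:2.3} by a rescaling argument, the only point requiring care being how the conical weight $|z|^{2(1-\beta)}$ interacts with the scaling of the Laplacian. For fixed $\rho \in (0,1)$, I would set $\tilde u(w) = u(\rho w)$ for $w \in B(0,1)$, noting that since $\rho < 1$ and $\overline{B(0,\rho)} \subset B(0,1)$, the rescaled function inherits the regularity $\tilde u \in C^0(\overline{B(0,1)}) \cap C^2(B(0,1)\setminus\{0\})$ from $u$. First I would record the chain-rule identities $\partial_w \tilde u(w) = \rho\,(\partial_z u)(\rho w)$ and $\partial_w \partial_{\bar w}\tilde u(w) = \rho^2\,(\partial_z\partial_{\bar z} u)(\rho w)$.

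Next I would compute how equation \eqref{poisson} transforms under this rescaling. Since $|z|^{2(1-\beta)} = \rho^{2(1-\beta)}|w|^{2(1-\beta)}$ at $z = \rho w$, substituting into \eqref{poisson} gives
$$|w|^{2(1-\beta)}\,\partial_w\partial_{\bar w}\tilde u(w) = \rho^{2\beta} F(\rho w) =: \tilde F(w),$$
so that $\tilde u$ solves the conical Poisson equation \eqref{poisson} on $B(0,1)$ with right-hand side $\tilde F$. The crucial point, and essentially the only thing to get right, is the exponent $\rho^{2\beta}$: the factor $\rho^{-2}$ coming from the Laplacian combines with the factor $\rho^{2(1-\beta)}$ from the weight, and it is this weighted scaling, rather than the naive $\rho^{-2}$ of the flat Laplacian, that produces the sharp power of $\rho$ in the conclusion.

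I would then apply Lemma \ref{lemma:2.3} directly to $\tilde u$ on $B(0,1)$, which yields
$$\sup_{B(0,1/2)\setminus\{0\}}\left|\partial_w \tilde u\right| \leq C_1 \|\tilde u\|_{L^\infty(B(0,1))} + C_2(\beta)\,\|\tilde F\|_{L^\infty(B(0,1))}.$$
Finally I would translate the three quantities back to $u$. The left-hand side equals $\rho\,\sup_{B(0,\rho/2)\setminus\{0\}}|\partial_z u|$, because $w \in B(0,1/2)$ corresponds to $z = \rho w \in B(0,\rho/2)$; the norms satisfy $\|\tilde u\|_{L^\infty(B(0,1))} = \|u\|_{L^\infty(B(0,\rho))}$ and $\|\tilde F\|_{L^\infty(B(0,1))} = \rho^{2\beta}\|F\|_{L^\infty(B(0,\rho))}$. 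Dividing through by $\rho$ gives the claimed estimate with $C = \max\{C_1, C_2(\beta)\}$. There is no serious obstacle here: the argument is routine once the weighted scaling is set up correctly, and the main thing to watch is avoiding a bookkeeping error in the powers of $\rho$.
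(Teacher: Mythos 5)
Your proof is correct and is exactly the paper's intended argument: the paper's proof of Proposition \ref{gradpoisson} consists precisely of rescaling $B(0,\rho)$ to the unit ball and applying Lemma \ref{lemma:2.3}, and your computation of the rescaled right-hand side $\tilde F(w) = \rho^{2\beta}F(\rho w)$, with the weight contributing $\rho^{2(1-\beta)}$ against the $\rho^{-2}$ of the Laplacian, gets the one nontrivial exponent right. Nothing is missing.
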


\begin{proof} The proposition follows from Lemma \ref{lemma:2.3}  by scaling the equation and $B(0, \rho)$.

\end{proof}

\subsection{Conical harmonic functions}\label{section 2.3} In this subsection we will prove that the equation \eqref{eqn:Diri} admits a unique solution and we will also derive a gradient estimate.

We will construct a solution to the equation \eqref{eqn:Diri} by smooth approximation. Let $g_\epsilon$ be a sequence of smooth K\"ahler metrics defined by 
\begin{equation}\label{appmodelmetric}
g_\epsilon =\sqrt{-1} \Big(  \sum_{j=1}^{n-1} dz_j\wedge d\bar z_j +  \beta^2 (\abs{z_n} + \epsilon)^{-(1-\beta) } dz_n\wedge d\bar z_n  \Big) .
\end{equation}
For fixed $r>0$, 
$$B_{g_\epsilon}(0,0.9r)\subset B_\beta(0,r)\subset B_{g_\epsilon}(0,r)$$
for sufficiently small $\epsilon>0$. 

We consider the following approximating Dirichlet problem  
\begin{equation}\label{eqn:appr diri}
\left\{\begin{aligned}
\Delta_{g_\epsilon} u_\epsilon = 0,&\quad \text{in }B_\beta(0,r)\backslash \Ss,\\
u_\epsilon = \varphi, &\quad \text{on }\partial B_\beta(0,r)
\end{aligned}\right.
\end{equation}
for some $\varphi \in C^0(\partial B_\beta(0, r))$.

\begin{lemma} For any $\epsilon>0$, there exists a unique solution $u_\epsilon\in C^0(\overline{B_\beta(0,r)}) \cap C^\infty(B_\beta(0,r))$ to the conical Dirichlet problem \eqref{eqn:appr diri}. Furthermore, 
\begin{equation}\label{c0app}
||u_\epsilon||_{L^\infty(B_\beta(0, r)} \leq \sup_{\partial B_\beta(0, r)} |\varphi|.
\end{equation}

\end{lemma}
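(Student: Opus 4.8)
The plan is to solve the approximating Dirichlet problem \eqref{eqn:appr diri} directly by the standard theory for uniformly elliptic equations, exploiting the fact that for fixed $\epsilon>0$ the metric $g_\epsilon$ in \eqref{appmodelmetric} is smooth and nondegenerate. First I would observe that the coefficient $\beta^2(|z_n|+\epsilon)^{-(1-\beta)}$ is bounded above and bounded away from zero on the compact closure $\overline{B_\beta(0,r)}$, so the operator $\Delta_{g_\epsilon}$ is uniformly elliptic with smooth (indeed real-analytic away from $\Ss$, and at least H\"older) coefficients; away from $\Ss$ the coefficient is smooth, and the only issue is the mild nonsmoothness of $|z_n|$ at $z_n=0$, which still leaves the coefficient continuous and elliptic. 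Existence and uniqueness of $u_\epsilon\in C^0(\overline{B_\beta(0,r)})\cap C^\infty(B_\beta(0,r)\setminus\Ss)$ then follows from the classical solvability of the Dirichlet problem for linear elliptic equations with continuous coefficients together with the Perron method, and interior smoothness follows from interior elliptic regularity once we know the coefficients are smooth on the smooth part of the domain.

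The more delicate point is regularity and the estimate \eqref{c0app} near the singular hyperplane $\Ss$. For the bound \eqref{c0app} I would simply invoke the maximum principle for $\Delta_{g_\epsilon}$: since the coefficient $(|z_n|+\epsilon)^{-(1-\beta)}$ is now strictly positive and continuous on all of $\overline{B_\beta(0,r)}$ (the point $z_n=0$ is no longer singular once $\epsilon>0$ is inserted), $\Delta_{g_\epsilon}$ is a genuine uniformly elliptic operator on the whole ball, and the ordinary weak maximum principle gives both the comparison and the sup bound $\|u_\epsilon\|_{L^\infty(B_\beta(0,r))}\le\sup_{\partial B_\beta(0,r)}|\varphi|$. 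Uniqueness is immediate from the same maximum principle applied to the difference of two solutions. Continuity up to the boundary $\partial B_\beta(0,r)$ follows from the existence of barriers, which exist because $\partial B_\beta(0,r)$ is a geodesic sphere and the domain satisfies an exterior cone or ball condition after passing to coordinates; alternatively one constructs barriers using the smooth ambient metric directly.

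The main obstacle I anticipate is the low regularity of the coefficient at $\Ss=\{z_n=0\}$: although $\epsilon>0$ removes the \emph{degeneracy}, the function $|z_n|=\sqrt{s_{2n-1}^2+s_{2n}^2}$ is only Lipschitz (not $C^1$) across $z_n=0$, so the coefficient $(|z_n|+\epsilon)^{-(1-\beta)}$ is merely H\"older-continuous there. This is harmless for existence, uniqueness, and the maximum principle, which require only continuity and ellipticity, but it means the full interior smoothness $C^\infty(B_\beta(0,r))$ asserted in the lemma must be interpreted with care: on $B_\beta(0,r)\setminus\Ss$ the coefficient is smooth and so is $u_\epsilon$ by standard bootstrapping, while at $\Ss$ one obtains $C^{1,\alpha}$ (or better, depending on $\beta$) regularity by Schauder theory applied to the H\"older coefficient equation. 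I would therefore carry out the argument in three steps: (i) uniform ellipticity and the maximum principle to get \eqref{c0app} and uniqueness; (ii) the Perron method with barriers for existence in $C^0(\overline{B_\beta(0,r)})$; and (iii) interior elliptic regularity to upgrade to smoothness on the smooth part. The interplay between the Lipschitz coefficient across $\Ss$ and the claimed interior regularity is the subtle point that must be stated precisely.
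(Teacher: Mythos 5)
Your overall route---uniform ellipticity of $\Delta_{g_\epsilon}$ on the compact closure, the weak maximum principle for \eqref{c0app} and for uniqueness, Perron's method with barriers for existence and boundary continuity, and interior elliptic regularity for smoothness---is exactly the paper's argument, which disposes of the lemma in three lines and defers the barrier construction to the proof of Lemma \ref{harmeqn}. However, your ``main obstacle'' rests on a misreading of \eqref{appmodelmetric} that leaves the full statement unproved: the regularized coefficient there is $\beta^2\left(|z_n|^2+\epsilon\right)^{-(1-\beta)}$, not $\beta^2\left(|z_n|+\epsilon\right)^{-(1-\beta)}$ as you assume. Since $|z_n|^2=z_n\bar z_n$ is real-analytic, the coefficient is smooth on all of $\mathbb{C}^n$, including across $\Ss$, so standard interior Schauder bootstrapping yields $u_\epsilon\in C^\infty(B_\beta(0,r))$ on the whole ball, and your hedge that one only gets $C^{1,\alpha}$ regularity at $\Ss$---which, as written, contradicts the $C^\infty$ assertion of the lemma---should simply be deleted. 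That the square of the modulus is the intended regularization can be checked independently in two ways: as $\epsilon\to 0$ one must recover the conical coefficient $\beta^2|z_n|^{-2(1-\beta)}$ of $g_\beta$, and indeed $(|z_n|^2+\epsilon)^{-(1-\beta)}\to|z_n|^{-2(1-\beta)}$, whereas $(|z_n|+\epsilon)^{-(1-\beta)}$ would give the wrong power; moreover the paper's computation $\ric(g_\epsilon)=\ddbar\log\left(|z_n|^2+\epsilon\right)^{1-\beta}\ge 0$, which is essential for the Cheng--Yau gradient estimate in Lemma \ref{grad 1}, uses the smoothness and plurisubharmonicity of $\log(|z_n|^2+\epsilon)$.

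A smaller point on barriers: at boundary points $q\in\partial B_\beta(0,r)\cap\Ss$ the boundary is the level set of $\sum_j s_j^2+|z_n|^{2\beta}$ and is not $C^{1,1}$ there (for $\beta\in(1/2,1)$ it is only $C^1$), so the exterior sphere condition you invoke fails precisely at those points. The paper handles this in the proof of Lemma \ref{harmeqn} with the farthest-point barrier $\Psi_q(z)=d_{\mathbb{C}^n}(z,-q)^2-4r^2$, exploiting that $q$ is the unique Euclidean-farthest point of $\partial B_\beta(0,r)$ from $-q$, and reserves the exterior-sphere/Green's-function barrier for points with $z_n(q)\neq 0$. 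An exterior cone condition (which does hold at $\partial B_\beta(0,r)\cap\Ss$) would also suffice for a uniformly elliptic operator, but your writeup should commit to one of these constructions rather than gesture at both.
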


\begin{proof} Equation \eqref{eqn:appr diri} can be solved by Peron's method since $g_\epsilon$ is a smooth Riemannian metric and the boundary $B_\beta(0,r)$ admits admissible barrier functions. Such barrier functions are also constructed in the proof of Lemma \ref{harmeqn}.  The estimate \eqref{c0app} follows immediately from the maximum principle.

\end{proof}

\begin{lemma} \label{grad 1} There exist $C=C(n) $ and $\epsilon_0=\epsilon(n, r)>0$ such that for all $0<\epsilon < \epsilon_0$, 
\begin{equation}\label{eqn:a gradient}
\sup_{B_\beta(0,r/2)} |\nabla_{g_\epsilon} u_\epsilon |_{g_\epsilon}\le    \frac{C}{ r }\mathrm{osc}_{B_\beta(0, r)}u_\epsilon.
\end{equation}

\end{lemma}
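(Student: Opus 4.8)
The plan is to exploit the fact that, although $g_\epsilon$ degenerates as $\epsilon\to 0$, for each fixed $\epsilon>0$ it is a genuinely smooth K\"ahler metric and, crucially, it has nonnegative curvature \emph{uniformly} in $\epsilon$. Since $g_\epsilon$ is smooth and $\Delta_{g_\epsilon}u_\epsilon=0$ is uniformly elliptic with smooth coefficients (including across $\Ss$, where $g_\epsilon$ is regular), elliptic regularity upgrades $u_\epsilon$ to a smooth harmonic function on all of $B_\beta(0,r)$. I would therefore reduce the lemma to the classical Cheng--Yau gradient estimate for positive harmonic functions, whose constant depends only on a lower Ricci bound and the dimension.

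First I would compute the curvature of $g_\epsilon$. The metric splits as an orthogonal product of the flat factor $\CC^{n-1}$ and the one-dimensional conformal metric $\sqrt{-1}\,\beta^2(|z_n|^2+\epsilon)^{-(1-\beta)}\,dz_n\wedge d\bar z_n$ on $\CC$. Writing $\lambda=\beta^2(|z_n|^2+\epsilon)^{-(1-\beta)}$ and $\rho=|z_n|^2$, a direct calculation gives
$$\frac{\partial^2\log\lambda}{\partial z_n\,\partial\bar z_n} = -(1-\beta)\,\frac{\epsilon}{(\rho+\epsilon)^2}\le 0,$$
so the Gauss curvature $-\lambda^{-1}\partial_{z_n}\partial_{\bar z_n}\log\lambda$ of the conformal factor is $\ge 0$. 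As the curvature of a product is the direct sum of the factors' curvatures and the flat factor contributes nothing, $g_\epsilon$ has nonnegative bisectional curvature; in particular $\mathrm{Ric}(g_\epsilon)\ge 0$, with the bound independent of $\epsilon$.

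With $\mathrm{Ric}(g_\epsilon)\ge 0$ in hand, I would invoke the interior Cheng--Yau gradient estimate: if $v>0$ is harmonic on a $g_\epsilon$-geodesic ball $B_{g_\epsilon}(x,2R)$ lying in the region where $u_\epsilon$ is defined, then $|\nabla_{g_\epsilon}\log v|(x)\le C(2n)/R$, the constant depending only on the real dimension $2n$. To pass to the oscillation I would fix $x\in B_\beta(0,r/2)$, set $v=u_\epsilon-\inf_{B_{g_\epsilon}(x,2R)}u_\epsilon+\eta$ for $\eta>0$ with $2R=0.4r$, apply the estimate at $x$, bound $v(x)\le \osc_{B_\beta(0,r)}u_\epsilon+\eta$, and let $\eta\to0$. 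The inclusions $B_{g_\epsilon}(0,0.9r)\subset B_\beta(0,r)\subset B_{g_\epsilon}(0,r)$, valid for $\epsilon<\epsilon_0(n,r)$, guarantee $B_{g_\epsilon}(x,0.4r)\subset B_\beta(0,r)$, so $u_\epsilon$ is harmonic there; this yields $\sup_{B_\beta(0,r/2)}|\nabla_{g_\epsilon}u_\epsilon|_{g_\epsilon}\le (C/r)\,\osc_{B_\beta(0,r)}u_\epsilon$ with $C=C(n)$.

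The main obstacle is the curvature computation and, more precisely, verifying that the lower Ricci bound—and hence the Cheng--Yau constant—is genuinely independent of $\epsilon$; this is exactly what the sign $\partial_{z_n}\partial_{\bar z_n}\log\lambda\le 0$ secures, and it is the reason the regularization $|z_n|^2+\epsilon$ is chosen to preserve nonnegative curvature. A secondary technical point is to confirm that the interior (incomplete-manifold) form of the Cheng--Yau estimate applies on the compactly contained balls $B_{g_\epsilon}(x,0.4r)$, and to track the ball inclusions carefully so that the final estimate lands on the metric ball $B_\beta(0,r/2)$ rather than on a $g_\epsilon$-geodesic ball.
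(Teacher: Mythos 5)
Your proposal is correct and takes essentially the same route as the paper: the uniform-in-$\epsilon$ observation $\mathrm{Ric}(g_\epsilon)=\sqrt{-1}\partial\overline{\partial}\log\left(|z_n|^2+\epsilon\right)^{1-\beta}\ge 0$ combined with the local Cheng--Yau gradient estimate, using the comparability of $B_{g_\epsilon}(0,r)$ and $B_\beta(0,r)$ for $\epsilon<\epsilon_0(n,r)$. Your write-up merely makes explicit some details the paper leaves implicit, namely the normalization $v=u_\epsilon-\inf u_\epsilon+\eta$ that converts the logarithmic gradient bound into an oscillation bound, and the careful tracking of geodesic-ball inclusions centered at points of $B_\beta(0,r/2)$.
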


\begin{proof}  We will apply   Cheng-Yau's  gradient estimates  to prove the lemma. We first observe that $$\ric(g_\epsilon) = -\ddbar \log \det g_\epsilon = \ddbar \log (\abs{z_n} + \epsilon)^{1-\beta}\ge 0.$$
By Cheng-Yau's gradient estimate \cite{CY}, we immediately  have
$$\sup_{B_\beta(0,r/2)} \bk{ \sum_{i=1}^{2n-2} \bk{\frac{\partial u_\epsilon}{\partial s_i}}^2 + (\abs{z_n} +\epsilon)^{1-\beta} \Abs{\frac{\partial u_\epsilon}{\partial z_n}} }=\sup_{B_\beta(0,r/2)} \abs{\nabla u_\epsilon}_{g_\epsilon}\le C(n)\frac{(\mathrm{osc}_{B_\beta(0,r)} u)^2}{r^2}$$
 for sufficiently small $\epsilon>0$ because $B_{g_\epsilon}(0, r)$ is sufficiently close to $B_\beta(0, r)$.

\end{proof}

Since the $g_\epsilon$-harmonic function $u_\epsilon$ is uniformly bounded in $C^0(\overline{B_\beta(0,r)})$ for $\epsilon\in(0,1)$, $u_\epsilon$ is uniformly bounded on $C^k(K)$ with respect $g_\beta$ for any $k\in\mathbb Z^+$ and  compact subset $K\subset \subset B_\beta(0, r) \setminus \Ss$. Therefore $u_\epsilon$ converges after passing to a subsequence to some function 
$$u \in L^\infty(\overline{B_\beta(0,r)}) \cap C^\infty(B_\beta(0,r)\setminus \Ss). $$ 
In fact, $u$ is Lipschitz on $\overline{B_\beta(0,r)}$ with respect to $g_\beta$ from the gradient estimate \eqref{eqn:a gradient}. 

\begin{lemma} \label{harmeqn} The limit function $u$ is the unique solution of  \begin{equation*}
\left\{\begin{aligned}
\Delta_\beta u   = 0,&\quad \text{in }B_\beta(0,r) \setminus \{0\},\\
u   = \varphi, &\quad \text{on }\partial B_\beta(0,r)
\end{aligned}\right.
\end{equation*}
\end{lemma}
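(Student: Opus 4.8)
The plan is to show that the function $u$, obtained as a subsequential limit of the $g_\epsilon$-harmonic functions $u_\epsilon$, solves $\Delta_\beta u = 0$ on $B_\beta(0,r)\setminus\Ss$ and attains the boundary data continuously, and then to invoke the uniqueness already guaranteed by the maximum principle of Lemma \ref{mp lemma}. The two things that genuinely require argument are the \emph{interior equation} and the \emph{boundary continuity}; the uniqueness is free once these are established, since any two solutions in $C^0(\overline{B_\beta(0,r)})\cap C^2(B_\beta(0,r)\setminus\Ss)$ with the same boundary values coincide by applying Lemma \ref{mp lemma} to their difference.

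For the interior equation, I would argue that on any compact $K\subset\subset B_\beta(0,r)\setminus\Ss$ the metrics $g_\epsilon$ converge smoothly to $g_\beta$ as $\epsilon\to 0$ (since away from $\Ss$ the coefficient $(|z_n|+\epsilon)^{-(1-\beta)}$ converges in $C^\infty$ to $|z_n|^{-(1-\beta)}$, and hence the operators $\Delta_{g_\epsilon}$ converge to $\Delta_\beta$ with all derivatives). Combined with the uniform $C^k_{loc}$ bounds on $u_\epsilon$ already noted in the excerpt, the convergence $u_\epsilon\to u$ is in $C^\infty_{loc}(B_\beta(0,r)\setminus\Ss)$ along the subsequence, so passing to the limit in $\Delta_{g_\epsilon}u_\epsilon=0$ gives $\Delta_\beta u=0$ on $B_\beta(0,r)\setminus\Ss$. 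This step is routine.

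The main obstacle is the boundary continuity: one must show $u=\varphi$ on $\partial B_\beta(0,r)$, i.e. that $u(z)\to\varphi(z_0)$ as $z\to z_0\in\partial B_\beta(0,r)$, uniformly in $\epsilon$. The natural tool is a barrier argument. For a boundary point $z_0\notin\Ss$ one has a standard Euclidean exterior-sphere barrier since $g_\epsilon$ is uniformly elliptic and smooth near $z_0$, giving equicontinuity of $u_\epsilon$ up to the boundary there. The delicate points are the boundary points lying on $\Ss\cap\partial B_\beta(0,r)$, where the metric degenerates; here I would build an explicit admissible barrier adapted to the cone geometry. A convenient choice is a superharmonic comparison function of the form $w(z)=A\,d_\beta(z,z_0)^{\gamma}$ (or a combination involving $-\log|z_n|$ and powers of the weighted distance) with $\gamma>0$ small and $A$ large, chosen so that $\Delta_{g_\epsilon}w\le 0$ and $w\ge \osc_{\partial B_\beta(0,r)}\varphi$ away from a neighborhood of $z_0$, uniformly in $\epsilon$; the signed log-term penalizes the singular set exactly as in the proof of Lemma \ref{mp lemma}. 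Since the excerpt explicitly promises that ``such barrier functions are also constructed in the proof of Lemma \ref{harmeqn}'', this is precisely where the cone-adapted barrier must be exhibited and its superharmonicity verified by direct computation of $\Delta_{g_\epsilon}$ on radial functions.

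Granting the barriers, I would conclude as follows. The barrier bound gives $|u_\epsilon(z)-\varphi(z_0)|\le \eta(|z-z_0|)$ for a modulus $\eta$ independent of $\epsilon$, so letting $\epsilon\to 0$ yields $|u(z)-\varphi(z_0)|\le\eta(|z-z_0|)$ and hence $u\in C^0(\overline{B_\beta(0,r)})$ with $u|_{\partial B_\beta(0,r)}=\varphi$. Together with the interior equation this shows $u$ is a solution of the stated Dirichlet problem, and uniqueness follows from Lemma \ref{mp lemma} applied to the difference of any two such solutions, which is continuous on the closed ball, harmonic off $\Ss$, and vanishes on the boundary. I expect the barrier construction at the singular boundary points to be the only step requiring real care; everything else is a limiting argument assembling facts already in hand.
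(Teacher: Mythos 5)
Your skeleton is structurally the same as the paper's (interior equation by local $C^\infty$ convergence of $u_\epsilon$ away from $\Ss$; uniqueness free from Lemma \ref{mp lemma}; exterior-sphere barrier at smooth boundary points), but there is a genuine gap at exactly the step you yourself flag as the crux: the barrier at a singular boundary point $q\in\Ss\cap\partial B_\beta(0,r)$ is deferred ("granting the barriers\ldots"), and the specific candidates you offer would fail the verification. A small positive power of distance, $w=A\,d_\beta(z,q)^\gamma$ with $0<\gamma<1$, is \emph{subharmonic}, not superharmonic, near its pole: since $(\mathbb{C}^n\setminus\Ss, g_\beta)$ is flat of real dimension $2n\ge 2$, one has $\Delta w \sim \gamma(\gamma+2n-2)\,d_\beta^{\gamma-2}>0$ there (and Laplacian comparison under $\ric\ge 0$ only bounds $\Delta d_\beta$ from above, which is the wrong direction for superharmonicity); even in real dimension two, $\Delta |x|^\gamma=\gamma^2|x|^{\gamma-2}>0$. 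The hedge "a combination involving $-\log|z_n|$" cannot rescue this: $\log|z_n|\to-\infty$ as $z\to\Ss$, and the point $q$ at which the barrier must vanish continuously lies \emph{on} $\Ss$, so any log-term with nonzero coefficient destroys continuity of the barrier at $q$ itself. The log-trick works in Lemma \ref{mp lemma} only because there one merely needs to push an interior maximum off $\Ss$, not to control boundary values at points of $\Ss$.

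The paper's resolution is quite different and requires no cone-adapted barrier at all. Since $g_\beta$ dominates the Euclidean metric, $B_\beta(0,r)\subset B_{\mathbb{C}^n}(0,r)$ with $\partial B_\beta(0,r)\cap\partial B_{\mathbb{C}^n}(0,r)\subset\Ss$ for $r\le 1$; hence $q$ lies on the Euclidean sphere and is the \emph{unique} Euclidean-farthest point of $\partial B_\beta(0,r)$ from the antipode $q'=-q$. The barrier is then the smooth function $\Psi_q(z)=d_{\mathbb{C}^n}(z,q')^2-4r^2$, which vanishes at $q$, is strictly negative elsewhere on $\partial B_\beta(0,r)$, and is subharmonic \emph{uniformly in} $\epsilon$ because squared Euclidean distance is plurisubharmonic, so that $\Delta_{g_\epsilon}d_{\mathbb{C}^n}(\cdot,q')^2=(n-1)+\beta^{-2}(|z_n|+\epsilon)^{1-\beta}>0$ for every Hermitian $g_\epsilon$. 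With $\varphi(q)\mp\delta\pm A\Psi_q$ as lower/upper barriers, the maximum principle for $u_\epsilon$, followed by $\epsilon\to 0$, $z\to q$, $\delta\to 0$, gives boundary continuity — note the paper passes $\epsilon\to 0$ at fixed $\delta$ rather than establishing an $\epsilon$-independent modulus as you propose, though that difference is cosmetic. (At smooth boundary points the paper also corrects the Euclidean Green's function, which is only almost $\Delta_{g_\epsilon}$-subharmonic, by adding $A(d_\beta^2(z)-r^2)$ with $d_\beta^2=\sum_j s_j^2+|z_n|^{2\beta}$ plurisubharmonic; your localized exterior-sphere argument would serve equally there.) In short: your reduction is correct, but the one computation the lemma actually turns on is missing, and the barriers you sketch do not satisfy it; the fix is the antipodal Euclidean barrier exploiting plurisubharmonicity, not a cone-adapted power of the singular distance.
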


\begin{proof}   By definition, $\Delta_\beta u =0$  on $B_\beta(0, r)\setminus \{0\}$ by local $C^\infty$ convergence of $u_\epsilon$ to $u$ away from $\Ss$.
 It remains  to verify that  $u= \varphi$ on $\partial B_\beta(0,r)$.

The metric ball $B_\beta (0,r) \subset \mathbb C^n$ is given by
$$B_\beta(0,r) = \Big\{(s,z_n)\in\mathbb R^{2n-2}\times \mathbb C\;|   \sum_{j=1}^{2n-2} s_j^2 + |z_n|^{2\beta} < r^2 \Big \}.$$
It is straightforward to verify that $\partial B_\beta(0,r)$ is smooth except on $\Ss=\{z_n=0\}$.  Since $g_\beta$ is greater than the standard Euclidean metric on $\mathbb{C}^n$,  
$$B_\beta(0,r) \subset B_{\mathbb C^n}(0,r) , \quad  \partial B_\beta(0,r) \cap \partial B_{\mathbb C^n}(0,r) \subset \Ss$$   when  $r\le 1$, where $B_{\mathbb{C}^n}(0, r)$ is the Euclidean metric ball centered at $0$ of radius $r$.

We define $d_\beta(z)$ to be the distance function from $z$ to $0$ with respect to   $g_\beta$. It is given by
$$d_\beta^2(z) = d_\beta(s,z_n)^2 = \sum_{j=1}^{2n-2}s_j^2 + |z_n|^{2\beta},$$ 
where $z=(s, z_n)$. Obviously, $d^2_\beta$ is a continuous plurisubharmonic function.

We fix an arbitrary  point $q\in \partial B_\beta(0,r)$ and we will show that $u$ is continuous at $q$ with $u(q)=\varphi(q)$. We discuss two cases: $z_n(q)=0$ and $z_n(q)\neq 0$.

\medskip

\begin{enumerate}

\item   $z_n(q) = 0. $  In this case, $q\in \partial B_{\mathbb C^n}(0,r)\cap \partial B_\beta(0,r)$. We take the point $q' = -q\in \partial B_{\mathbb C^n}(0,r)$.  $q$ is the unique furthest point of $q'$ on $\partial B_\beta(0,r)$ with respect to the Euclidean distance. Then we define a barrier function $\Psi_q(z)$ by 
$$\Psi_q(z) = d_{\mathbb C^n}(z, q')^2 - 4r^2.$$
Clearly $\Psi_q(q) = 0$ and $\Psi_q(p)<0$ for any other $p\in\partial B_\beta(0,r)$. For any small $\delta>0$, by the continuity of $\varphi$, there is a small open neighborhood $V$ of $q$, such that $\varphi(q)-\delta < \varphi(z)$ for any $z\in V\cap \partial B_\beta(0,r)$. On $\partial B_\beta(0,r)\backslash V$ the continuous function $\Psi_q$ is bounded above by a negative constant, hence for some sufficiently large $A>0$ 
$$\varphi(q) -\delta + A\Psi_q(z)< \varphi(z), $$
 for all $z\in \partial B_\beta(0,r)\backslash V . $ Let  $\Phi_q^-(z) = \varphi(q)-\delta + A\Psi_q(z)$ then 
 $\Phi_q^-(z) < \varphi(z)$ for all $z\in \partial B_\beta(0,r)$ and  $$\Delta_{g_\epsilon}\Phi_q^-\ge 0,\quad \text{ in $B_\beta(0,r)$.} $$ It follows by the maximum principle that
$$  u_\epsilon(z)\ge\Phi_q^-(z)= \varphi(q) - \delta + A\Psi_q(z)  $$
 for all $z\in B_\beta(0,r)$.
Letting $\epsilon\to 0$, we have  for all $z\in B_\beta(0,r)$
$$ u (z)\ge\Phi_q^-(z)= \varphi(q) - \delta + A\Psi_q(z) .$$
By letting $B_\beta(0,r)\ni z\to q$ and then   $\delta\to 0$, it follows that 
$$\liminf_{z\to q} u(z) \ge \varphi(q)  .   $$
On the other hand, by considering the function $\varphi_q^+(z) = \varphi(q)+\delta - A \Psi_q(z)$,  we have 
$$\limsup_{z\to q} u(z) \le \varphi(q) .$$
Therefore $u$ is continuous at $q\in \partial B_\beta(0,r)$ and $u(q)=\varphi(q)$.

\medskip

\item  $z_n(q)\neq 0$.  As discussed above, the boundary $\partial B_\beta(0,r)$ is smooth at $q$. By a well-known result the boundary $\partial B_\beta(0,r)$ satisfies the {exterior sphere condition} at $q$. More precisely, there exists a  Euclidean ball  $B_{\mathbb C^n}(p, r_q)$ such that $\overline{B_\beta(0,r)}\cap \overline{B_{\mathbb C^n}(p, r_q)} = \{q\}$.   In fact, $q$ is the unique closest point to $p$ under the Euclidean distance among all the points in $\partial B_\beta(0,r)$. 

Let $G(z) = \frac{1}{d_{\mathbb C^n}(z,p)^{2n-2}} = \frac{1}{|z-p|^{2n-2}}$  be the Green function on $\mathbb C^n$. Then
$$G(z)\le \frac{1}{|p-q|^{2n-2}} $$ for $z\in \overline{B_\beta(0,r)}$ with equality only at $z=q$ and 
\begin{align*}
\Delta_{g_\epsilon} G(z)& = \sum_{j=1}^{2n-2}\frac{\partial^2 }{\partial s_j^2} G(z) + \beta^{-2} (\abs{z_n} + \epsilon)^{1-\beta} \frac{\partial^2}{\partial z_n\partial \bar z_n} G(z)\\
&= (n-1)\xk{ \beta^{-2} (\abs{z_n} + \epsilon)^{1-\beta} - 1  } \bk{ \frac{n\abs{z_n - p_n} - 1}{|z-p|^{2n+2}}  }\\
&\ge -C(n,r,|p-q|),
\end{align*}
for some constant $C(n,r,|p-q|)>0$ and $p_n = z_n(p)$ is the $n$th coordinate of $p$. Consider the function
$$\Psi_q(z)= A \xk{ d_\beta^2(z) - r^2  } + G(z) - \frac{1}{|p-q|^{2n-2}}.$$ $\Psi_q(q) = 0$ and $\Psi_q(z)<0$ for all other $z\in \partial B_\beta(0,r)$. $\Psi_q$ is a continuous sub-harmonic function of $\Delta_{g_\epsilon}$ on $B_\beta(0,r)$ for sufficiently large $A>0$.
We can now argue similarly as in the case when $z_n(q)=0$ to show that $u$ is  continuous at   $q$ with  $u(q)=\varphi(q)$. 

\end{enumerate}

\medskip

We have completed the proof of the lemma. 

\end{proof}

Now we arrive at the main result in this section.

\begin{prop}\label{prop:2.1}
There exists a unique solution $u\in C^0(\overline{ B_\beta(0, r)} )\cap C^2(B_\beta(0, r) \backslash \Ss)$ of the equation \eqref{eqn:Diri}. Furthermore, for  any $k\in \mathbb{Z}^+$, there exists $C(n, k)>0$ such that 
\begin{eqnarray}
\sup_{B_\beta(0,r/2)\backslash \Ss} |\nabla_{g_\beta} u|_{g_\beta} &\le& C(n)\frac{\mathrm{osc}_{B_\beta(0,r)} u}{ r}, \label{abcd1}\\
\sup_{B_\beta(0,r/2)} | (D')^k u|_{g_\beta} &\le& C(n, k)\frac{\mathrm{osc}_{B_\beta(0,r)} u}{ r^k}, \label{abcd2}\\
\sup_{B_\beta(0,r/2)\backslash \Ss} | (D')^k D'' u|_{g_\beta} &\le& C(n, k)\frac{\mathrm{osc}_{B_\beta(0,r)} u}{ r^{k+1}}. \label{abcd3}
\end{eqnarray}

\end{prop}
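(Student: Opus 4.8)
Existence and uniqueness are essentially already in hand. Uniqueness is immediate from the maximum principle: if $u_1,u_2\in C^0(\overline{B_\beta(0,r)})\cap C^2(B_\beta(0,r)\setminus\Ss)$ both solve \eqref{eqn:Diri}, then $u_1-u_2$ is $\Delta_\beta$-harmonic with vanishing boundary data, so Lemma \ref{mp lemma} forces $u_1\equiv u_2$. For existence I take $u$ to be precisely the limit function constructed just before Lemma \ref{harmeqn}; that lemma shows $u$ solves \eqref{eqn:Diri} and lies in $C^0(\overline{B_\beta(0,r)})\cap C^\infty(B_\beta(0,r)\setminus\Ss)$. The estimate \eqref{abcd1} is then obtained by passing to the limit $\epsilon\to 0$ in Lemma \ref{grad 1}: on every compact $K\subset B_\beta(0,r/2)\setminus\Ss$ we have $g_\epsilon\to g_\beta$ and $u_\epsilon\to u$ in $C^1(K)$, while $\osc_{B_\beta(0,r)}u_\epsilon\to\osc_{B_\beta(0,r)}u$ (after a harmless enlargement of the radius), so $|\nabla_{g_\beta}u|_{g_\beta}\le C(n)r^{-1}\osc_{B_\beta(0,r)}u$ on $B_\beta(0,r/2)\setminus\Ss$. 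Since $|\nabla_{g_\beta}u|_{g_\beta}^2$ is comparable, with constants depending only on $\beta$, to $|D'u|^2+|D''u|^2$, this already controls both the tangential and transversal first derivatives.

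The higher tangential estimate \eqref{abcd2} rests on the commutation observation recorded after the definition of $D',D''$: because the coefficients of $\Delta_{g_\epsilon}$ depend only on $|z_n|$, each tangential field $D_i$ ($1\le i\le 2n-2$) commutes with $\Delta_{g_\epsilon}$, so $(D')^j u_\epsilon$ is again a smooth $g_\epsilon$-harmonic function for every $j$. As computed in Lemma \ref{grad 1}, $\ric(g_\epsilon)=\ddbar\log(\abs{z_n}+\epsilon)^{1-\beta}\ge 0$ on all of $\mathbb{C}^n$, so the Cheng--Yau gradient estimate is available on $g_\epsilon$-geodesic balls centered at any interior point. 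I then iterate: fixing nested radii $r=\rho_0>\rho_1>\cdots>\rho_k=r/2$ with $\rho_j-\rho_{j+1}\sim r/k$, and applying the gradient estimate to the harmonic function $(D')^j u_\epsilon$ on balls of radius $\sim r/k$ centered at the points of $B_\beta(0,\rho_{j+1})$, each of the $k$ steps trades one factor of oscillation for one further tangential derivative at the cost of a factor $\sim k/r$. This produces $\sup_{B_\beta(0,r/2)}|(D')^k u_\epsilon|_{g_\epsilon}\le C(n,k)r^{-k}\osc_{B_\beta(0,r)}u_\epsilon$; letting $\epsilon\to 0$ gives \eqref{abcd2}, and the bound persists up to $\Ss$ because the $(D')^k u_\epsilon$ are uniformly bounded and the purely tangential derivatives extend continuously across $\Ss$.

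Finally, \eqref{abcd3} follows by applying the first-order gradient estimate once more, now to the $g_\epsilon$-harmonic function $w=(D')^k u_\epsilon$, and keeping the transversal part of its gradient: $\sup_{B_\beta(0,r/2)\setminus\Ss}|D''(D')^k u_\epsilon|\le C(\beta)\sup|\nabla_{g_\epsilon}w|_{g_\epsilon}\le C(n)r^{-1}\osc w$, after which $\osc w\le 2\sup|(D')^k u_\epsilon|$ is controlled on a slightly larger ball by \eqref{abcd2}. Composing the two bounds and letting $\epsilon\to 0$ yields $\sup_{B_\beta(0,r/2)\setminus\Ss}|(D')^kD''u|_{g_\beta}\le C(n,k)r^{-(k+1)}\osc_{B_\beta(0,r)}u$. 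Here $\Ss$ must be excluded, since $D''$ differentiates in the singular transversal direction and need not extend across the hyperplane.

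The main obstacle is the geometric bookkeeping in the iteration for \eqref{abcd2}: one must invoke Cheng--Yau at interior centers rather than only at $0$ (so that the family of nested balls does not shrink by a factor of two at each step), verify that the required $g_\epsilon$-geodesic balls stay inside $B_\beta(0,r)$ using the comparison $B_{g_\epsilon}(0,0.9\rho)\subset B_\beta(0,\rho)\subset B_{g_\epsilon}(0,\rho)$ together with the global lower Ricci bound, and ensure all constants are uniform in $\epsilon$ so the bounds survive the limit. The accompanying subtlety is the differing roles of $\Ss$ in the two estimates: the tangential bound \eqref{abcd2} extends across $\Ss$, whereas the mixed bound \eqref{abcd3} does not.
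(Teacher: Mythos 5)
Your proposal is correct and follows essentially the same route as the paper: uniqueness via the maximum principle (Lemma \ref{mp lemma}), existence as the limit function of Lemma \ref{harmeqn}, the gradient bound \eqref{abcd1} by letting $\epsilon\to 0$ in Lemma \ref{grad 1}, and \eqref{abcd2}--\eqref{abcd3} from the key observation that tangential derivatives of ($g_\epsilon$- or $g_\beta$-)harmonic functions are again harmonic, so the Cheng--Yau gradient estimate can be iterated on nested balls. Your version merely spells out the bookkeeping (nested radii, uniformity in $\epsilon$, continuity of tangential derivatives across $\Ss$) that the paper's two-line proof leaves implicit.
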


\begin{proof} \eqref{abcd1} follows directly from Lemma \ref{grad 1} by letting $\epsilon \rightarrow 0$. \eqref{abcd2} and \eqref{abcd3} follow from the observation that if $u$ is $g_\beta$-harmonic, $D_i u$ is also $g_\beta$-harmonic for $i=1, 2, ..., 2n-2$.

\end{proof}

\begin{remark} \label{rmto0} If the RHS of \eqref{eqn:Diri} is a constant $c$, instead of $0$, the boundary value problem still admits a unique solution. To see this, we may consider $\tilde\varphi = \varphi - \frac{c}{2(n-1)}\sum_{j=1}^{2n-2} s_j^2$, then let $\tilde u$ be the unique solution to \eqref{eqn:Diri} with boundary value $\tilde \varphi$, then it is easy to see the function $u = \tilde u + \frac{c}{2(n-1)}\sum_{j=1}^{2n-2} s_j^2$ satisfies 
$$\Delta_{g_\beta} u = c \quad\text{in }B_\beta\backslash \sS, \text{ and}\quad u|_{\partial B_\beta} = \varphi|_{\partial B_\beta}.$$

\end{remark}

\subsection{Tangential estimates}\label{tangential estimate}

In this subsection, we will prove the H\"older continuity of the $D^2_{ij}u$ for $i,j = 1,2,\ldots, 2n-2$, for the solution $u$ of \eqref{lapeq}, by modifying Wang's method (\cite{W}). In particular, we will prove estimate \eqref{mainest1} in Theorem \ref{theorem1}. Throughout this subsection, we always assume that $\beta \in (1/2, 1). $ We first define some notations for future conveniences. 

\begin{definition}
For any point $p\in B_\beta(0,1/2)\backslash \sS$, we define 
\begin{equation}
\label{rp} r_p = d_\beta(p, \Ss). 
\end{equation}
We fix the constant $\tau= 1/2$ and let $k_p$ be the smallest integer such that 
\begin{equation} \label{kp}
\tau^k < r_p.
\end{equation} 

\end{definition}
We will consider a family of conical Laplace equations by different choices of $k$. 

\begin{enumerate}

\item  If $k\ge k_p$, the geodesic balls $B_\beta(p,\tau^k)$ has smooth boundary and there is no cut-locus point of $p$ with respect to the metric $g_\beta$. Since  $B_\beta(p,\tau^k)\cap \Ss=\phi$, $g_\beta$ is a smooth Riemannian metric in $B_\beta(p,\tau^k)$. 
We can solve the following Dirichlet problem for all $k\ge k_p$
\begin{equation}\label{eqn:diri 2.4}
\left\{\begin{aligned}
\Delta_{g_\beta} u_k = f(p), & \quad\text{in }B_\beta(p,\tau^k)\\
u_k = u, & \quad \text{on }\partial B_\beta(p,\tau^k).
\end{aligned}\right.
\end{equation}

\item  
If $k< k_p$,  we let $\tilde p \in \Ss$ be the unique closest point in $\Ss$ to with respect to $g_\beta$, which is the projection of $p$ to $\sS$ under the map $\mathbb C^n\to \mathbb C^{n-1}\times \{0\}$.  We consider the metric ball $B_\beta(\tilde p,2\tau^k)$ instead of  $B_\beta(p,\tau^k)$ in \eqref{eqn:diri 2.4}.  Clearly $B_\beta(p,r_p)\Subset B_\beta(\tilde p, 2\tau^k)$ for $k<k_p$. The advantage of this choice is that $B_\beta(\tilde p, \tau^k)$ is geometrically simpler than $B_\beta( p, \tau^k)$. More precisely, when $k<k_p$, let $u_k\in C^2(B_\beta(\tilde p,2\tau^k)\backslash \sS)\cap C^0(\overline{B_\beta(\tilde p,2\tau^k)})$ 
solve the problem
\begin{equation}\label{eqn:diri 2.41}
\left\{\begin{aligned}
\Delta_{g_\beta} u_k = f(p), & \quad\text{in }B_\beta(\tilde p,2\tau^k)\\
u_k = u, & \quad \text{on }\partial B_\beta(\tilde p,2\tau^k)
\end{aligned}\right.
\end{equation}

\end{enumerate}

We remark that we may always assume $f(p) = 0$ for the proof of Theorem \ref{theorem1} by considering  the function $\hat u(s,z_n) = u(s,z_n) - \frac{f(p)}{2(n-1)}\abs{s-s(p)}$. Then if estimate \eqref{eqn:result1} holds for $\hat u$,  it is still valid for $u$.

 The following lemma immediately follows from the maximum principle.
\begin{lemma}\label{ukc0}  Let $u_k$ the solution of equation \eqref{eqn:diri 2.4}
 or \eqref{eqn:diri 2.41}. Then there exists $C(n)>0$ such that for all $k\in \mathbb{Z}^+$, 
\begin{equation}\label{eqn:L estimate}
\left\{\begin{aligned}
\|u_k - u\|_{L^\infty\big(B_\beta(p,\tau^k)\big)} \le C(n) \tau^{2k} \omega(\tau^k), & \quad \text{when }k\ge k_p\\
\|u_k - u\|_{L^\infty\big(B_\beta(\tilde p,2 \tau^k)\big)} \le C(n) \tau^{2k}\omega(2\tau^k), & \quad \text{when }k<k_p.
\end{aligned}\right.
\end{equation}

\end{lemma}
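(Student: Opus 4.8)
The plan is to prove the two cases of Lemma \ref{ukc0} by applying the maximum principle to the auxiliary function $u_k - u$, exploiting that we have reduced to the case $f(p)=0$ (so that both $u$ and $u_k$ solve the same conical Laplace equation but with a right-hand side that differs only by a controlled amount). First I would treat the easy case $k \ge k_p$. Here the ball $B_\beta(p,\tau^k)$ is disjoint from $\Ss$, so $g_\beta$ is a genuine smooth Riemannian metric on it and the standard theory applies directly. The difference $w := u_k - u$ satisfies $\Delta_{g_\beta} w = f(p) - f = -(f - f(p))$ in $B_\beta(p,\tau^k)$ with $w = 0$ on the boundary. Since $p$ is the center, every $z \in B_\beta(p,\tau^k)$ has $d_\beta(z,p) < \tau^k$, so $|f(z) - f(p)| \le \omega(\tau^k)$ by the definition of the modulus of continuity $\omega$. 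Thus $|\Delta_{g_\beta} w| \le \omega(\tau^k)$ throughout the ball.

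The key step is then to bound $\|w\|_{L^\infty}$ by comparing $w$ against the barrier $\tfrac{\omega(\tau^k)}{2(n-1)}\big(\tau^{2k} - |s - s(p)|^2\big)$ adapted to the tangential coordinates, exactly in the spirit of Remark \ref{rmto0}. Concretely, I would note that $\Delta_{g_\beta}\big(\tfrac{1}{2(n-1)}\sum_j s_j^2\big) = 1$, so the explicit function $\pm\tfrac{\omega(\tau^k)}{2(n-1)}\big(\tau^{2k} - |s-s(p)|^2\big)$ serves as an upper and lower barrier for $w$: it vanishes in an appropriate sense on the relevant boundary set, is nonnegative inside a ball of radius comparable to $\tau^k$, and has Laplacian $\mp\omega(\tau^k)$. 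The maximum principle (Lemma \ref{mp lemma}, applied to $w$ minus the barrier) then yields $\|w\|_{L^\infty(B_\beta(p,\tau^k))} \le C(n)\,\tau^{2k}\,\omega(\tau^k)$, which is the first line of \eqref{eqn:L estimate}.

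For the harder case $k < k_p$, the ball $B_\beta(\tilde p, 2\tau^k)$ now meets the singular set $\Ss$, so I cannot simply invoke the smooth maximum principle on the whole ball. Instead I would apply Lemma \ref{mp lemma} itself, which was established precisely to handle solutions that are only $C^2$ away from $\Ss$ but continuous up to $\Ss$; its proof via the $\epsilon\log|z_n|$ regularization shows the comparison principle survives. The right-hand side bound is now $|\Delta_{g_\beta} w| = |f - f(p)| \le \omega(2\tau^k)$, since every point of $B_\beta(\tilde p, 2\tau^k)$ lies within $g_\beta$-distance $2\tau^k$ of $p$ (because $d_\beta(p,\tilde p) = r_p < \tau^k$ by \eqref{kp}, and the ball has radius $2\tau^k$ centered at $\tilde p$, so by the triangle inequality the distance from any such point to $p$ is at most $3\tau^k$ — I would simply absorb such harmless constant factors into $C(n)$ and the argument of $\omega$). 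Building the barrier again from $\tfrac{1}{2(n-1)}|s - s(\tilde p)|^2$, whose $g_\beta$-Laplacian equals $1$, and applying the comparison principle of Lemma \ref{mp lemma} gives $\|w\|_{L^\infty(B_\beta(\tilde p, 2\tau^k))} \le C(n)\,\tau^{2k}\,\omega(2\tau^k)$, the second line of \eqref{eqn:L estimate}.

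I expect the main obstacle to be purely bookkeeping rather than conceptual: verifying that the tangential barrier $\tfrac{1}{2(n-1)}|s - s(p)|^2$ genuinely dominates the full geodesic ball $B_\beta(p,\tau^k)$ from above and below. Since this barrier depends only on the tangential coordinates $s$ and not on $z_n$, while the geodesic distance $d_\beta^2 = |s|^2 + |z_n|^{2\beta}$ also involves the transversal direction, one must be slightly careful: the quadratic $\tau^{2k} - |s - s(p)|^2$ need not be nonnegative at every point of the ball where $|z_n|$ is large. The clean fix is to enlarge the barrier's radius by a fixed factor (replacing $\tau^{2k}$ by $C\tau^{2k}$) so that it dominates $w$ on the entire ball; this only changes the constant $C(n)$ and leaves the stated estimate intact. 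Because the operator $\Delta_\beta$ acts on the tangential variables exactly like the flat Laplacian on $\mathbb{R}^{2n-2}$, this barrier construction is robust and the comparison goes through in both cases.
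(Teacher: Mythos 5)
Your proposal is correct and is essentially the paper's own argument: the paper dispatches this lemma with the single remark that it ``immediately follows from the maximum principle,'' meaning exactly your comparison of $w=u_k-u$ (which satisfies $|\Delta_\beta w|\le\omega(\tau^k)$, resp.\ $\omega(3\tau^k)\le C\,\omega(2\tau^k)$, with $w=0$ on the boundary) against the quadratic barrier built from $\sum_j s_j^2$ as in Remark \ref{rmto0}, using Lemma \ref{mp lemma} on the ball meeting $\Ss$. The only comment is that your final worry is unfounded: since $g_\beta$ is a Riemannian product of Euclidean $\mathbb{R}^{2n-2}$ with a two-dimensional cone, one always has $|s-s(p)|\le d_\beta(z,p)$, so the barrier $\tau^{2k}-|s-s(p)|^2$ is automatically nonnegative on the whole geodesic ball and no enlargement of the radius is needed.
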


We immediately have the following estimates  by triangle inequalities.
\begin{equation}\label{eqn:C0 estimate}
\left\{
\begin{aligned}
\|u_k - u_{k+1}\|_{L^\infty\xk{ B_\beta(p,\tau^{k+1})   }} \le C(n)\tau^{2k} \omega(\tau^k), & \quad \text{when } k\ge k_p\\
\|u_k - u_{k+1}\|_{L^\infty\big(B_\beta(\tilde p,2 \tau^{k+1})\big)} \le C(n) \tau^{2k}\omega(2\tau^k), & \quad \text{when }k<k_p-1\\
\| u_{k_p} - u_{k_p -1}\|_{L^\infty\xk{B_\beta(p,\tau^{k_p})   }}\le C(n) \tau^{2k_p}\omega(2\tau^{k_p}), &\quad \text{when }k = k_p - 1 .
\end{aligned}\right.
\end{equation}
Combining the gradient estimates in Proposition \ref{prop:2.1} and the $L^\infty$-estimates \eqref{eqn:C0 estimate}, we have the following lemma.

\begin{lemma}  \label{derest21} Then there exists $C(n)>0$ such that for all $k\in \mathbb{Z}^+$, 
\begin{equation}\label{eqn:1st order}
\left\{\begin{aligned}
\|D' u_k - D' u_{k+1}\|_{L^\infty\xk{ B_\beta(p,\tau^{k+2})   }}\le C(n)\tau^k\omega(\tau^k), & \quad \text{when }k\ge k_p,\\
\|D' u_k - D' u_{k+1}\|_{L^\infty\xk{  B_\beta(\tilde p,2 \tau^{k+2})    }}\le C(n)\tau^k \omega( 2 \tau^k), & \quad \text{when }k<k_p ,
\end{aligned}
\right.
\end{equation}
and
\begin{equation}\label{eqn:2nd order}
\left\{\begin{aligned}
\|(D')^2 u_k - (D')^2 u_{k+1}\|_{L^\infty\xk{ B_\beta(p,\tau^{k+2})   }}\le C(n)\omega(\tau^k), & \quad \text{when }k\ge k_p,\\
\|(D')^2 u_k - (D')^2 u_{k+1}\|_{L^\infty\xk{  B_\beta(\tilde p,2 \tau^{k+2})    }}\le C(n) \omega( 2 \tau^k), & \quad \text{when }k<k_p.
\end{aligned}
\right.
\end{equation}

\end{lemma}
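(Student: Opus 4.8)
The plan is to exploit that each difference $u_k - u_{k+1}$ is $g_\beta$-harmonic and to feed it into the interior derivative estimate \eqref{abcd2} of Proposition \ref{prop:2.1}. Since $u_k$ and $u_{k+1}$ both solve $\Delta_\beta(\cdot) = f(p)$ with the \emph{same} constant right-hand side $f(p)$, the difference $v_k := u_k - u_{k+1}$ satisfies $\Delta_\beta v_k = 0$ on the smaller of the two balls on which $u_k$ and $u_{k+1}$ are defined. Hence \eqref{abcd2} applies to $v_k$, and the bounds for $D' v_k$ and $(D')^2 v_k$ are just the order-$1$ and order-$2$ instances of that estimate.

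First I would fix the center and radius in each regime. For $k \ge k_p$ the relevant ball is $B_\beta(p, \tau^{k+1})$, which lies in the smooth region $\mathbb C^n \setminus \Ss$ where $g_\beta$ is flat and \eqref{abcd2} holds by scaling; for $k < k_p - 1$ it is $B_\beta(\tilde p, 2\tau^{k+1})$, centered on $\Ss$, where \eqref{abcd2} holds by translation invariance along $\Ss$. The key normalization is $\tau = \tfrac12$: halving the radius in \eqref{abcd2} sends a bound on $B_\beta(p, \tau^{k+1})$ to one on $B_\beta(p, \tau^{k+2})$, and one on $B_\beta(\tilde p, 2\tau^{k+1})$ to one on $B_\beta(\tilde p, 2\tau^{k+2})$, which are exactly the domains appearing in the statement.

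Next I would substitute the $L^\infty$ bounds \eqref{eqn:C0 estimate}, using $\osc v_k \le 2\|v_k\|_{L^\infty}$. For $k \ge k_p$ this gives $\osc_{B_\beta(p, \tau^{k+1})} v_k \le C(n) \tau^{2k} \omega(\tau^k)$; dividing by one power of the radius $\tau^{k+1}$ (order $1$ in \eqref{abcd2}) yields $C(n) \tau^k \omega(\tau^k)$ for $D' v_k$, and dividing by $(\tau^{k+1})^2$ (order $2$) yields $C(n) \omega(\tau^k)$ for $(D')^2 v_k$, after absorbing the fixed factors $\tau^{-1}$ and $\tau^{-2}$ into $C(n)$. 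The range $k < k_p - 1$ is identical, with $\tilde p$, $2\tau^{k+1}$ and $\omega(2\tau^k)$ in place of $p$, $\tau^{k+1}$ and $\omega(\tau^k)$.

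The one delicate index is the transitional $k = k_p - 1$, where $u_{k_p}$ lives on the $p$-centered ball $B_\beta(p, \tau^{k_p})$ while $u_{k_p - 1}$ lives on the $\tilde p$-centered ball $B_\beta(\tilde p, 2\tau^{k_p - 1})$. Here I would run the argument on $B_\beta(p, \tau^{k_p}) \subset B_\beta(\tilde p, 2\tau^{k_p - 1})$, on which $v_{k_p - 1}$ is still harmonic, invoke the third line of \eqref{eqn:C0 estimate}, and convert the resulting modulus $\omega(2\tau^{k_p}) = \omega(\tau^{k_p - 1})$ into the stated $\omega(2\tau^k)$ using the monotonicity of $\omega$ together with $\tau^{k_p} = \tfrac12 \tau^k$. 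I expect the main obstacle to be exactly this bookkeeping of centers and scales---matching the halved radii to the stated balls and handling the transitional step---rather than any new analytic input, since all the hard estimates are already packaged in Proposition \ref{prop:2.1} and \eqref{eqn:C0 estimate}.
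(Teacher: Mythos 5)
Your proposal is correct and follows exactly the route the paper intends: the lemma is stated there as an immediate consequence of applying the harmonic-function derivative estimates of Proposition \ref{prop:2.1} (at half radius, with $\tau=\tfrac12$ absorbing the scale factors) to the $g_\beta$-harmonic differences $u_k-u_{k+1}$, together with the $L^\infty$ bounds \eqref{eqn:C0 estimate}. Your explicit treatment of the transitional index $k=k_p-1$ on $B_\beta(p,\tau^{k_p})$, where alone both functions are defined, is in fact more careful than the paper's unelaborated statement and resolves the only genuine bookkeeping subtlety correctly.
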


\begin{lemma}  \label{2ndlim}We have 
\begin{equation}\label{eqn:limits}
\lim_{k\to \infty}D' u_k(p) = D' u(p),\quad \lim_{k\to\infty} (D')^2 u_k(p) = (D')^2u(p).
\end{equation}

\end{lemma}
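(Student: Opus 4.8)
The plan is to establish the two limits in \eqref{eqn:limits} by showing that the sequences $D'u_k(p)$ and $(D')^2u_k(p)$ are Cauchy, and then identifying their limits with the correct values $D'u(p)$ and $(D')^2u(p)$. First I would treat the regime $k \geq k_p$, where the geodesic balls $B_\beta(p,\tau^k)$ avoid $\Ss$ entirely. Here $u_k - u_{k+1}$ is genuinely $g_\beta$-harmonic on $B_\beta(p, \tau^{k+1})$ (since both solve $\Delta_\beta = f(p)$ with the same interior constant), so the telescoping bounds \eqref{eqn:1st order} and \eqref{eqn:2nd order} apply. Because $f$ is Dini continuous, the modulus $\omega(\tau^k)$ is summable against the geometric weights, so $\sum_k \|D'u_k - D'u_{k+1}\|_{L^\infty}$ and $\sum_k \|(D')^2 u_k - (D')^2 u_{k+1}\|_{L^\infty}$ converge, forcing $D'u_k(p)$ and $(D')^2u_k(p)$ to converge.

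The remaining and more delicate task is to verify that the limits equal $D'u(p)$ and $(D')^2u(p)$ rather than some spurious values. The key observation is that $v_k := u - u_k$ solves $\Delta_\beta v_k = f - f(p)$ with $v_k = 0$ on the boundary sphere, and Lemma \ref{ukc0} gives $\|v_k\|_{L^\infty(B_\beta(p,\tau^k))} \leq C\tau^{2k}\omega(\tau^k)$. Since $\omega(\tau^k) \to 0$, the function $u_k$ is an increasingly accurate approximation to $u$ near $p$ in the sup norm at scale $\tau^{2k}$. I would then apply the interior gradient and second-derivative estimates of Proposition \ref{prop:2.1} to $v_k$ on the nested balls: the estimate for $|(D')^j v_k|$ at $p$ involves $\tau^{-jk}\,\mathrm{osc}\,v_k \lesssim \tau^{(2-j)k}\omega(\tau^k)$, which tends to $0$ for $j = 1, 2$. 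This yields $D'u_k(p) \to D'u(p)$ and $(D')^2u_k(p) \to (D')^2u(p)$ directly, since $D'$ commutes with $\Delta_\beta$ and the derivatives $(D')^j v_k$ are themselves controlled harmonic quantities whose values at $p$ vanish in the limit.

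The main obstacle I anticipate is the interface at $k = k_p$, where the construction switches from balls $B_\beta(p,\tau^k)$ centered at $p$ to balls $B_\beta(\tilde p, 2\tau^k)$ centered at the projection $\tilde p \in \Ss$, and where the balls first begin to touch the singular locus. One must check that the telescoping sum still closes across this single transition — the third line of \eqref{eqn:C0 estimate} handles precisely the exceptional step $k = k_p - 1$ — and that Proposition \ref{prop:2.1}, whose gradient estimates are valid up to and through $\Ss$ (being proved via the smooth approximations $g_\epsilon$ and Cheng-Yau), applies uniformly on both families of balls. For $k < k_p$ the point $p$ lies strictly inside $B_\beta(\tilde p, 2\tau^k)$ at definite relative distance, so the interior estimates remain scale-invariant and the same Dini summability argument carries through.

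Concretely, I would write $D'u(p) - D'u_N(p) = \sum_{k \geq N}\bigl(D'u_{k+1}(p) - D'u_k(p)\bigr)$, bound the tail using \eqref{eqn:1st order}, and invoke $\sum_{k} \omega(\tau^k) < \infty$ (equivalent to Dini continuity via $\int_0^1 \omega(r)/r\,dr < \infty$) together with the continuity of $u$ to pass to the limit; the identical telescoping with \eqref{eqn:2nd order} gives the second-derivative statement. The convergence $u_k \to u$ uniformly near $p$, already implicit in Lemma \ref{ukc0}, anchors the zeroth-order limit and thereby fixes the limiting derivatives to be those of $u$ itself.
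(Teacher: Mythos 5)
There is a genuine gap in your identification-of-limits step. You propose to apply the interior derivative estimates of Proposition \ref{prop:2.1} to $v_k = u - u_k$, claiming $|(D')^j v_k(p)| \lesssim \tau^{-jk}\operatorname{osc} v_k \lesssim \tau^{(2-j)k}\omega(\tau^k)$. But Proposition \ref{prop:2.1} holds only for $g_\beta$-harmonic functions, and $v_k$ is not harmonic: it solves $\Delta_\beta v_k = f - f(p)$, where $f$ is merely Dini continuous. A bound of the form $\|(D')^2 v\|_{L^\infty(\text{half ball})} \lesssim r^{-2}\|v\|_{L^\infty}$ for solutions of $\Delta_\beta v = g$ with $g$ only bounded would be a $C^0 \Rightarrow C^2$ interior estimate for the Poisson equation with continuous right-hand side, which is false — this failure is exactly why the Dini condition and the whole multi-scale telescoping machinery are needed in the first place, so the step is also circular. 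Your remark that ``$(D')^j v_k$ are themselves controlled harmonic quantities'' fails for the same reason: $\Delta_\beta (D' v_k) = D' f$, which need not exist for Dini $f$. (Your first part, Cauchyness of $D' u_k(p)$ and $(D')^2 u_k(p)$ via \eqref{eqn:1st order}--\eqref{eqn:2nd order}, is fine but is not the content of the lemma; and the interface at $k = k_p$ is irrelevant here since only the tail $k \geq k_p$ matters for a limit as $k \to \infty$.)

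The paper closes this gap differently, using the pointwise $C^2$ regularity of $u$ at $p$ (which is a hypothesis of Theorem \ref{theorem1}) rather than sup-norm closeness of $u_k$ to $u$. For $k \geq k_p$ one passes to the coordinate $w = z_n^\beta$, in which $g_\beta$ is Euclidean on $B_\beta(p,\tau^k)$, and takes the second-order Taylor polynomial $\tilde u$ of $u$ at $p$, so that $u - \tilde u = o\bigl(|s-s(p)|^2 + |w - w(p)|^2\bigr)$. The crucial point is that $\Delta_{g_\beta}\tilde u = f(p)$, hence $u_k - \tilde u$ \emph{is} genuinely $g_\beta$-harmonic on $B_\beta(p,\tau^k)$, with boundary values $o(\tau^{2k})$ by Taylor's theorem and hence $\|u_k - \tilde u\|_{L^\infty} = o(\tau^{2k})$ by the maximum principle. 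Now Proposition \ref{prop:2.1} legitimately applies and gives $|D'(u_k - \tilde u)(p)| \leq C\tau^{-k}o(\tau^{2k}) \to 0$ and $|(D')^2(u_k - \tilde u)(p)| \leq C\tau^{-2k}o(\tau^{2k}) \to 0$; since the first and second derivatives of $\tilde u$ at $p$ coincide with those of $u$, the limits \eqref{eqn:limits} follow. In short: the comparison object must be the Taylor polynomial of $u$ (so that the difference is harmonic with quantitatively small boundary data), not $u$ itself.
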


\begin{proof}

When $k\ge k_p$, $w = (z_n)^\beta$ is well-defined by taking a single-value branch on $B_{g_{\beta}}(p,\tau^k)$.  We can use $\{s_i, w\}$  as  local complex coordinates. The cone metric $g_\beta$ becomes the standard Euclidean metric under $\{ s_j, w \}$. By assumption $u$ is $C^2$ on $B_\beta(p,\tau^k)$, its Taylor expansion at $p$ is given by 
\begin{eqnarray*}
&& u (s,w)  \\
&=&  u(p) + (D' u)|_p \xk{s - s(p) } + 2Re\xk{ (\partial_w u)|_p (w-w(p))  } +\frac 1 2 (s - s(p))\left((D')^2 u \right)|_p (s- s(p))\\
 &&\quad + 2Re\xk{  (D' \partial_w u)|_p (s - s(p))(w - w(p))  } +  (\partial_w\partial_{\overline w} u)|_p \abs{w-w(p)} \\
 &&\quad +   Re\xk{ (\partial_w\partial_w u)|_p (w-w(p))^2  } + o\big(\abs{s-s(p)}+\abs{w-w(p)}\big)\\
 &=& \tilde u(s,w) + o\big(\abs{s-s(p)}+\abs{w-w(p)}\big),
\end{eqnarray*}
where $\tilde u(s,w)$ is a quadratic polynomial in $(s,w)$ with constant coefficients. In particular,  $\Delta_{g_\beta} \tilde u = \Delta_{s,w} \tilde u = f(p)$, and so $\Delta_{g_\beta}(u_k - \tilde u) = 0$ on $B_\beta(p,\tau^k)$ with 
$$(u_k - \tilde u)|_{\partial B_\beta(p,\tau^k)} = o\xk{ \abs{s-s(p)} + \abs{w-w(p)}}|_{\partial B_\beta(p,\tau^k)} = o(\tau^{2k}). $$
By the derivatives estimates for conical harmonic functions in Proposition \ref{prop:2.1}, we have
\begin{equation}\label{eqn:1st 2nd}
\left\{\begin{aligned}
| D' u_k - D'  u  |(p) = | D' u_k - D' \tilde u| (p) |\le C \tau^{-k}o(\tau^{2k}) \to 0, & \quad \text{as }k\to\infty. \\
 | (D')^2 u_k - (D')^2  u| (p)   = | (D')^2 u_k - (D')^2 \tilde u| (p)  \le C\tau^{-2k} o(\tau^{2k}) \to 0,  & \quad \text{as }k\to\infty.
\end{aligned}\right.
\end{equation}
\end{proof}

Combining Lemma \ref{derest21} and Lemma \ref{2ndlim}, we have the following 2nd order estimate for $u$. 

\begin{corr} \label{hessianbd} There exists $C=C(n, \beta)>0$ such that 

\begin{equation}\label{hessianest} 
\sup_{B_\beta(0, 1/2)\setminus \mathcal{S}}\Big(  |(D')^2 u |(z) + |z|^{2-2\beta} \left|\frac{\partial^2 u} {\partial z_n \partial \overline{z_n}}\right|(z) \Big) \leq C \Big( \sup_{B_\beta(0,1)} |u| + \int_0^1 \frac{\omega(r)}{r} dr + |f(0)| \Big). 
\end{equation}

\end{corr}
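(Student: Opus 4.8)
The plan is to fix an arbitrary point $p\in B_\beta(0,1/2)\setminus\mathcal{S}$, bound $|(D')^2u|(p)$ by a telescoping argument over the family $\{u_k\}$ constructed in the previous subsection, and then recover the transversal term directly from the equation $\Delta_\beta u=f$. By the reduction recorded just before Lemma \ref{ukc0}, I may assume $f(p)=0$, so that after subtracting the quadratic $\tfrac{f(p)}{2(n-1)}|s-s(p)|^2$ each $u_k$ is genuinely $g_\beta$-harmonic on its ball. The cost is an additive constant of size $\sim|f(p)|$ in the tangential Hessian, which at the very end I will absorb using that $\omega$ is non-decreasing and $d_\beta(p,0)\le 1/2$, so that
\[
|f(p)|\le |f(0)|+\omega\big(d_\beta(p,0)\big)\le |f(0)|+C\int_0^1\frac{\omega(r)}{r}\,dr .
\]

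The core step is the telescoping identity. Choosing a fixed coarse index $k_0$ with $B_\beta(\tilde p,2\tau^{k_0})\subset B_\beta(0,1)$ and using Lemma \ref{2ndlim} to pass to the limit at $p$, I write
\[
(D')^2 u(p)=(D')^2 u_{k_0}(p)+\sum_{k=k_0}^{\infty}\Big[(D')^2 u_{k+1}(p)-(D')^2 u_k(p)\Big].
\]
The base term is controlled by the second-derivative estimate \eqref{abcd2} of Proposition \ref{prop:2.1}: since $u_{k_0}$ is harmonic with boundary data $u$ on a ball of radius comparable to $1$, the maximum principle gives $\osc u_{k_0}\le 2\sup_{B_\beta(0,1)}|u|$, hence $|(D')^2 u_{k_0}(p)|\le C\sup_{B_\beta(0,1)}|u|$. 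Each increment is bounded by the second-order estimate \eqref{eqn:2nd order} of Lemma \ref{derest21}, which yields $|(D')^2 u_{k+1}(p)-(D')^2 u_k(p)|\le C\,\omega(2\tau^k)$ uniformly across both regimes $k\ge k_p$ and $k<k_p$, the scale transition at $k=k_p-1$ being already packaged in \eqref{eqn:C0 estimate}. Finally, for non-decreasing $\omega$ the standard comparison $\sum_{k\ge k_0}\omega(2\tau^k)\le C\int_0^1\frac{\omega(r)}{r}\,dr$ converts the geometric sum into the Dini integral, giving the desired bound on $(D')^2 u$.

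For the transversal term I would use the equation itself rather than a separate perturbation argument. From $\Delta_\beta u=f$ one has
\[
\beta^{-2}|z_n|^{2(1-\beta)}\frac{\partial^2 u}{\partial z_n\partial\overline{z_n}}=f-\sum_{j=1}^{n-1}\frac{\partial^2 u}{\partial z_j\partial\overline{z_j}},
\]
whose right-hand side is bounded by $|f|+|(D')^2u|$, since the tangential complex Laplacian is a component of the real tangential Hessian $(D')^2u$. Bounding $|f(z)|\le|f(0)|+C\int_0^1\frac{\omega(r)}{r}\,dr$ as above and inserting the Hessian estimate gives $|z_n|^{2-2\beta}\big|\partial^2 u/\partial z_n\partial\overline{z_n}\big|\le C\big(\sup|u|+\int_0^1\frac{\omega(r)}{r}\,dr+|f(0)|\big)$. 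Taking the supremum over all $p\in B_\beta(0,1/2)\setminus\mathcal{S}$ then finishes the proof.

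I expect the only genuinely delicate point to be the bookkeeping at the scale transition $k=k_p$, where the centers of the balls switch from $p$ to its projection $\tilde p\in\mathcal{S}$ and one must check that $p$ still lies in the half-ball where \eqref{abcd2} applies. Since Lemma \ref{derest21} and the estimates \eqref{eqn:C0 estimate} already supply the increment bounds in both regimes with uniform constants, the telescoping series converges and the remaining work is the routine comparison of the geometric sum with the Dini integral.
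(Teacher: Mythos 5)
Your proposal is correct and follows essentially the same route as the paper, which obtains Corollary \ref{hessianbd} precisely by combining the telescoping increments of Lemma \ref{derest21} (estimate \eqref{eqn:2nd order}, with the transition scale $k=k_p-1$ handled through \eqref{eqn:C0 estimate} and the interior derivative estimates of Proposition \ref{prop:2.1}) with the convergence Lemma \ref{2ndlim}, the base scale contributing $C\sup|u|$ and the geometric sum the Dini integral. Your recovery of the weighted transversal term $|z_n|^{2-2\beta}\,\partial^2 u/\partial z_n\partial\overline{z_n}$ directly from the equation $\Delta_\beta u=f$, together with $|f(p)|\le |f(0)|+C\int_0^1\omega(r)r^{-1}\,dr$, is exactly the intended (unwritten) completion of the paper's one-line argument.
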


\medskip

 We can apply  the same argument  for the point $q\in B_{g_\beta}(0,1/2)\backslash \sS$ by solving the boundary problem
\begin{equation}\label{eqn:vk}\Delta_{g_\beta} v_k = f(q),\quad v_k = u, \quad \text{on }\partial B_\beta(q,\tau^k).\end{equation}
%
%
%
%
%

We can obtain similar estimates as those in \eqref{eqn:limits}, \eqref{eqn:1st order} and \eqref{eqn:2nd order} for the functions $v_k$ on balls centered at the point $q$ or $\tilde q$.

\begin{prop} \label{tander} Let $d= d_\beta(p,q)$ for some $\beta\in (1/2, 1)$. There exists $C=C(n)>0$ such that if $u \in C^2(B_\beta(0,1) \setminus \Ss ) \cap L^\infty(B_\beta(0,1))$ solves the conical  Laplace equation \eqref{lapeq}, then for any $p, q \in B_\beta(0, 1/2) \setminus \Ss$, 
\begin{equation*} 
 \sum_{i, j=1}^{2n-2} \left| (D')^2 u(p) - (D')^2 u(q) \right|    \leq  C  \left( d \sup_{B_\beta(0,1)} |u| + \int_0^d \frac{\omega(  r)}{r} dr + d  \int_d^1  \frac{\omega(  r)}{r^2} dr \right) . 
\end{equation*}

\end{prop}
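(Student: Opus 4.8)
The plan is to run Wang's telescoping scheme simultaneously at $p$ and $q$ and to extract the spatial decay from the geodesic convexity of the smooth part of $(\CC^n,g_\beta)$. Fix the integer $N$ with $\tau^{N+1}\le d<\tau^{N}$, so that $\tau^N$ is comparable to $d$, and write $u_k^p,u_k^q$ for the solutions of \eqref{eqn:diri 2.4}, \eqref{eqn:diri 2.41} centered at $p$ and $q$ at scale $\tau^k$. Using Lemma \ref{2ndlim} together with the second-order increment bound \eqref{eqn:2nd order} of Lemma \ref{derest21}, I would first split
\[
(D')^2u(p)-(D')^2u(q)=\big[(D')^2u(p)-(D')^2u_N^p(p)\big]+\big[(D')^2u_N^q(q)-(D')^2u(q)\big]+\big[(D')^2u_N^p(p)-(D')^2u_N^q(q)\big].
\]
The first two brackets are handled by telescoping over the fine scales $k\ge N$: since $|(D')^2u_{k+1}^\bullet-(D')^2u_k^\bullet|\le C\omega(\tau^k)$ and $\sum_{k\ge N}\omega(\tau^k)$ is comparable to $\int_0^{\tau^{N}}\tfrac{\omega(r)}{r}\,dr$, they are bounded by $C\int_0^d\frac{\omega(r)}{r}\,dr$, the overflow coming from $\tau^N\le 2d$ being absorbed into the last term of the statement.

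For the remaining comparison bracket, since $d<\tau^N$ the point $q$ lies in the domain of $u_N^p$, and I would insert the intermediate value $(D')^2u_N^p(q)$:
\[
(D')^2u_N^p(p)-(D')^2u_N^q(q)=\underbrace{(D')^2\big(u_N^p-u_N^q\big)(q)}_{\mathrm{(I)}}+\underbrace{\big[(D')^2u_N^p(p)-(D')^2u_N^p(q)\big]}_{\mathrm{(II)}}.
\]
For $\mathrm{(I)}$ the difference $u_N^p-u_N^q$ solves $\Delta_\beta(u_N^p-u_N^q)=f(p)-f(q)$, so $(D')^2(u_N^p-u_N^q)$ is $g_\beta$-harmonic; Lemma \ref{ukc0} gives $\|u_N^p-u_N^q\|_{L^\infty}\le C\tau^{2N}\omega(2\tau^N)$ on the overlap of the two balls, and the interior estimate \eqref{abcd2} of Proposition \ref{prop:2.1} then yields $|\mathrm{(I)}|\le C\omega(\tau^N)$, which is controlled by $d\int_d^1\frac{\omega(r)}{r^2}\,dr$.

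Term $\mathrm{(II)}$ is the heart of the argument. Setting $g_k:=u_{k+1}^p-u_k^p$ and $\phi_k:=(D')^2u_k^p(p)-(D')^2u_k^p(q)$, I would telescope over the coarse scales,
\[
\mathrm{(II)}=\phi_N=\phi_0+\sum_{k=0}^{N-1}\big[(D')^2g_k(p)-(D')^2g_k(q)\big].
\]
Each $g_k$ is $g_\beta$-harmonic with $\|g_k\|_{L^\infty}\le C\tau^{2k}\omega(\tau^k)$ by \eqref{eqn:C0 estimate}, so $(D')^2g_k$ is again $g_\beta$-harmonic and Proposition \ref{prop:2.1} — in particular the mixed bound \eqref{abcd3} — gives $\sup|\nabla_{g_\beta}(D')^2g_k|_{g_\beta}\le C\tau^{2k}\omega(\tau^k)/\tau^{3k}=C\omega(\tau^k)/\tau^k$. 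Because $p,q\notin\Ss$, the minimal $g_\beta$-geodesic from $p$ to $q$ stays in the smooth part and has length $d$, so integrating this gradient along it yields $|(D')^2g_k(p)-(D')^2g_k(q)|\le Cd\,\omega(\tau^k)/\tau^k$. Summing and comparing with the integral, $\sum_{k<N}\omega(\tau^k)/\tau^k\le C\int_d^1\frac{\omega(r)}{r^2}\,dr$, while the top term $\phi_0$ is bounded by $Cd\sup_{B_\beta(0,1)}|u|$ via the same geodesic integration applied to $u_0^p$ at unit scale. Collecting the pieces produces exactly the three terms of the asserted bound.

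The main obstacle is the control of the increment $(D')^2g_k(p)-(D')^2g_k(q)$ at the coarse scales $k<k_p$, where the ball $B_\beta(\tilde p,2\tau^k)$ meets the singular set $\Ss$: here the spatial variation of the tangential Hessian necessarily involves the transversal derivative $D''(D')^2g_k$, which is the genuinely singular quantity. What makes the estimate go through is that \eqref{abcd3} bounds precisely this mixed derivative in the $g_\beta$-norm uniformly up to $\Ss$, and that geodesic convexity lets one integrate the $g_\beta$-gradient along a path of length exactly $d$ without approaching $\Ss$. The remaining points are routine bookkeeping: the split of the coarse scales into $k\ge k_p$ (balls centered at $p$, away from $\Ss$) and $k<k_p$ (balls centered at the projection $\tilde p$), the choice of slightly enlarged balls so that $q$ and the connecting geodesic sit well inside the domains, and the separate easy treatment of the regime $d\gtrsim\tfrac12$, where the bound follows directly from the Hessian estimate of Corollary \ref{hessianbd}.
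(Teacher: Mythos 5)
Your proposal is correct and follows essentially the same route as the paper: the same four-term decomposition (your terms (I) and (II) are exactly the paper's $I_3$ and $I_2$), the same telescoping of harmonic differences $h_k=u_{k-1}-u_k$ with the gradient bound from Proposition \ref{prop:2.1} integrated along the $\Ss$-avoiding minimal geodesic, and the same treatment of the base term via the $L^\infty$ bound on $u$. The only deviations are bookkeeping you already flag yourself: the paper takes the comparison scale with a three-step buffer ($\tau^{\ell+4}\le d<\tau^{\ell+3}$) so that $q$ and the geodesic sit well inside the domains of $u_\ell$ and $v_\ell$, starts the telescope at $k=2$ rather than $k=0$ so the largest ball fits in $B_\beta(0,1)$, and makes $u_\ell-v_\ell$ genuinely harmonic by subtracting the quadratic $\frac{f(p)-f(q)}{2(n-1)}\abs{s-s(\tilde q)}$ as in Remark \ref{rmto0} before applying the interior estimates.
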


\begin{proof} We will  first assume that  
$$r_p = \min\{r_p,r_q\}\le 2 d $$ 
and fix an integer $\ell\in\mathbb N$ satisfying
\begin{equation}\label{eqn:fix ell}\tau^{\ell + 4}\le d < \tau^{\ell + 3}, \quad \text{or } {\tau^{\ell+1} \le  8 d < \tau^\ell}.\end{equation}
We observe that
$$\tau^{k_p}\le  2 d< 2 \tau^{\ell +3} = \tau^{\ell +2} \quad \Rightarrow \quad k_p > \ell +2.$$
The triangle inequality implies that $r_q\le  d + r_p\le 3 d$, so
$$\tau^{k_q}\le 3 d < 3 \tau^{\ell + 3}\quad \Rightarrow \quad k_q > \ell + 1.$$
Our goal is  to estimate
\begin{eqnarray*}
\big|(D')^2u(p) - (D')^2 u(q) \big|  
& \le  &  \big| (D')^2 u(p) - (D')^2u_\ell(p)   \big| + \ba{(D')^2u_\ell(p) - (D')^2u_\ell(q)}  \\
&&
+ \ba{ (D')^2 u_\ell(q) - (D')^2 v_\ell(q)  } + \ba{ (D')^2 v_\ell(q) - (D')^2 u(q)  }\\
&=&  I_1 + I_2 + I_3 + I_4.
\end{eqnarray*}
We will now estimate $I_1$, $I_2$, $I_3$ and $I_4$ respectively.

\medskip

\begin{enumerate}

\item[{\it Step 1.}]  By \eqref{eqn:2nd order} and \eqref{eqn:limits} we have
\begin{equation}\label{eqn:step3 1}
I_1= \big| (D')^2 u_\ell(p) - (D')^2 u(p)\big| \le C \sum_{k=k_p}^\infty\omega(\tau^k) + C \sum_{k = \ell}^{k_p-1}\omega(2\tau^k).  
\end{equation}
and  similarly   
\begin{equation*}
I_4= \big|D^2_{ij}v_\ell(q) - D^2 _{ij} u(q)\big| \le C \sum_{k=k_q}^\infty\omega(\tau^k) + C\sum_{k = \ell}^{k_q-1}\omega(2\tau^k). 
\end{equation*}

\item[ {\it Step 2. }]  The triangle inequality implies $r_q = d_\beta(q,\tilde q)\le 3 d$, $d_\beta(\tilde p,\tilde q)\le d$ and $d_\beta(\tilde p, q)\le 3d$. Therefore by the choice of $\ell$ as in \eqref{eqn:fix ell},
 $$B_\beta(\tilde q,  \tau^{\ell})\subset B_\beta(\tilde p,2 \tau^\ell), $$ and  $u_\ell$ and $v_\ell$ are both defined on $B_\beta(\tilde q,  \tau^{\ell})$ satisfying
$$\Delta_{g_\beta} u_\ell = f(p),\quad\Delta_{g_\beta} v_\ell = f(q). $$
 \eqref{eqn:L estimate} and Remark \ref{rmto0} imply that
\begin{equation*}
\| u_\ell - v_\ell\|_{L^\infty\xk{ B_\beta(\tilde q,\tau^\ell)  }} \le C  \tau^{2\ell}\omega(2\tau^\ell).
\end{equation*}
Consider the function $$U(s,z_n) = u_\ell(s,z_n) - v_\ell(s,z_n) - \frac{f(p) - f(q)}{2(n-1)}\abs{s - s(\tilde q)}.$$
It is a $g_\beta$-harmonic function satisfying 
$$ \sup_{B_\beta(\tilde q, \tau^\ell)} |U| \le C(n)\tau^{2\ell} \omega(2\tau^\ell) + C \tau^{2\ell}\omega(d) \le C \tau^{2\ell}\omega(2\tau^\ell).$$ 
The derivative estimates  immediately imply that   
 $$\ba{ (D')^2 U(q)  }\le  C \omega(2\tau^\ell)$$
 since  $q\in B_\beta(\tilde q, \frac 1 2 \tau^\ell)$ and so 
\begin{equation}\label{eqn:step3 3}
I_3= \ba{ (D')^2  u_\ell(q) - (D')^2 v_\ell(q)  } \le C  \omega(2\tau^\ell), 
\end{equation}
which implies the estimate for $I_3$. 

\medskip

\item[{\it Step 3}.] To estimate $I_2$, we first define $h_k = u_{k-1} - u_k$ for any $2\le k\le \ell$.  $h_k$ is a $g_\beta$-harmonic function on $B_\beta(\tilde p, 2\tau^k)$ with 
$$\| h_k\|_{L^\infty\xk{ B_\beta(\tilde p, 2\tau^k)  }} \le  C \tau^{2k}\omega(2\tau^k).$$
In particular this implies that
\begin{equation}\label{eqn:step3 4}\| (D')^2 h_k \|_{L^\infty\xk{ B_\beta(\tilde p, 2\tau^{k+1})  }}\le C \omega(2\tau^k). 
\end{equation}
On the other hand, $D_i D_j h_k$ is again a $g_\beta$-harmonic function on $B_\beta(\tilde p, 2\tau^{k+1})$ for $i, j =1, ..., 2n-2$.  Therefore we have 
\begin{equation}\label{eqn:step3 gradient}
\|  \nabla_{g_\beta} (D')^2 h_k\|_{L^\infty\xk{ B_\beta(\tilde p, 2 \tau^{k+2})  }} \le C  \tau^{-k}\omega(2\tau^k). 
\end{equation} 
Integrating along the minimal geodesic $\gamma$ with respect to  $g_\beta$ joining $p$ and $q$, we have 
\begin{equation*}
\ba{ (D')^2 h_k(p) -  (D')^2 h_k(q)  }\le C d\tau^{-k}\omega(2\tau^k).
\end{equation*}  
Such a minimal geodesic does not meet $\Ss$ because $p, q \notin \Ss$ and $(\mathbb{C}^n\setminus \Ss, g_\beta)$ is strictly geodesically convex in $\mathbb{C}^n$. Immediately, for all $2\le k\le \ell$
\begin{equation*}
\ba{ (D')^2 u_k(p) - (D')^2 u_k(q)  } \le \ba{(D')^2u_{k-1}(p) - (D')^2 u_{k-1}(q)  } + C  d\tau^{-k}\omega(2\tau^k). 
\end{equation*}
and so 
\begin{align*}
I_2  \le & \ba{ (D')^2 u_2(p)- (D')^2 u_2(q)   } + C d\sum_{k=3}^\ell \tau^{-k} \omega(2\tau^k).
\end{align*}
To estimate the first term on the RHS, we recall from \eqref{eqn:L estimate}  that
\begin{equation*}
\|u_2 - u\|_{L^\infty\xk{B_\beta(\tilde p, 2 \tau^2)  }} \le C  \tau^{4}\omega(2\tau^2)
\end{equation*}
and so
\begin{equation*}
||u_2 ||_{L^\infty\xk{B_\beta(\tilde p, 2 \tau^2)  }} \le \|u\|_{L^\infty\xk{ B_\beta(\tilde p, 2\tau^2) }} + C \tau^{4}\omega(2\tau^2).
\end{equation*}
Since we can assume $f(p)=0$, the derivative estimates for $g_\beta$-harmonic functions implies that
$$\| (D')^2 u_2\|_{L^\infty\xk{B_\beta (\tilde p, 2\tau^3)   }}\le C \xk{ \|u\|_{L^\infty (B_\beta(0,1))}   + \omega(2\tau^2)  }$$
and by the gradient estimate, 
$$\|\nabla_{g_\beta} (D')^2 u_2\|_{L^\infty\xk{ B_\beta(\tilde p, 2\tau^4)   }} \le C \xk{ \|u\|_{L^\infty(B_\beta(0,1))}   + \omega(2\tau^2)  },$$
integrating along the minimal geodesic $\gamma$ as before, we get
$$\ba{ (D')^2  u_2(p) - (D')^2 u_2(q)  }\le C  d\xk{ \|u\|_{L^\infty(B_\beta(0,1))}   + \omega(2\tau^2)  }.$$
Thus 
\begin{equation*}
I_2\le C d  \|u\|_{L^\infty (B_\beta(0,1))}+ C d\sum_{k=2}^\ell \tau^{-k}\omega(2\tau^k). 
\end{equation*}

\end{enumerate}

\medskip

Combining estimates from the above three steps and the fact that $\omega(2r) \leq 2\omega(r)$,  we have 
\begin{equation}\label{eqn:result1}
\begin{split}
& \ba{ (D')^2 u(p) -  (D')^2 u(q)  }\\
\le &\; C \bk{\sum_{k=k_p}^\infty\omega(\tau^k) + \sum_{k = \ell}^{k_p-1}\omega(2\tau^k) + \sum_{k=k_q}^\infty\omega(\tau^k) + \sum_{k = \ell}^{k_q-1}\omega(2\tau^k)\\
  &\; + \omega(2\tau^\ell) + d \|u\|_{L^\infty} + d\sum_{k=2}^\ell \tau^{-k}\omega(2\tau^k)}\\
  \le & \; C \bk{ d\|u\|_{L^\infty}  + \int_0^d\frac{\omega( t)}{t}dt + d \int_d^1 \frac{\omega( t)}{t^2}dt   }.
\end{split}
\end{equation}
This proves the proposition when $r_p \leq 2d$.  

It remains to prove the proposition for the case  $r_p > 2d$. The argument is parallel to the case when $r_p\leq 2d$ with minor differences. The main difference is that the  $\ell$ in \eqref{eqn:fix ell} may be greater than $k_p$. The estimates \eqref{eqn:L estimate}, \eqref{eqn:1st order} and \eqref{eqn:2nd order} still hold.
In fact  $I_1 + I_4$ is bounded as follows (in contrast with \eqref{eqn:step3 1})
\begin{equation}\label{eqn:step3 8}
I_1+ I_4\le C  \sum_{k=\ell}^\infty \omega(\tau^k).
\end{equation}
The metric ball $B_\beta(q,\tau^{\ell + 1})$ is contained in $B_\beta(p,\tau^\ell)\cap B_\beta(q,\tau^\ell)$, so by the same argument in deriving \eqref{eqn:step3 3}, we have 
$$I_3\le C \omega(\tau^\ell).$$
To estimate $I_2$, we define $h_k = u_{k-1} - u_k$ as before and the estimate follows from the same argument given before. Therefore we complete the proof for the proposition.

\end{proof}

\subsection{Transversal   estimates}

The proof is parallel to that in subsection \ref{tangential estimate}, however there are some significantly more difficult technical differences arising from the singular behavior of $g_\beta$-harmonic functions  near $\sS$. We again assume that $\beta \in (1/2,  1)$. 

Following  subsection \ref{tangential estimate}, 
we fix two points $p,q\in B_\beta(0,1/2)\backslash \sS$ and let 
$$r_p = d_\beta(p, \Ss), \quad r_q= d_\beta(q, \Ss), \quad r_p \leq r_q. $$
Let us  recall 
\begin{equation*}
\rho_n = |z_n|, \quad \theta_n = \arg z_n,\quad r_n = \rho_n^\beta.
\end{equation*}
The conical Laplace operator with respect to $\hat g_\beta$ on $\mathbb{C}$ can be expressed by
\begin{equation}\label{eqn:Lap}|z_n|^{2(1-\beta)}\frac{\partial^2}{\partial z_n\partial \bar z_n} = \frac{\partial^2}{\partial (r_n)^2} + \frac{1}{r_n}\frac{\partial}{\partial r_n} + \frac{1}{\beta^2 (r_n)^2}\frac{\partial^2}{\partial (\theta_n)^2}.\end{equation}

We solve the equations \eqref{eqn:diri 2.4} and \eqref{eqn:diri 2.41}. Applying estimate \eqref{eqn:C0 estimate} and the derivatives estimates for  the $g_\beta$-harmonic functions $u_k - u_{k+1}$, we have
\begin{equation}\label{eqn:1st derivative}
\left\{\begin{aligned}
\big\| |z_n|^{1-\beta} \xk{ \frac{\partial u_k}{\partial z_n} - \frac{\partial u_{k+1}}{\partial z_n}   }\big\|_{L^\infty\xk{B_\beta(p,\tau^{k+2})    }}\le C(n) \tau^{k}\omega(\tau^k),& \quad\text{if }k\ge k_p \\
\big \| |z_n|^{1-\beta} \xk{ \frac{\partial u_k}{\partial z_n} - \frac{\partial u_{k+1}}{\partial z_n}   }\big\|_{L^\infty\xk{B_\beta(\tilde p,2\tau^{k+2})   }}\le C(n) \tau^{k}\omega(2\tau^k),& \quad\text{if }k< k_p.
\end{aligned}\right.
\end{equation}
Combining \eqref{eqn:1st order} and gradient estimate for the harmonic function $D_i u_k - D_i u_{k+1}$ for $i=1$, ... , $2n-2$, we obtain the following lemma.

\begin{lemma}\label{lemma 2.9 1} There exists $C(n)>0$ such that for all $k\in \mathbb{Z}^+$, 
\begin{equation}\label{eqn:2nd derivative} %
\left\{\begin{aligned}
\big\||z_n|^{1-\beta}\xk{(\partial_{z_n} D' )u_k - (\partial_{z_n} D' ) u_{k+1}   }\big\|_{L^\infty\xk{ B_\beta(p,\tau^{k+3})    }} \le C(n) \omega(\tau^k),& \quad \text{if }k\ge k_p\\
\big\||z_n|^{1-\beta}\xk{ (\partial_{z_n} D' ) u_k - (\partial_{z_n} D' ) u_{k+1}   }\big\|_{L^\infty\xk{ B_\beta(\tilde p,\tau^{k+3})    }} \le C(n) \omega(2\tau^k),& \quad \text{if }k< k_p
\end{aligned}\right.
\end{equation}
\end{lemma}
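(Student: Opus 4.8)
The plan is to reduce the weighted mixed-derivative bound to the interior gradient estimate for $g_\beta$-harmonic functions from Proposition \ref{prop:2.1}, applied to functions that have already been differentiated once in a tangential direction. For $1\le i\le 2n-2$ set $v = D_i u_k - D_i u_{k+1}$. Since both $u_k$ and $u_{k+1}$ solve $\Delta_\beta u_k = \Delta_\beta u_{k+1} = f(p)$ with the same constant right-hand side, the difference $u_k - u_{k+1}$ is $g_\beta$-harmonic on the common domain; and because $D_i$ commutes with $\Delta_\beta$, the function $v$ is again $g_\beta$-harmonic there. Thus the quantity to be estimated, $|z_n|^{1-\beta}\partial_{z_n}(D_i u_k - D_i u_{k+1}) = |z_n|^{1-\beta}\partial_{z_n} v$, is a transversal derivative of a harmonic function whose $L^\infty$ size is already controlled by \eqref{eqn:1st order}.

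The first step is the algebraic observation that $|z_n|^{1-\beta}\partial_{z_n}$ is, up to a factor depending only on $\beta$, a single component of the $g_\beta$-gradient. Since the relevant inverse metric coefficient is $(g_\beta)^{n\bar n} = \beta^{-2}|z_n|^{2(1-\beta)}$, for any function $v$ the $z_n$-component of $|\nabla_{g_\beta} v|_{g_\beta}^2$ equals a fixed multiple of $\beta^{-2}|z_n|^{2(1-\beta)}|\partial_{z_n} v|^2$, whence $|z_n|^{1-\beta}|\partial_{z_n} v| \le C(\beta)\,|\nabla_{g_\beta} v|_{g_\beta}$. Consequently it suffices to bound the \emph{full} $g_\beta$-gradient of $v$, which is exactly the object controlled by \eqref{abcd1}.

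The second step feeds the $L^\infty$ bounds of \eqref{eqn:1st order} into the gradient estimate \eqref{abcd1}, using $\mathrm{osc}\le 2\|\cdot\|_{L^\infty}$. When $k\ge k_p$ the ball $B_\beta(p,\tau^{k+2})$ is disjoint from $\Ss$, so $g_\beta$ is smooth there (equivalently $w=z_n^\beta$ furnishes Euclidean coordinates), and the estimate on $B_\beta(p,\tau^{k+3}) = B_\beta(p,\tau^{k+2}/2)$ gives $\sup |\nabla_{g_\beta} v|_{g_\beta} \le C\tau^{-(k+2)}\|v\|_{L^\infty(B_\beta(p,\tau^{k+2}))} \le C\tau^{-(k+2)}\tau^k\omega(\tau^k) = C(n)\omega(\tau^k)$, since $\tau=1/2$. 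When $k<k_p$ I would instead work on $B_\beta(\tilde p,2\tau^{k+2})$, centered on $\Ss$; here Proposition \ref{prop:2.1} applies verbatim after translating $\tilde p$ to the origin (using invariance of $g_\beta$ under translation in $z_1,\ldots,z_{n-1}$), yielding on $B_\beta(\tilde p,\tau^{k+3})$ the bound $\sup |\nabla_{g_\beta} v|_{g_\beta} \le C(2\tau^{k+2})^{-1}\tau^k\omega(2\tau^k) = C(n)\omega(2\tau^k)$. Combining either case with the inequality of the first step produces \eqref{eqn:2nd derivative}.

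The only genuine subtlety, and the step I would treat most carefully, is the uniformity of the gradient estimate up to $\Ss$ in the case $k<k_p$: the harmonic function $v$ and its $g_\beta$-gradient must be controlled across the singular hyperplane, with a constant independent of the distance to $\Ss$. This is precisely what \eqref{abcd1}--\eqref{abcd3} of Proposition \ref{prop:2.1} supply, since those bounds were obtained through the smooth approximations $g_\epsilon$ and Cheng--Yau's estimate exactly so that they hold on $B_\beta(\tilde p,r/2)\setminus\Ss$ uniformly; no new analysis is needed beyond invoking that proposition. Everything else is bookkeeping with the geometric inclusions of the balls and the elementary monotonicity $\omega(2r)\le 2\omega(r)$; the transitional index $k=k_p-1$, where $u_k$ and $u_{k+1}$ are centered at $\tilde p$ and $p$ respectively, is handled exactly as in the tangential estimates of subsection \ref{tangential estimate} using $B_\beta(p,\tau^{k_p})\subset B_\beta(\tilde p,2\tau^{k_p-1})$.
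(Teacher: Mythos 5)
Your proof is correct and follows essentially the same route as the paper's: the authors obtain \eqref{eqn:2nd derivative} precisely by applying the gradient estimate \eqref{abcd1} of Proposition \ref{prop:2.1} to the $g_\beta$-harmonic functions $D' u_k - D' u_{k+1}$, whose $L^\infty$ bounds are supplied by \eqref{eqn:1st order}, with the weighted transversal derivative $|z_n|^{1-\beta}|\partial_{z_n}(\cdot)|$ dominated by the full $g_\beta$-gradient exactly as in your first step. Your scaling bookkeeping with $\tau = 1/2$ and your remark on the transitional index $k = k_p - 1$ are consistent with the paper's (largely implicit) treatment, so nothing is missing.
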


We also have the following lemma similar to Lemma \ref{2ndlim}.

\begin{lemma}\label{lemma 2.10 1}

\begin{align*}
\lim_{k\to\infty} \frac{\partial u_k}{\partial r_n }(p) = \frac{\partial u}{\partial r_n }(p),& \quad \lim_{k\to\infty}\frac{\partial u_k}{ r_n \partial \theta_n}(p) = \frac{\partial u}{ r_n \partial \theta_n}(p)\\
\lim_{k\to\infty} \frac{\partial^2 u_k}{\partial s_i\partial r_n } (p) = \frac{\partial^2 u}{\partial s_i\partial r_n}(p) ,& \quad  \lim_{k\to\infty}\frac{\partial^2 u_k}{ r_n \partial s_i\partial \theta_n} (p) = \frac{\partial^2 u}{ r_n \partial s_i \partial \theta_n}(p).
\end{align*}

\end{lemma}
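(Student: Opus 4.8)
I want to establish Lemma \ref{lemma 2.10 1}, which asserts the convergence of the first-order transversal derivatives $\partial_{r_n} u_k$, $\frac{1}{r_n}\partial_{\theta_n} u_k$ and the mixed second-order derivatives $\partial_{s_i}\partial_{r_n} u_k$, $\frac{1}{r_n}\partial_{s_i}\partial_{\theta_n} u_k$ to the corresponding quantities for $u$, as $k\to\infty$, evaluated at the fixed point $p$. The guiding principle, following Lemma \ref{2ndlim}, is to compare $u_k$ to a local model $\tilde u$ of $u$ near $p$ and then invoke the derivative estimates for conical harmonic functions from Proposition \ref{prop:2.1}. Since $p \notin \Ss$, for $k \ge k_p$ the ball $B_\beta(p,\tau^k)$ avoids the singular hyperplane, so $g_\beta$ is a genuine smooth flat metric there and $u_k - u$ extends the earlier analysis; the only new content is to interpret the operators $D_{2n-1} = \partial_{r_n}$, $D_{2n} = \frac{1}{r_n}\partial_{\theta_n}$ in terms of the holomorphic coordinate $w = z_n^\beta$ used in the proof of Lemma \ref{2ndlim}.

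\textbf{Key steps.} First I would fix $k \ge k_p$ and pass to the single-valued branch $w = z_n^\beta$, in which $g_\beta$ is the standard Euclidean metric on $B_\beta(p,\tau^k)$ and the operators $D'' = (D_{2n-1}, D_{2n})$ are the constant-coefficient real derivatives $2\,\mathrm{Re}\,\partial_w$ and $-2\,\mathrm{Im}\,\partial_w$ (up to normalization), exactly as $D'$ is built from $\partial_{s_j}$. Next I would reuse the Taylor expansion of $u$ at $p$ from the proof of Lemma \ref{2ndlim}: writing $u = \tilde u + o(|s - s(p)| + |w - w(p)|)$ with $\tilde u$ a quadratic polynomial in $(s,w)$ satisfying $\Delta_{g_\beta}\tilde u = f(p)$, the difference $u_k - \tilde u$ is $g_\beta$-harmonic on $B_\beta(p,\tau^k)$ with boundary values of size $o(\tau^{2k})$. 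Then I apply the interior derivative estimates \eqref{abcd3} of Proposition \ref{prop:2.1} to $u_k - \tilde u$: since $D''$ and $D' D''$ lose one extra power of $r$ compared to $D'$ (reflecting the $r^{-1}$ weight inherent in $D_{2n}$), I obtain at $p$
\begin{equation*}
| D'' u_k - D'' \tilde u|(p) \le C\tau^{-k} o(\tau^{2k}) \to 0, \qquad
| D' D'' u_k - D' D'' \tilde u|(p) \le C\tau^{-2k} o(\tau^{2k}) \to 0,
\end{equation*}
as $k \to \infty$. Because $\tilde u$ is the second-order jet of $u$ at $p$, one has $D'' \tilde u(p) = D'' u(p)$ and $D' D'' \tilde u(p) = D' D'' u(p)$, which yields the four claimed limits once I identify $\partial_{r_n}$, $\frac{1}{r_n}\partial_{\theta_n}$ with the components of $D''$.

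\textbf{Main obstacle.} The delicate point is the correct scaling in the transversal direction. Whereas the tangential operators $D'$ scale like $\tau^{-1}$ per derivative, the transversal operator $D_{2n} = \frac{1}{r_n}\partial_{\theta_n}$ carries an additional weight tied to the distance $r_p$ from $\Ss$, and near the boundary of the regime $k \approx k_p$ the coordinate $w = z_n^\beta$ is only single-valued on $B_\beta(p,\tau^k)$ for $k \ge k_p$. I must therefore verify that the evaluation at the fixed interior point $p$, which sits at $g_\beta$-distance $r_p \ge \tau^{k_p}$ from $\Ss$, keeps the weight $|z_n|^{1-\beta}$ (equivalently the Jacobian $\frac{\partial w}{\partial z_n}$) bounded above and below on the relevant ball, so that the Euclidean estimate in the $w$-coordinate transfers faithfully to the $D''$-derivatives. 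Once the branch of $w$ is fixed and the uniform two-sided bound on $|z_n|^{1-\beta}$ near $p$ is recorded, the remaining argument is a verbatim repetition of the proof of Lemma \ref{2ndlim} with $D''$ in place of the second tangential derivative, so I expect no further difficulty beyond this bookkeeping.
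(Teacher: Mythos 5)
Your proposal follows essentially the same route as the paper: change to the coordinate $w=z_n^\beta$ on $B_\beta(p,\tau^k)$ for $k\ge k_p$, compare $u_k$ with the quadratic Taylor jet $\tilde u$ from Lemma \ref{2ndlim}, apply derivative estimates to the harmonic difference with $o(\tau^{2k})$ boundary data, and transfer the $\partial_w$ bounds back to $\partial_{r_n}$ and $\frac{1}{r_n}\partial_{\theta_n}$ at $p$. One small correction: $D_{2n-1},D_{2n}$ are the \emph{polar} frame in the $w$-plane, not constant-coefficient combinations of $\mathrm{Re}\,\partial_w,\mathrm{Im}\,\partial_w$, but since they differ from the Cartesian frame by a pointwise rotation, the pointwise identity $\bigl||z_n|^{1-\beta}\partial_{z_n}v\bigr|^2=\bigl(\partial_{r_n}v\bigr)^2+\frac{1}{\beta^2 r_n^2}\bigl(\partial_{\theta_n}v\bigr)^2$ (which is exactly how the paper phrases the transfer) makes your evaluation at the fixed point $p$ go through verbatim.
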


\begin{proof} For sufficiently large $k$, we change coordinates by letting $w = (z_n)^\beta$   on $B_\beta(p,\tau^k)$. The function $u_k-\tilde u$ is  harmonic on the ball $B_\beta(p,\tau^k)$ with respect to the Euclidean metric in   $\{s_i, w\}$, where $\tilde u$ is defined in Lemma \ref{2ndlim}. It follows that
\begin{equation}\label{eqn:1st 2nd 2}
\left\{\begin{aligned}
\ba{\partial_w u_k(p) - \partial_w u(p)} \le C(n)\tau^{-k} o(\tau^{2k}) \to 0, & \quad \text{as }k\to\infty,\\
\ba{ \partial_w  D' u_k(p) - \partial_w D' u(p)}\le C(n)\tau^{-2k} o(\tau^{2k}) \to 0, & \quad \text{as }k\to \infty.
\end{aligned}\right.
\end{equation}
Since at $p$
\begin{equation}\label{eqn:z w}
\partial_w u = z_n^{1-\beta} \partial_{z_n} u, \quad  \partial_w D' u = z_n^{1-\beta} \partial_w D' u, 
\end{equation}
we have the following estimates away from $\sS$, 
\begin{align*}
\Abs{ |z_n|^{1-\beta} \xk{ \frac{\partial u_k}{\partial z_n} - \frac{\partial u}{\partial z_n}  }   }& = \Abs{ \frac{\partial }{\partial r_n }(u_k - u) - \frac{\sqrt{-1}}{\beta r_n }\frac{\partial}{\partial \theta_n}(u_k - u)   }\\
& = \bk{ \frac{\partial }{\partial r_n }(u_k - u)}^2 + \frac{1}{\beta^2 (r_n)^2} \bk{\frac{\partial}{\partial \theta_n}(u_k - u) }^2, 
\end{align*}
\begin{align*}
\Abs{ |z_n|^{1-\beta} \xk{ \partial_{z_n}  D'   u_k - \partial_{z_n}  D'  u }   }& = \Abs{ \frac{\partial }{\partial r_n }(D' u_k - D' u) - \frac{\sqrt{-1}}{\beta r_n }\frac{\partial}{\partial \theta_n}(D' u_k - D' u)   }\\
& = \bk{ \frac{\partial }{\partial r_n }(D'u_k - D' u)}^2 + \frac{1}{\beta^2 (r_n) ^2} \bk{\frac{\partial}{\partial \theta_n}(D' u_k - D'  u) }^2.
\end{align*}
The lemma then follows from \eqref{eqn:1st 2nd 2}.

\end{proof}

\bigskip

Our goal is to estimate
 \begin{equation}\label{eqn:goal}
 J=\ba{ \partial_{r_n} D' u (p) - \partial_{r_n} D' u (q)    },\quad  K=\ba{ \frac{\partial^2 u}{r_n \partial s_i \partial\theta_n}(p) - \frac{\partial^2 u}{r_n   \partial s_i\partial \theta_n}(q)   }.\end{equation}    
We choose $\ell$ as in \eqref{eqn:fix ell} and  estimate $J$ in \eqref{eqn:goal}. The quantity $K$ can be similarly estimated.  
We follow the same argument as  in subsection \ref{tangential estimate}  by decomposing $J$ into $J_1$, $J_2$, $J_3$ and $J_4$. 
\begin{equation}
\begin{split}
\ba{ \partial_{r_n} D' u (p) - \partial_{r_n} D' u (q)    }\le  & \ba{ \partial_{r_n} D' u(p) -\partial_{r_n} D' u_\ell (p)  } + \ba{\partial_{r_n} D' u_\ell (p) - \partial_{r_n} D' u_\ell (q)  }\\
& + \ba{ \partial_{r_n} D' u_\ell (q) - \partial_{r_n} D' v_\ell  (q)  } + \ba{  \partial_{r_n} D' v_\ell (q) - \partial_{r_n} D' u (q)  }\\
= & J_1 + J_2 + J_3 + J_4,
\end{split}
\end{equation}
where $v_\ell$ is defined as in \eqref{eqn:vk}.
%
 %
%

\begin{lemma} \label{j143} There exists $C=C(n)>0$ such that for all $p,q\in B_\beta(0,1/2)\backslash \sS$, 
\begin{equation}\label{eqn:J1}
\begin{split}
& J_1=\Big| \partial_{r_n} D' u  (p) - \partial_{r_n} D' u_\ell (p)  \Big| \le C  \sum_{k=\ell}^\infty \omega(\tau^k)      , \\
&J_4=\Big| \partial_{r_n} D' u  (q) -   \partial_{r_n} D' v_\ell  (q)  \Big| \le C  \sum_{k=\ell}^\infty \omega(\tau^k)  , \\
&J_3 = \ba{ \partial_{r_n} D' u_\ell (q) - \partial_{r_n} D' v_\ell (q) } \le C \omega( \tau^\ell).
\end{split}
\end{equation}
\end{lemma}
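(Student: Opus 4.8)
The plan is to follow Steps 1 and 2 in the proof of Proposition \ref{tander} almost verbatim, replacing the pure tangential Hessian $(D')^2$ by the mixed operator $\partial_{r_n}D'$. The one extra ingredient is the pointwise domination
\[
\ba{\partial_{r_n}D'w}\le |z_n|^{1-\beta}\,\ba{\partial_{z_n}D'w}\qquad\text{on }\mathbb C^n\setminus\sS,
\]
which is immediate from the identity $|z_n|^{1-\beta}\partial_{z_n}D'w=\partial_{r_n}D'w-\frac{\sqrt{-1}}{\beta r_n}\partial_{\theta_n}D'w$ already used in the proof of Lemma \ref{lemma 2.10 1}, since the modulus of the right-hand side dominates its real part. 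Consequently every bound on $|z_n|^{1-\beta}\partial_{z_n}D'$ supplied by Lemma \ref{lemma 2.9 1} transfers directly to $\partial_{r_n}D'$, and the whole proof becomes bookkeeping analogous to the tangential case.

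To bound $J_1$ I would telescope. By Lemma \ref{lemma 2.10 1} we have $\partial_{r_n}D'u_k(p)\to\partial_{r_n}D'u(p)$, so
\[
\partial_{r_n}D'u(p)-\partial_{r_n}D'u_\ell(p)=\sum_{k=\ell}^\infty\bk{\partial_{r_n}D'u_{k+1}(p)-\partial_{r_n}D'u_k(p)}.
\]
Each summand is controlled through the displayed inequality together with Lemma \ref{lemma 2.9 1}: for $k\ge k_p$ the estimate is posed on $B_\beta(p,\tau^{k+3})\ni p$ and yields $C\omega(\tau^k)$, while for $\ell\le k<k_p$ it is posed on $B_\beta(\tilde p,\tau^{k+3})$ and yields $C\omega(2\tau^k)\le 2C\omega(\tau^k)$. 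Summing and using monotonicity of $\omega$ gives $J_1\le C\sum_{k=\ell}^\infty\omega(\tau^k)$; the identical argument centered at $q$ (with the functions $v_k$ and the analogue of Lemma \ref{lemma 2.10 1}) produces the same bound for $J_4$.

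For $J_3$ I would compare $u_\ell$ and $v_\ell$ on the common ball $B_\beta(\tilde q,\tau^\ell)\subset B_\beta(\tilde p,2\tau^\ell)$, exactly as in Step 2 of Proposition \ref{tander}. Setting $U=u_\ell-v_\ell-\frac{f(p)-f(q)}{2(n-1)}\abs{s-s(\tilde q)}$, Remark \ref{rmto0} makes $U$ a $g_\beta$-harmonic function on $B_\beta(\tilde q,\tau^\ell)$, and \eqref{eqn:L estimate} gives $\sup_{B_\beta(\tilde q,\tau^\ell)}|U|\le C\tau^{2\ell}\omega(2\tau^\ell)$. Since the correction term depends only on $s$, one has $\partial_{r_n}D'U=\partial_{r_n}D'(u_\ell-v_\ell)$, so it suffices to estimate $\partial_{r_n}D'U(q)$. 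Because $q\in B_\beta(\tilde q,\tfrac12\tau^\ell)\setminus\sS$ (using $r_q\le 3d<\tfrac12\tau^\ell$), the interior estimate \eqref{abcd3} with $k=1$, $r=\tau^\ell$ applied to $U$ bounds $|(D')D''U|_{g_\beta}$, hence $\ba{\partial_{r_n}D'U(q)}$, by $C\,\osc_{B_\beta(\tilde q,\tau^\ell)}U/\tau^{2\ell}\le C\omega(2\tau^\ell)\le C\omega(\tau^\ell)$, which is the asserted bound $J_3\le C\omega(\tau^\ell)$.

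The step I expect to be the main obstacle is that, unlike $(D')^2$, the operator $\partial_{r_n}$ neither preserves $g_\beta$-harmonicity nor is regular along $\sS$, so one cannot simply differentiate the harmonic functions $u_k-u_{k+1}$ and quote interior estimates; this is precisely the difficulty packaged into Lemma \ref{lemma 2.9 1}, which estimates the scale-invariant quantity $|z_n|^{1-\beta}\partial_{z_n}D'$ instead. The remaining delicate point is the handful of transition indices $k$ just below $k_p$, for which the evaluation point $p$ drifts to (or outside) the boundary of the ball $B_\beta(\tilde p,\tau^{k+3})$ on which Lemma \ref{lemma 2.9 1} is posed; I would dispose of these finitely many terms by interior gradient estimates on the smooth ball $B_\beta(p,\tfrac12 r_p)$, whose radius is comparable to $\tau^k$ there, each contributing at most $C\omega(\tau^{k_p})$ and hence absorbed into the tail $\sum_{k\ge\ell}\omega(\tau^k)$.
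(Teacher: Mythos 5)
Your proposal is correct and follows essentially the same route as the paper, whose proof of this lemma is literally an appeal to Steps 1 and 2 of Proposition \ref{tander} combined with Lemmas \ref{lemma 2.9 1} and \ref{lemma 2.10 1} --- exactly the telescoping for $J_1,J_4$ and the comparison of $u_\ell,v_\ell$ via the harmonic correction $U$ on $B_\beta(\tilde q,\tau^\ell)$ that you carry out, and your pointwise domination $|\partial_{r_n}D'w|\le |z_n|^{1-\beta}|\partial_{z_n}D'w|$ is the identity already recorded in the proof of Lemma \ref{lemma 2.10 1}. Your explicit treatment of the transition indices just below $k_p$ (via interior estimates in the smooth coordinates $(s,w)$ on $B_\beta(p,\tfrac12 r_p)$) repairs a point the paper glosses over, and the only detail left implicit --- $J_3$ when $r_p>2d$ --- is completed exactly as the paper's $I_3$ there, working on $B_\beta(q,\tau^{\ell+1})\subset B_\beta(p,\tau^\ell)\cap B_\beta(q,\tau^\ell)$ in the coordinates $(s,w)$.
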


\begin{proof} The estimate for $J_1$  and $J_4$ follow from similar argument for \eqref{eqn:step3 1}. The estimate for $J_3$ follows from similar argument for \eqref{eqn:step3 3} by applying Lemmas  \ref{lemma 2.9 1} and \ref{lemma 2.10 1}.

\end{proof}

However, we have to work harder to estimate $J_2$. We also consider two cases:  $r_p\le 2d$ and $r_p>2d$.

If $r_p = \min\{r_p,r_q\}\le 2d$, we will work on the geodesic ball $B_\beta(\tilde p, 2 \tau^k)$ as before and let $h_k = u_{k-1} - u_k$, for $2\le k\le \ell$. $h_k$ is a harmonic function on $B_\beta(\tilde p, 2 \tau^k)$ with 
$$\| h_k\|_{L^\infty\xk{ B_\beta(\tilde p, 2 \tau^k)   }} \le C(n) \tau^{2k} \omega(2 \tau^k).$$
From \eqref{eqn:step3 4} and \eqref{eqn:step3 gradient}, we have
\begin{equation}\label{eqn:J2 1}
\| (D')^3 h_k\|_{L^\infty\xk{ B_\beta(\tilde p, 2 \tau^{k+2})}} + \big\| |z_n|^{1-\beta} \frac{\partial  }{\partial z_n} \left( (D')^2 h_k \right)\big\|_{L^\infty\xk{B_\beta(\tilde p, 2\tau^{k+2})\backslash \sS   }}\le C(n)\tau^{-k} \omega(2 \tau^k).
\end{equation}
Then the following lemma holds.
\begin{lemma}
\label{lemma 2.31}
There exists   $C=C(n,\beta) >0$ such that for any  $ z\in B_\beta(\tilde p, 2\tau^{k+4})\backslash \sS$,
$$\left| \frac{\partial (D' h_k)  }{\partial r_n}  \right|  (z)+ \left| \frac{\partial (D' h_k) }{r_n \partial \theta_n}   \right| (z)  \le C (r_n)^{\frac 1 \beta - 1} \tau^{-k(\frac 1\beta - 1)} \omega(\tau^k)  $$
 where $r_n=|z_n|^\beta$ and $\theta_n = \arg z_n$.

\end{lemma}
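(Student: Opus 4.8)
\emph{Plan.} The strategy is to reduce the two transversal mixed derivatives on the left to the single complex quantity $|z_n|^{1-\beta}\partial_{z_n}(D'h_k)$ and then to read off the sharp exponent $\tfrac1\beta-1$ from the Euler structure of the $g_\beta$-Laplacian in the $z_n$-fibre. Writing $v$ for any single first tangential derivative $D_i h_k$ ($1\le i\le 2n-2$), I would first record, exactly as in the computation preceding Lemma \ref{lemma 2.10 1}, the pointwise identity
\[
\Abs{|z_n|^{1-\beta}\partial_{z_n} v} = \bk{\frac{\partial v}{\partial r_n}}^2 + \frac{1}{\beta^2 (r_n)^2}\bk{\frac{\partial v}{\partial \theta_n}}^2 .
\]
Hence both $|\partial_{r_n}(D'h_k)|$ and $|\tfrac{1}{r_n}\partial_{\theta_n}(D'h_k)|$ are bounded by $C(\beta)\,|z_n|^{1-\beta}|\partial_{z_n} v|$, and it suffices to prove $|z_n|^{1-\beta}|\partial_{z_n}v|(z)\le C (r_n)^{\frac1\beta-1}\tau^{-k(\frac1\beta-1)}\omega(\tau^k)$ for $z\in B_\beta(\tilde p,2\tau^{k+4})\setminus\sS$.

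Second, since $D_i$ commutes with $\Delta_\beta$, the function $v=D_ih_k$ is $g_\beta$-harmonic, so by \eqref{eqn:Lap} it satisfies, for each frozen tangential variable $s$,
\[
\frac{\partial^2 v}{\partial (r_n)^2}+\frac{1}{r_n}\frac{\partial v}{\partial r_n}+\frac{1}{\beta^2 (r_n)^2}\frac{\partial^2 v}{\partial (\theta_n)^2} = -\sum_{j=1}^{n-1}\frac{\partial^2 v}{\partial z_j\partial\bar z_j}=:\hat F .
\]
The right-hand side is a third order purely tangential derivative of $h_k$, so by \eqref{eqn:J2 1} one has $\|\hat F\|_{L^\infty(B_\beta(\tilde p,2\tau^{k+2}))}\le C\tau^{-k}\omega(\tau^k)$, while \eqref{eqn:1st order} gives $\|v\|_{L^\infty(B_\beta(\tilde p,2\tau^{k+2}))}\le C\tau^k\omega(\tau^k)$.

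Third, I would expand $v=\sum_m v_m(s,r_n)e^{\sqrt{-1}m\theta_n}$ and $\hat F=\sum_m\hat F_m e^{\sqrt{-1}m\theta_n}$, with $\sup_m|\hat F_m|\le\|\hat F\|$. Each $v_m$ solves the Euler-type ODE $\partial_{r_n}^2 v_m+\tfrac1{r_n}\partial_{r_n}v_m-\tfrac{m^2}{\beta^2 (r_n)^2}v_m=\hat F_m$, whose homogeneous solutions are $(r_n)^{\pm|m|/\beta}$. Boundedness of $v$ forces the coefficient of the singular solution $(r_n)^{-|m|/\beta}$ to vanish, so $v_m=c_m (r_n)^{|m|/\beta}+v_m^{p}$ with $|c_m|\le C\|v\|(2\tau^{k+2})^{-|m|/\beta}$ and $v_m^p$ the variation-of-parameters particular solution built from $(r_n)^{\pm|m|/\beta}$. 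Differentiating in $r_n$ and in $\theta_n$, the leading homogeneous mode $|m|=1$ yields a contribution of size $\tau^{k(1-1/\beta)}\omega(\tau^k)(r_n)^{1/\beta-1}=(r_n)^{1/\beta-1}\tau^{-k(1/\beta-1)}\omega(\tau^k)$, which is exactly the asserted rate.

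The remaining, and main, point is the mode-by-mode bookkeeping showing that every other contribution is dominated by this rate. The constant mode $m=0$ has no homogeneous transversal gradient, and its particular part is $O(r_n\,\tau^{-k}\omega(\tau^k))$; the $|m|=1$ particular solution is $O((r_n)^{1/\beta-1}(2\tau^{k+2})^{2-1/\beta}\tau^{-k}\omega(\tau^k))$; and the modes $|m|\ge2$ carry an extra factor $(r_n/\tau^k)^{(|m|-1)/\beta}$ and are summed using the decay of the Fourier coefficients of $v$. Each of these is $\le C (r_n)^{1/\beta-1}\tau^{-k(1/\beta-1)}\omega(\tau^k)$ precisely because $\beta\in(1/2,1)$ gives $2-\tfrac1\beta>0$, together with $r_n\le 2\tau^{k+4}\le\tfrac14\cdot 2\tau^{k+2}$, so that $(r_n/\tau^k)^{2-1/\beta}\le C$. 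The hard part is thus this sharp summation — isolating $|m|=1$ as the unique critical homogeneity and controlling the particular solutions and the tail uniformly in $s$ — and it is exactly here that the hypothesis $\beta>1/2$ is used.
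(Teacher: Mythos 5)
Your proposal is correct, and its outer skeleton coincides with the paper's: both freeze the tangential variables, read the $g_\beta$-harmonicity of $v = D'h_k$ fibrewise as a one-dimensional conical Poisson equation $|z_n|^{2(1-\beta)}\partial_{z_n}\partial_{\overline{z_n}} v = -\sum_j \partial^2_{s_j} v =: \hat F$ with $\|\hat F\|_{L^\infty} \le C\tau^{-k}\omega(\tau^k)$ by \eqref{eqn:J2 1} and $\|v\|_{L^\infty}\le C\tau^k\omega(\tau^k)$ by \eqref{eqn:1st order}, and both convert $\partial_{z_n}$ into $\partial_{r_n}$ and $r_n^{-1}\partial_{\theta_n}$ at the cost of the weight $(r_n)^{\frac1\beta-1}$. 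Where you genuinely diverge is the proof of the fibrewise gradient bound $|\partial_{z_n} v|\le C\tau^{-k(\frac1\beta-1)}\omega(\tau^k)$: the paper simply applies its Proposition \ref{gradpoisson} (the scaled form of Lemma \ref{lemma:2.3}, proved via the Riesz representation \eqref{eqn:Rieze} and a four-region splitting of the potential integral) on the fibre disks $B_{\mathbb C}(x,(2\tau^{k+3})^{1/\beta})$ centered at $x\in\sS$, whereas you re-derive it by separation of variables in $\theta_n$ and Euler ODE analysis, isolating the $|m|=1$ homogeneous mode $c_1 (r_n)^{1/\beta}$ as the unique critical homogeneity. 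I checked your bookkeeping and it is sound: $|c_m|\le C(\|v\|+\|\hat F\| R^2)R^{-|m|/\beta}$ with $R\asymp \tau^k$, the $m=0$ particular part contributes $O(r_n\tau^{-k}\omega(\tau^k))$, the $|m|=1$ particular solution contributes $O((r_n)^{\frac1\beta-1} R^{2-\frac1\beta}\tau^{-k}\omega(\tau^k))$, and both are dominated by $(r_n)^{\frac1\beta-1}\tau^{-k(\frac1\beta-1)}\omega(\tau^k)$ precisely because $(r_n/\tau^k)^{2-\frac1\beta}\le C$ when $\beta>1/2$ — the same place where the paper's constant degenerates like $1/(2\beta-1)$. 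Two small remarks: for the tail $|m|\ge 2$ you need no decay of Fourier coefficients beyond $|v_m|\le \|v\|_{L^\infty}$, since on $B_\beta(\tilde p,2\tau^{k+4})$ one has $r_n/R\le \tau^2$ and the series $\sum_m |m|\,(r_n/R)^{(|m|-1)/\beta}$ already converges geometrically; and the exclusion of the singular solutions $(r_n)^{-|m|/\beta}$ (and $\log r_n$ for $m=0$) requires knowing $v=D'h_k$ stays bounded up to $\sS$, which the paper's estimate \eqref{abcd2} supplies. The trade-off between the two routes: the paper's potential-theoretic lemma is shorter and immediately reusable — it is invoked again verbatim for the pure second transversal derivatives in Lemma \ref{lemma 2.4}, where your approach would require repeating the mode-by-mode analysis with one more derivative — while your expansion yields finer structural information (the $(r_n)^{|m|/\beta}$ asymptotics of conical harmonic functions near $\sS$, in the spirit of Donaldson's original analysis), at the price of more delicate summation.
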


\begin{proof} On $B_\beta(\tilde p, 2 \tau^{k+2})$, we define $F$ by 
\begin{equation}\label{eqn:J2 2}
F= |z_n|^{2(1-\beta)}\frac{\partial^2 (D' h_k)}{\partial z_n\partial \bar z_n} = -\sum_{j=1}^{2n-2} \frac{\partial^2(D'h_k)}{\partial s_j^2}.\end{equation}
Fix a point $ x=(x_1, x_2, ..., x_n) \in \sS\cap B_\beta(\tilde p, 2 \tau^{k+3})$. Then  $B_\beta( x , 2 \tau^{k+3})\subset B_\beta(\tilde p, 2 \tau^{k+2})$. Consider the intersection of $B_\beta( x , 2 \tau^{k+3})$ with $\{ z_n = x_n \}$, which is  transversal to $\sS$ at $ x $ and lies in a metric ball of radius $2\tau^{k+3} $ with respect to the cone metric $\hat g_\beta$ in $\mathbb C$. 

We let 
\begin{equation}\label{bhat} \hat B =B_{\mathbb C}(x, (2\tau^{k+3})^{1/\beta})\subset \mathbb C \end{equation}
and view the equation \eqref{eqn:J2 2}  to be in $B_{\mathbb C}(x, (2\tau^{k+3})^{1/\beta})$. By Proposition \ref{gradpoisson}, we have in  $B_{\mathbb C}(x, (2\tau^{k+3})^{1/\beta}/2)\setminus\{0\}$,
\begin{equation*}
\ba{ \frac{\partial (D' h_k) }{\partial z_n}  } \le C \frac{\|(D' h_k)\|_{L^\infty(\hat B)}}{(2\tau^{k+3})^{1/\beta}} + C\|F\|_{L^\infty(\hat B)} (2\tau^{k+3})^{(2\beta - 1)/\beta}.
\end{equation*}
On the other hand, away from $\Ss$ it holds that
\begin{equation*}
\ba{ \frac{\partial (D' h_k)}{\partial z_n}  } = \ba{ \beta (r_n)^{1-\frac 1\beta}\frac{\partial (D' h_k)}{\partial r_n} - \sqrt{-1} (r_n)^{1-\frac 1\beta} \frac{\partial (D' h_k)}{ r_n \partial \theta_n}  }.
\end{equation*}
Therefore   in $B_{\mathbb C}(x, (2\tau^{k+3})^{1/\beta}/2)\setminus \{0\}$,
\begin{equation}\label{lemma 1}\begin{split}
& \ba{\frac{\partial (D' h_k)}{\partial r_n}  } + \ba{ \frac{\partial (D' h_k)}{r_n\partial \theta_n}   }\\
 \le &   \frac{C  (r_n)^{\frac 1\beta - 1} \| (D' h_k) \|_{L^\infty(\hat B)}}{(2\tau^{k+3})^{1/\beta}} + C (r_n)^{\frac 1\beta - 1}(2\tau^{k+3})^{(2\beta - 1)/\beta} \|F\|_{L^\infty(\hat B)} \\
\le &C (r_n)^{\frac 1\beta - 1}\tau^{k(1-\frac 1\beta)} \omega(2 \tau^k) + C\omega(2\tau^k) (r_n)^{\frac 1\beta -1 }\tau^{k(1-\frac 1\beta)}.
\end{split}
\end{equation}
Since $B_{\mathbb C}( x, (2\tau^{k+3})^{1/\beta}/2)= B_{\hat g_\beta}(x , 2^{1-\beta} \tau^{k+3})$,   
$$B_\beta(\tilde p, 2 \tau^{k+4}) \subset \cup_{x\in \sS \cap B_\beta(\tilde p, 2\tau^{k+3})} B_{\mathbb C}(x, (2\tau^{k+3})^{1/\beta}/2). $$ We complete the proof of Lemma \ref{lemma 2.31} by  combining  \eqref{lemma 1} and the above observation.

\end{proof}

\begin{lemma}\label{lemma 2.4}
There exists   $C=C(n,\beta)>0$ such that for all $2\leq k \leq \ell$, 
 \begin{equation}\label{eqnj22} 
 \left| \frac{\partial^2 (D' h_k)}{r_n^2 \partial (\theta_n)^2}   \right|(z) + \left| \frac{\partial^2 ( D' h_k) }{r_n\partial r_n\partial \theta_n}  \right| (z)\le C (r_n)^{\frac 1 \beta - 2} \tau^{-k(\frac 1 \beta  -1 )} \omega(\tau^k),
 \end{equation}
\begin{equation}\label{eqn:J2 3}
\Big|\frac{\partial^2 (D' h_k) }{\partial (r_n)^2 }  \Big|(z)  \le C  (r_n)^{\frac 1 \beta -2} \tau^{-k(\frac 1\beta - 1)}\omega( \tau^k).
\end{equation}
for all $z\in B_\beta(\tilde p,2\tau^{k+4})\backslash \sS$. 
\end{lemma}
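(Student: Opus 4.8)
\textbf{Proof proposal for Lemma \ref{lemma 2.4}.}

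The plan is to obtain the estimates \eqref{eqnj22} and \eqref{eqn:J2 3} for the second-order derivatives of $D'h_k$ in the transversal variables $(r_n,\theta_n)$ by differentiating the one-dimensional equation \eqref{eqn:J2 2} and invoking the first-order bound already established in Lemma \ref{lemma 2.31}. First I would note that, since $D'$ commutes with $\Delta_\beta$ and with $\partial_{\theta_n}$, the function $\partial_{\theta_n}(D'h_k)$ is again $g_\beta$-harmonic away from $\sS$ in the tangential directions in the sense that it satisfies an equation of the same type as \eqref{eqn:J2 2}. Thus I would apply Lemma \ref{lemma 2.31} to $\partial_{\theta_n}(D'h_k)$ in place of $D'h_k$: this directly yields the bound for $\tfrac{\partial^2(D'h_k)}{r_n\partial r_n\partial\theta_n}$ and $\tfrac{\partial^2(D'h_k)}{r_n^2\partial(\theta_n)^2}$ appearing in \eqref{eqnj22}, because applying $\partial_{\theta_n}$ costs no extra power of $r_n$ in these weighted derivatives. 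The key point is that $\partial_{\theta_n}$ preserves the structure of the Poisson equation on each transversal disk $\hat B$, so the same scaling argument through Proposition \ref{gradpoisson} goes through verbatim, giving the homogeneity $(r_n)^{1/\beta-2}\tau^{-k(1/\beta-1)}\omega(\tau^k)$.

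For the purely radial second derivative \eqref{eqn:J2 3}, the plan is different: I would solve for $\tfrac{\partial^2(D'h_k)}{\partial(r_n)^2}$ algebraically from the expression \eqref{eqn:Lap} for the conical Laplacian. Writing out $\Delta_\beta(D'h_k)=F$ using \eqref{eqn:Lap}, one has
\begin{equation*}
\frac{\partial^2(D'h_k)}{\partial(r_n)^2} = F - \frac{1}{r_n}\frac{\partial(D'h_k)}{\partial r_n} - \frac{1}{\beta^2(r_n)^2}\frac{\partial^2(D'h_k)}{\partial(\theta_n)^2}.
\end{equation*}
Then I would bound the three terms on the right individually: the term $F$ is controlled by \eqref{eqn:J2 1} (which gives $|F|\le C\tau^{-k}\omega(2\tau^k)$, and since $\beta\in(1/2,1)$ one checks $\tau^{-k}\le C(r_n)^{1/\beta-2}\tau^{-k(1/\beta-1)}$ up to the geometric constraints $r_n\lesssim\tau^k$ inside the relevant ball, so this term is absorbed); the middle term $\tfrac{1}{r_n}\partial_{r_n}(D'h_k)$ is controlled by Lemma \ref{lemma 2.31}, which contributes $(r_n)^{1/\beta-2}\tau^{-k(1/\beta-1)}\omega(\tau^k)$ after dividing by $r_n$; and the last term is exactly the quantity just estimated in \eqref{eqnj22}, again of the same homogeneity. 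Collecting the three contributions gives \eqref{eqn:J2 3}.

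The main obstacle I anticipate is bookkeeping the powers of $r_n$ and $\tau^k$ consistently across the three terms so that they all collapse to the single homogeneity $(r_n)^{1/\beta-2}\tau^{-k(1/\beta-1)}$. In particular, the $F$-term carries the ``flat'' scaling $\tau^{-k}$ coming from the tangential Laplacian in \eqref{eqn:J2 2}, whereas the radial and angular terms carry the genuinely conical weight $(r_n)^{1/\beta-2}$; reconciling these requires using that on the transversal disk $\hat B=B_{\mathbb C}(x,(2\tau^{k+3})^{1/\beta})$ one has $r_n\le C\tau^k$, so that $(r_n)^{1/\beta-2}\ge c\,\tau^{k(1/\beta-2)}$ and hence $(r_n)^{1/\beta-2}\tau^{-k(1/\beta-1)}\ge c\,\tau^{-k}$, confirming that the flat term is dominated. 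The constraint $\beta>1/2$ is essential here precisely to keep the exponent $1/\beta-2$ negative and the comparisons in the correct direction; this is where the restriction on $\beta$ enters, consistent with the remark in the introduction that $\beta\in(0,1/2]$ is easier.
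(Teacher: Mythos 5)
Your proposal is correct and takes essentially the same route as the paper: estimate \eqref{eqnj22} is obtained by running the one-dimensional transversal Poisson estimate of Proposition \ref{gradpoisson} on the $g_\beta$-harmonic function $\partial_{\theta_n}D'h_k$ (the paper does exactly this, supplying the needed input $\|\partial_{\theta_n}D'h_k\|_{L^\infty}\lesssim \tau^k\omega(\tau^k)$ via the interior gradient estimate, a point your ``goes through verbatim'' claim leaves implicit but which does hold), and \eqref{eqn:J2 3} is then read off algebraically from the Laplacian identity \eqref{eqn:Lap}, combining \eqref{eqn:J2 1}, Lemma \ref{lemma 2.31} and \eqref{eqnj22}. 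Your explicit verification that the flat term $\tau^{-k}\omega(\tau^k)$ coming from $F$ is dominated, using $r_n\lesssim\tau^k$ together with $1/\beta-2<0$ for $\beta\in(1/2,1)$, spells out a comparison the paper leaves to the reader.
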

\begin{proof}
Applying the gradient estimate to the $g_\beta$-harmonic function $D' h_k$, we have
\begin{equation*}
\|\nabla_{g_\beta} D' h_k \|_{L^\infty\xk{B_\beta(\tilde p, 2 \tau^{k+1})  \backslash \sS}} \le C(n) \omega(2 \tau^k).
\end{equation*}
This implies
$$\Big\| \frac{\partial (D' h_k) }{r_n \partial \theta_n}   \Big\|_{L^\infty\xk{ B_\beta(\tilde p, 2 \tau^{k+1})\backslash \sS  }} \le C(n)\omega(2 \tau^k).$$
Since  $\partial_{\theta_n} D' h_k$ is  continuous and $g_\beta$-harmonic in $B_\beta(\tilde p, 2\tau^{k+1})$, we define $G$ by
$$G  = |z_n|^{2(1-\beta)}\frac{\partial^2 (\partial_{\theta_n} D' h_k ) }{\partial z_n\partial \bar z_n} = -\sum_{j=1}^{2n-2} (D_j)^2 \partial_{\theta_n} D' h_k.$$
Since 
$$\| G\|_{L^\infty(B_\beta(\tilde p, 2 \tau^{k+1}))}\le C(n) \tau^{-k}\omega(2 \tau^k),$$
  it follows from Proposition  \ref{gradpoisson} that on $B_{\mathbb C}\xk{  x, (2 \tau^{k+3})^{1/\beta} /2   }\setminus \{0\}$  by the same choice of $x$ as in the proof of Lemma \ref{lemma 2.31} that 
  \begin{align*}
\Big|\frac{\partial (\partial_{\theta_n} D' h_k ) }{\partial z_n}  \Big|&\le C  \frac{\|  \partial_{\theta_n} D' h_k \|_{L^\infty(\hat B)}}{(2\tau^{k+3})^{1/\beta}} + C \| G \|_{L^\infty(\hat B)} (2\tau^{k+3})^{(2\beta - 1)/\beta},
\end{align*}
where $\hat B$ is defined in (\ref{bhat}).
Equivalently, on $B_{\mathbb C}\xk{ x, (2 \tau^{k+3})^{1/\beta} /2  }\setminus \{0\}$, 
\begin{equation}\label{lemma 2}\begin{split}
& \left| \frac{\partial^2 (D' h_k)}{r_n \partial (\theta_n)^2}  \right|  +  \left| \frac{\partial^2 (D' h_k)}{\partial r_n \partial \theta_n}   \right|   \\
 \le &  C \frac{ (r_n)^{\frac 1\beta - 1} \| \partial_\theta D' h_k  \|_{L^\infty(\hat B)}}{(2\tau^{k+3})^{1/\beta}} + C(r_n)^{\frac 1\beta - 1}(2\tau^{k+3})^{(2\beta - 1)/\beta} \| G \|_{L^\infty(\hat B)}\\
\le & C (r_n)^{\frac 1\beta - 1}\tau^{k(1-\frac 1\beta)} \omega(2 \tau^k).
\end{split}
\end{equation}
 We have now completed the proof of the estimate (\ref{eqnj22}).

We now use (\ref{eqnj22}) to show (\ref{eqn:J2 3}).
From equation \eqref{eqn:J2 2}, we have
\begin{equation}\label{eqn:J2 Laplacian}
\frac{\partial^2 (D' h_k) }{\partial (r_n)^2}  = -\frac{1}{r_n}\frac{\partial (D' h_k) }{\partial r_n} - \frac{1}{\beta^2 (r_n) ^2}\frac{\partial ^2 (D' h_k) ) }{\partial (\theta_n)^2}   + F.
\end{equation}
%
Then (\ref{eqn:J2 3}) is proved by combining with (\ref{eqnj22}), Lemma \ref{lemma 2.31} and the estimate \eqref{eqn:J2 1} on $F$.

\end{proof}

\begin{lemma}  Let $p,q\in B_\beta(0,1/2)\backslash \sS$ and $d= d_\beta(p, q)$ for some $\beta\in (1/2, 1)$.  There exists $C=C(n,\beta)>0$ such that for all $2\le k \le \ell$, 
\begin{equation}\label{eqn:main 1} 
\ba{ \partial_{r_n} D' h_k (p) -\partial_{r_n} D' h_k (q)   }\le C d^{\frac 1 \beta - 1} \tau^{-k(\frac 1\beta -1)} \omega(  \tau^k),
\end{equation}
\begin{equation}\label{eqn:main 1'} 
\ba{ \left( (r_n)^{-1} \partial_{\theta_n} D' h_k \right) (p) - \left( (r_n)^{-1} \partial_{\theta_n} D' h_k \right) (q)   }\le C d^{\frac 1 \beta - 1} \tau^{-k(\frac 1\beta -1)} \omega(  \tau^k) .
\end{equation}

\end{lemma}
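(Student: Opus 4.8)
The plan is to interpolate between the pointwise bound of Lemma~\ref{lemma 2.31} and the first--order bounds of Lemma~\ref{lemma 2.4}, the choice of tool depending on the size of $d=d_\beta(p,q)$ relative to the distances of $p,q$ to $\sS$. Set $\alpha=\tfrac1\beta-1\in(0,1)$. Since the fibers of $\mathbb{C}^n\to\mathbb{C}^{n-1}$ are cones and $d_\beta(z,\sS)=|z_n|^\beta=r_n$, we have $r_n(p)=r_p$ and $r_n(q)=r_q$. Lemma~\ref{lemma 2.31} bounds each of $\partial_{r_n}D'h_k$ and $(r_n)^{-1}\partial_{\theta_n}D'h_k$ pointwise by $C(r_n)^{\alpha}\tau^{-k\alpha}\omega(\tau^k)$, while Lemma~\ref{lemma 2.4} (with Lemma~\ref{lemma 2.31}) bounds their $r_n$-- and $\theta_n$--derivatives by $C(r_n)^{\alpha-1}\tau^{-k\alpha}\omega(\tau^k)$. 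The target \eqref{eqn:main 1}--\eqref{eqn:main 1'} carries the same fractional exponent $\alpha$, which dictates the two--case structure below.

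In the standing case $r_p=\min\{r_p,r_q\}\le 2d$, the triangle inequality gives $r_q\le r_p+d\le 3d$, so both points lie within $O(d)$ of $\sS$ and the pointwise bound already suffices. Indeed, since both $\partial_{r_n}D'h_k$ and $(r_n)^{-1}\partial_{\theta_n}D'h_k$ vanish to order $(r_n)^{\alpha}$ at $\sS$,
\[
\ba{\partial_{r_n}D'h_k(p)-\partial_{r_n}D'h_k(q)}\le \ba{\partial_{r_n}D'h_k(p)}+\ba{\partial_{r_n}D'h_k(q)}\le C\,(r_p^{\alpha}+r_q^{\alpha})\,\tau^{-k\alpha}\omega(\tau^k)\le C\,d^{\alpha}\tau^{-k\alpha}\omega(\tau^k),
\]
which is \eqref{eqn:main 1}; \eqref{eqn:main 1'} follows verbatim with $(r_n)^{-1}\partial_{\theta_n}$ in place of $\partial_{r_n}$.

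In the complementary case $r_p>2d$, treated in parallel, the first--order bounds of Lemma~\ref{lemma 2.4} are the operative tool, since now the minimal $g_\beta$--geodesic $\gamma$ from $p$ to $q$ avoids $\sS$ (geodesic convexity of $(\mathbb{C}^n\setminus\sS,g_\beta)$) and stays at distance $\ge r_p-d\ge r_p/2$ from $\sS$, so $r_n\gtrsim r_p$ along $\gamma$. Integrating the gradient bound along $\gamma$, whose $g_\beta$--length is $d$, gives $\ba{\partial_{r_n}D'h_k(p)-\partial_{r_n}D'h_k(q)}\le C\,d\,r_p^{\alpha-1}\tau^{-k\alpha}\omega(\tau^k)$, and since $d<r_p/2$ and $\alpha<1$ one has $d\,r_p^{\alpha-1}=d^{\alpha}(d/r_p)^{1-\alpha}\le d^{\alpha}$, again yielding the claim. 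To integrate the full metric gradient of $\partial_{r_n}D'h_k$ one also needs the tangential component $\partial_{s_i}\partial_{r_n}D'h_k=\partial_{r_n}(D_iD'h_k)$, which is not given by Lemma~\ref{lemma 2.4}; this is obtained by applying Lemma~\ref{lemma 2.31} to the $g_\beta$--harmonic function $(D')^2h_k$ and checking, via $\alpha-\tfrac1\beta=-1$ and $r_n\lesssim\tau^k$ on $B_\beta(\tilde p,2\tau^k)$, that the resulting bound is dominated by $C(r_n)^{\alpha-1}\tau^{-k\alpha}\omega(\tau^k)$.

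The essential difficulty, and the reason Lemma~\ref{lemma 2.31} and Lemma~\ref{lemma 2.4} are both needed, is that the exponent $\alpha=\tfrac1\beta-1$ is fractional for $\beta\in(1/2,1)$: a naive integration of the gradient produces only the Lipschitz power $d^{1}$, which is too weak, while the gradient bound $(r_n)^{\alpha-1}$ blows up at $\sS$ and so cannot be integrated along a geodesic that passes near $\sS$. The fractional gain must instead come from the vanishing rate $(r_n)^{\alpha}$ of these transversal second derivatives, exploited near $\sS$ through the smallness of $r_p,r_q$ and away from $\sS$ through the smallness of $d$ compared with $r_p$. The most delicate points are the quantitative control of how close $\gamma$ approaches $\sS$ in the far case, and the treatment of the mixed tangential--transversal derivative that does not appear in Lemma~\ref{lemma 2.4}.
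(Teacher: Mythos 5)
Your near case ($r_p=\min\{r_p,r_q\}\le 2d$) is correct, and it is genuinely simpler than what the paper does: since Lemma \ref{lemma 2.31} shows that both $\partial_{r_n}D'h_k$ and $(r_n)^{-1}\partial_{\theta_n}D'h_k$ vanish at the rate $(r_n)^{\frac1\beta-1}$ near $\sS$, the crude triangle inequality together with $r_n(p)=r_p\le 2d$ and $r_n(q)=r_q\le 3d$ gives \eqref{eqn:main 1} and \eqref{eqn:main 1'} at once. The paper instead builds the three-segment path $p\to p'\to q'\to q$ (varying $\theta_n$, then $r_n$, then $s$) and integrates the second-derivative bounds of Lemma \ref{lemma 2.4} along each leg, using $|a^\kappa-b^\kappa|\le|a-b|^\kappa$ on the radial leg; your shortcut avoids all of this. (Both your argument and the paper's carry the same harmless constant-factor slack as to whether the relevant points lie inside the ball $B_\beta(\tilde p,2\tau^{k+4})$ on which Lemmas \ref{lemma 2.31} and \ref{lemma 2.4} are stated; the radii there can be enlarged with only a change of constants.) Your far-case computation for $k\le k_p$ is also fine, and integrating with $r_n\ge r_p/2$ along the geodesic, then using $d\,r_p^{\frac1\beta-2}\le d^{\frac1\beta-1}$, is if anything slightly sharper than the paper's use of $r_n\ge d$.

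The genuine gap is in the far case for $k_p<k\le\ell$, which occurs precisely when $d$ is small compared with $r_p$ (the paper's third case). Your argument invokes Lemmas \ref{lemma 2.31} and \ref{lemma 2.4} ``on $B_\beta(\tilde p,2\tau^k)$'' for all $2\le k\le\ell$, but for $k> k_p$ the function $h_k=u_{k-1}-u_k$ is harmonic only on $B_\beta(p,\tau^k)$, coming from the Dirichlet problems \eqref{eqn:diri 2.4} on balls centered at $p$, not at $\tilde p\in\sS$ as in \eqref{eqn:diri 2.41}. That ball is disjoint from $\sS$ and is not contained in any $B_\beta(\tilde p,c\tau^k)$: there $r_p>\tau^{k_p}>\tau^k$, so your inequality ``$r_n\lesssim\tau^k$'' fails — in fact $r_n\gtrsim r_p\gg\tau^k$ throughout the region of integration. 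More fundamentally, the proofs of Lemmas \ref{lemma 2.31} and \ref{lemma 2.4} run through the one-dimensional Poisson estimate of Proposition \ref{gradpoisson} on discs centered at the cone point of each $\{s=\mathrm{const}\}$ slice, and no such disc fits inside $B_\beta(p,\tau^k)$, so those lemmas cannot be extended to this range of $k$. The paper handles it by a different mechanism: on $B_\beta(p,\tau^k)$ one takes a single-valued branch $w=(z_n)^\beta$, in which $g_\beta$ is Euclidean, and standard interior estimates for harmonic functions give $|\nabla_{g_\beta}\partial_{r_n}D'h_k|_{g_\beta}\le C\left(r_n^{-1}+\tau^{-k}\right)\omega(\tau^k)$ (see \eqref{eqn:gradient 2nd}); integrating along the minimal geodesic, on which $r_n\ge Cr_p$, then yields \eqref{eqn:main 1} since $d/r_p\le d/\tau^k\le(d\tau^{-k})^{\frac1\beta-1}$ and $d\tau^{-k}\le(d\tau^{-k})^{\frac1\beta-1}$. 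Without this step (or some equivalent derivation of gradient bounds for $h_k$ with $k>k_p$ away from $\sS$), your proof establishes the lemma only for $k\le\min\{\ell,k_p\}$. Your treatment of the mixed tangential--transversal derivative by applying Lemma \ref{lemma 2.31} to the $g_\beta$-harmonic function $(D')^2h_k$ is correct as far as it goes — the exponent bookkeeping via $\left(\frac1\beta-1\right)-\frac1\beta=-1$ checks out — but it inherits the same restriction to $k\le k_p$.
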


\begin{proof} We first prove \eqref{eqn:main 1}.  Let $p=(s(p); r_n (p),\theta_n(p))$ and $q=(s(q) ; r_n(q),\theta_n (q))$, where $s=(s_1, ..., s_{2n-2})$ and $r_n = |z_n|^\beta$, $\theta_n = \arg z_n$. We choose a minimal $g_\beta$-geodesic $\gamma(t) = \xk{s(t); r_n(t),\theta_n(t)}$ for $t\in [0,d]$ connecting $p$ and $q$. Then by definition, 
\begin{equation}\label{eqn:geodesic}
|\gamma'(t)|^2_{g_\beta}=\abs{s'(t)} + (r_n'(t))^2 + \beta^2 r_n(t)^2 (\theta_n'(t))^2 = 1,
\end{equation}
and obviously
$
|s(p) - s(q)|\le d,\quad |r_n(p) - r_n(q)|\le d.
$
\begin{enumerate}

\item   $r_p\le 2d$.  We will construct a piecewise smooth path $\gamma=\gamma_1+\gamma_2 + \gamma_3$ joining $p$ and $q$ instead of a minimal geodesic.
 Let 
 $$q' = \xk{s(p); r_n(q),\theta_n(q)}, \quad p' = \xk{s(p); r_n(p),\theta_n (q)}$$
and let $\gamma_1$, $\gamma_2$, $\gamma_3$ be the minimal geodesics joining $q$ and $q'$,  $q'$ and $p'$,  $p'$ and $p$ respectively (see Figure 1).   
\begin{figure}[h]
\includegraphics[scale=0.4]{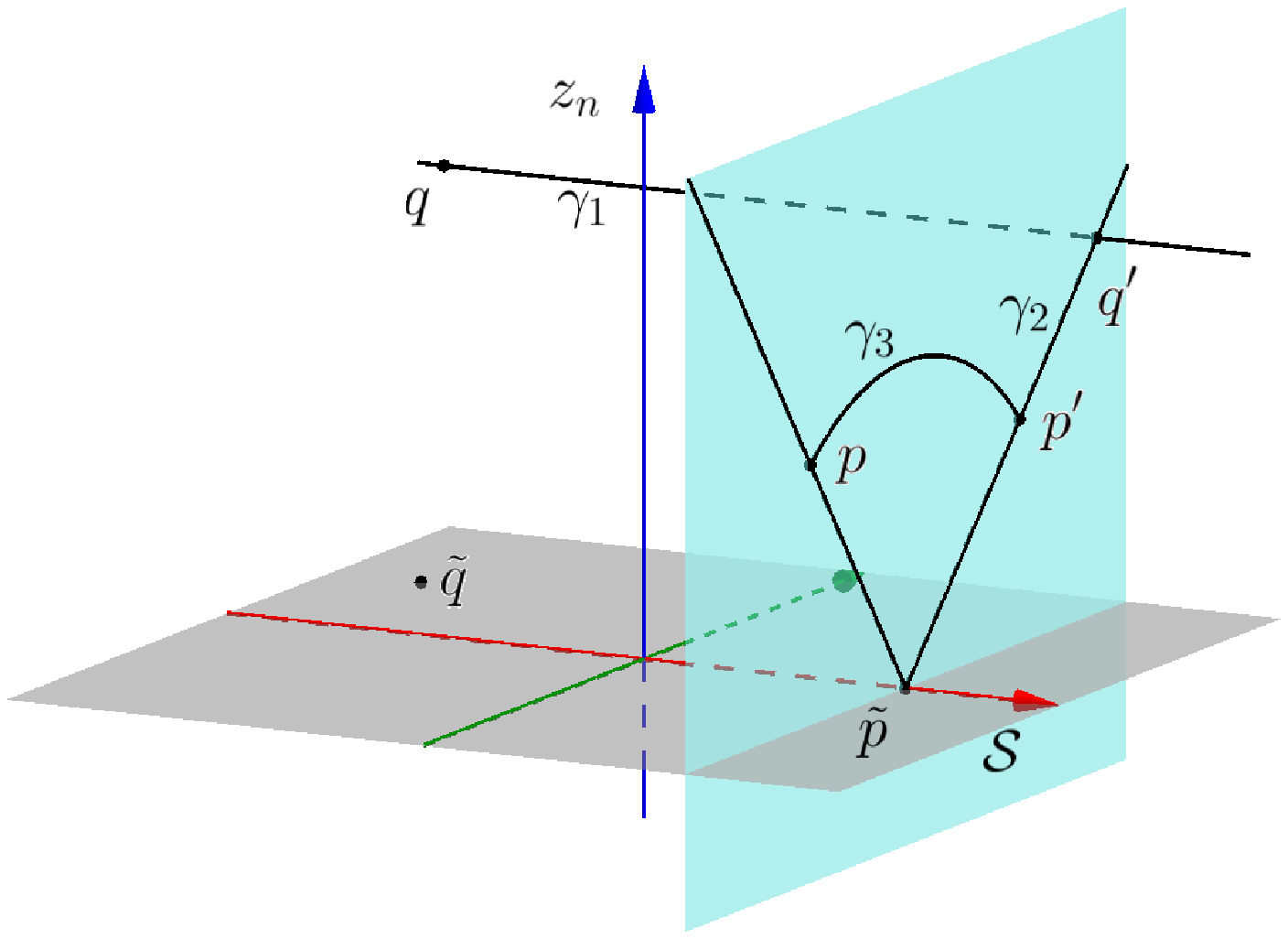}
\caption{}\label{fig:2}
\end{figure}

By the triangle inequality,
\begin{equation}\label{eqn:J2 main1}
\begin{split}
&|\partial_{r_n} D' h_k (p) - \partial_{r_n} D' h_k(q) | \\
\le & | \partial_{r_n} D' h_k (p) - \partial_{r_n} D' h_k (p')| + | \partial_{r_n} D' h_k (p') - \partial_{r_n} D' h_k (q')|       \\
& + | \partial_{r_n} D' h_k(q') - \partial_{r_n} D' h_k (q)|. %
\end{split}
\end{equation} 
Along  $\gamma_3(\theta)$, we have (for notation convenience we write below $h_{k,ir_n} = \partial_{r_n} D'_i h_k$)
\begin{equation}\label{eqn:main 11}
| \partial_{r_n} D' h_k (p) -\partial_{r_n} D' h_k (p')| = \left| \int_{\theta_n(p)}^{\theta_n(q)} \frac{\partial h_{k,ir_n}}{\partial \theta_n} d\theta_n \right| \le C r_n(p)^{\frac 1 \beta - 1}\tau^{-k(\frac 1\beta -1)}\omega(2 \tau^k). 
\end{equation}
Along $\gamma_2(t)$, we have
\begin{equation}\label{eqn:main 12}\begin{split}
& |\partial_{r_n} D' h_k (p') - \partial_{r_n} D' h_k(q')|  = \ba{ \int_{r_n(p)}^{r_n(q)} \frac{\partial h_{k,ir_n}}{\partial r_n}  dt  }\\
\le & C \tau^{-k(\frac 1\beta -1)}\omega(2\tau^k)\int_{r_n(p)}^{r_n(q) } t^{\frac 1 \beta -2 }dt\\
\le & C\tau^{-k(\frac 1\beta -1)}\omega(2 \tau^k) \ba{ r_n(q)^{\frac 1\beta -1} - r_n(p)^{\frac 1 \beta -1}    }\\
 \le & C \tau^{-k(\frac 1\beta -1)}\omega(2 \tau^k)\ba{r_n(q) - r_n(p)}^{\frac 1\beta -1}\\
\le & C \tau^{-k(\frac 1\beta -1)}\omega(2 \tau^k) d^{\frac 1\beta -1},
\end{split}\end{equation}
where we make use of  the elementary inequality that   for any $a,b>0$ and $\kappa \in (0,1)$, 
$$\ba{ a^\kappa-b^\kappa  }\le |a-b|^\kappa.$$
Along  $\gamma_1(t)$,
\begin{equation}\label{eqn:main 13} \begin{split}
| \partial_{r_n} D' h_k (q) - \partial_{r_n} D' h_k (q')| \le & \ba{  (D')^2\partial_{r_n} h_k    } d \\
\le & C  \tau^{-k} \omega(2 \tau^k) d \\
\le & C \tau^{-k(\frac 1\beta - 1)} d^{\frac 1\beta - 1}\omega(2\tau^k).
\end{split}
\end{equation}

\item $r_p\ge 2d$ and $\ell  \leq k_p$.   
This case is relatively easier. By the triangle inequality, $r_n(t) = r_n(\gamma(t))\ge d$ for all $t\in [0,d]$. Along $\gamma(t)$,
\begin{eqnarray*}
&&\ba{\nabla_{g_\beta} (\partial_{r_n} D' h_k )}_{g_\beta} \\
&=&\bk{ \sum_{j=1}^{2n-2}\bk{\frac{\partial   (\partial_{r_n} D' h_k) } {\partial s_j}}^2 + \bk{\frac{\partial  (\partial_{r_n} D' h_k)  }{\partial r_n}}^2 + \frac{1}{\beta^2 r_n(t)^2}\bk{\frac{\partial   (\partial_{r_n} D' h_k) }{\partial\theta_n}}^2}^{1/2}\\
&\le & C(n)\tau^{-k(\frac 1\beta -1)}\omega(2\tau^k) \bk{r_n(t)^{\frac 1\beta -2} }\\
&\le & C(n)\tau^{-k(\frac 1\beta -1)}\omega(2\tau^k)  d^{\frac 1\beta -2}.
\end{eqnarray*}
Integrating along the geodesic $\gamma$, we obtain the following desired estimate
\begin{equation*}
 \ba{  (\partial_{r_n} D' h_k) (p) -  (\partial_{r_n} D' h_k) (q)   }\le C(n)d^{\frac 1 \beta - 1} \tau^{-k(\frac 1\beta -1)} \omega(2 \tau^k).
\end{equation*}

\medskip

\item  $r_p\ge 2d$ and $\ell > k_p$.  In this case,  we will replace $(s, z_n)$ by $(s, w)$ for $w= (z_n)^\beta$ in $B_\beta(p,\tau^k)$, by taking a single-value branch, when $k_p< k\le \ell $. The cone metric $g_\beta$ becomes the standard Euclidean metric in $(s, w)$  and 
$$\Delta_{g_\beta}(D' h_k)  = \sum_{j=1}^{2n-2} \frac{\partial^2 (D'h_k)}{\partial (s_j)^2} + \sqrt{-1} \frac{\partial^2 (D' h_k) }{\partial w\partial \overline w} = 0,\text{ in }B_\beta(p,\tau^{k+1}).$$
 It then follows from the derivative estimates for Euclidean harmonic functions that
\begin{eqnarray*}\label{eqn:2nd good pt}
&&\left\| \frac{\partial (D'h_k)}{\partial w}\right\|_{L^\infty(B_\beta(p,\tau^{k+2}))}\le  C \omega(\tau^k), \\
&& \left\| \frac{\partial^2 (D'h_k) }{\partial w\partial w} \right\|_{L^\infty(B_\beta(p,\tau^{k+2}))} +  \left\| \frac{\partial^2 (D'h_k) }{\partial w\partial \overline w} \right\|_{L^\infty(B_\beta(p,\tau^{k+2}))} \le  C\tau^{-k} \omega(\tau^k).
\end{eqnarray*} 
Since
$$2w \frac{\partial}{\partial w} = r_n\frac{\partial }{\partial r_n} - \frac{\sqrt{\smash[h]{-1}}}{\beta}\frac{\partial }{\partial \theta_n}, \quad 
\frac{\partial }{\partial r_n} = \frac{w}{r_n}\frac{\partial  }{\partial w} + \frac{\overline w}{r_n} \frac{\partial  }{\partial \overline w} , $$
we have (denote $h_{k,i} = D'_i h_k$)
\begin{align*}
\frac{\partial }{\partial w}\bk{ \frac{\partial h_{k,i}}{\partial r_n}  } = & \frac{1}{r_n}\frac{\partial h_{k,i}}{\partial w} - \frac{w}{r_n^2}\frac{\overline w}{2 r_n} \frac{\partial h_{k,i}}{\partial w} + \frac{w}{r_n}\frac{\partial^2 h_{k,i}}{\partial w^2} \\
& -\frac{\overline w}{r_n^2} \frac{\overline w}{2r_n}\frac{\partial h_{k,i}}{\partial \overline w} + \frac{\overline w}{r_n}\frac{\partial^2 h_{k,i}}{\partial w\partial \overline w}.
\end{align*}
Therefore
\begin{equation}\label{eqn:gradient 2nd}\begin{split}
&\Big|\nabla \frac{\partial (D'h_k) }{\partial r_n}  \Big|_{g_\beta} \\
\le & C\Big(\frac{1}{r_n} \ba{ \frac{ (D'h_k) }{\partial w}} + \ba{\frac{\partial^2 (D'h_k) }{\partial w\partial w}} + \frac{1}{r_n} \ba{ \frac{\partial (D'h_k) }{\partial  w}} + \ba{\frac{\partial^2 (D'h_k) }{\partial w\partial \overline w}}+ \tau^{-k}\omega(\tau^k)  \Big) \\
\le & C \frac{1}{r_n} \omega(\tau^k) + C \tau^{-k}\omega(\tau^k).
\end{split}\end{equation}
Let $\gamma$ be the minimal geodesic connecting $p$ and $q$ with respect to $g_\beta$. Then along $\gamma$, there exists $C>0$ such that 
$$r_n(\gamma(t)) \geq C r_p . $$
After integrating along $\gamma$,  it follows that
\begin{equation*}
\left| \frac{\partial (D'h_k) }{\partial r_n}(p) - \frac{\partial  (D'h_k)}{\partial r_n}(q)   \right| \le C(n,\beta)d^{\frac 1 \beta - 1} \tau^{-k(\frac 1 \beta  -1)} \omega(\tau^k).
\end{equation*}
\end{enumerate}
The estimate \eqref{eqn:main 1} is then proved by combining the above three cases.  \eqref{eqn:main 1'} can be proved by similar argument.

\end{proof}

\begin{corr} \label{j2cor} There exists $C=C(n, \beta)>0$ such that 
\begin{equation} \label{j2}
\begin{split}
& \ba{ \partial_{r_n} D'   u_\ell (p) -  \partial_{r_n} D'   u_\ell (q)  }  +  \ba{ \left( (r_n)^{-1}\partial_{\theta_n} D'   u_\ell \right) (p) -  \left( (r_n)^{-1}\partial_{\theta_n} D'   u_\ell \right)   (q)  }  \\
\leq &C d^{\frac 1\beta - 1}\xk{ \|u\|_{L^\infty(B_\beta(0,1))} + \sum_{k=2}^\ell \tau^{-k(\frac 1\beta -1)}\omega( \tau^k)  } .
\end{split}
\end{equation}

\end{corr}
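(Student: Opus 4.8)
The goal of Corollary \ref{j2cor} is to pass from the per-level difference estimates \eqref{eqn:main 1} and \eqref{eqn:main 1'} for the harmonic increments $h_k = u_{k-1} - u_k$ to a difference estimate for the full harmonic approximant $u_\ell$ at the two points $p$ and $q$. The plan is to telescope: since $u_\ell = u_2 + \sum_{k=3}^{\ell} (u_k - u_{k-1}) = u_2 - \sum_{k=3}^{\ell} h_k$, the differences $\partial_{r_n} D' u_\ell(p) - \partial_{r_n} D' u_\ell(q)$ and its $\theta_n$-counterpart split as a base term coming from $u_2$ plus a sum over $k$ of the increment differences already bounded in \eqref{eqn:main 1} and \eqref{eqn:main 1'}.

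First I would treat the summed increment part. Applying \eqref{eqn:main 1} and \eqref{eqn:main 1'} directly gives, for each $3 \le k \le \ell$,
\begin{equation*}
\ba{ \partial_{r_n} D' h_k(p) - \partial_{r_n} D' h_k(q) } + \ba{ \left( (r_n)^{-1}\partial_{\theta_n} D' h_k\right)(p) - \left( (r_n)^{-1}\partial_{\theta_n} D' h_k\right)(q) } \le C d^{\frac 1\beta - 1}\tau^{-k(\frac 1\beta - 1)}\omega(\tau^k),
\end{equation*}
so that summing over $k$ contributes $C d^{\frac 1\beta - 1}\sum_{k=2}^\ell \tau^{-k(\frac 1\beta -1)}\omega(\tau^k)$, which is exactly the sum appearing on the right-hand side of \eqref{j2} (after using $\omega(2\tau^k)\le 2\omega(\tau^k)$).

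Next I would handle the base term $u_2$. As in the tangential case (the treatment of $I_2$ in Proposition \ref{tander}), the point is that $u_2$ is controlled purely by the $L^\infty$-norm of $u$: from the $C^0$-bound $\|u_2 - u\|_{L^\infty} \le C\tau^4\omega(2\tau^2)$ one gets $\|u_2\|_{L^\infty(B_\beta(\tilde p, 2\tau^2))} \le C(\|u\|_{L^\infty(B_\beta(0,1))} + \omega(2\tau^2))$. Then, exactly as in Lemmas \ref{lemma 2.31} and \ref{lemma 2.4}, I would apply the interior derivative estimates for the $g_\beta$-harmonic function $D' u_2$ together with the one-dimensional Poisson gradient bound of Proposition \ref{gradpoisson} on the transversal slices to obtain $(r_n)$-weighted bounds on $\partial_{r_n} D' u_2$ and $(r_n)^{-1}\partial_{\theta_n} D' u_2$ and their derivatives on a slightly smaller ball. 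Integrating these along the same piecewise path $\gamma_1 + \gamma_2 + \gamma_3$ (or the minimal geodesic, according to the case $r_p\le 2d$ or $r_p > 2d$) used in the preceding lemma yields
\begin{equation*}
\ba{ \partial_{r_n} D' u_2(p) - \partial_{r_n} D' u_2(q) } + \ba{ \left((r_n)^{-1}\partial_{\theta_n} D' u_2\right)(p) - \left((r_n)^{-1}\partial_{\theta_n} D' u_2\right)(q) } \le C d^{\frac 1\beta - 1}\|u\|_{L^\infty(B_\beta(0,1))},
\end{equation*}
which is the $\|u\|_{L^\infty}$-term in \eqref{j2}. Adding the base and increment contributions and invoking Lemma \ref{lemma 2.10 1} to identify $\lim_{k}$ of the approximants with the genuine derivatives of $u_\ell$ completes the estimate.

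The main obstacle I anticipate is not the telescoping bookkeeping but the base-term estimate for $u_2$: one must re-derive the transversal weighted gradient bounds of Lemma \ref{lemma 2.31} with $h_k$ replaced by $u_2$, being careful that the right-hand side $F$ is now controlled by $\|u\|_{L^\infty}$ (via $\Delta_\beta u_2 = f(p)$, which we may take to vanish) rather than by $\omega(\tau^k)$, and that the integration path must respect the case distinction $r_p \le 2d$ versus $r_p > 2d$ so that $r_n(t)$ stays bounded below along $\gamma$. Once this is in place, the two contributions combine cleanly to give \eqref{j2}.
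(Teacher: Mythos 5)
Your proposal is correct and takes essentially the same route as the paper, whose proof is just a pointer back to the $I_2$ argument of Proposition \ref{tander}: telescope $u_\ell = u_2 - \sum_{k=3}^{\ell} h_k$, bound the increments via \eqref{eqn:main 1} and \eqref{eqn:main 1'}, and control the base term $u_2$ through its $L^\infty$-bound together with the weighted transversal derivative estimates integrated along the same paths, exactly as you describe. The only superfluous step is your final appeal to Lemma \ref{lemma 2.10 1}: the telescoping sum is finite and terminates at $u_\ell$ itself, so no limit identification is needed.
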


\begin{proof} The Corollary  follows from  \eqref{eqn:main 1} by similar argument in the proof to estimate $I_2$ in Proposition  \ref{tander}.  
 
\end{proof}

The following proposition is the main result in this subsection. 

\begin{prop} \label{mixder} Let $\beta \in (1/2, 1)$. There exists $C=C(n, \beta)>0$ such that for all $p,q\in B_\beta(0,1/2)\backslash \sS$,
\begin{equation}\label{transres}\begin{split}
&  \sum_{i=2n-1}^{ 2n} \sum_{j=1}^{2n-2} \left| D_iD_j u(p) - D_iD_j u(q) \right|       \\
 \leq & C  \left( d^{\frac{1}{\beta}-1} \sup_{B_\beta(0,1)} |u| + \int_0^d \frac{\omega( r)}{r} dr + d^{\frac{1}{\beta} -1} \int_d^1  \frac{\omega(  r)}{r^{1/\beta}} dr \right), %
\end{split}\end{equation}
where $d= d_\beta(p, q). $

\end{prop}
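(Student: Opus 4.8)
The plan is to assemble the pointwise bound directly from the pieces already prepared. Since $D_{2n-1}D_j u=\partial_{r_n}D'_j u$ and $D_{2n}D_j u=(r_n)^{-1}\partial_{\theta_n}D'_j u$, the left-hand side of \eqref{transres} is controlled by the quantities $J$ and $K$ of \eqref{eqn:goal} summed over the finitely many indices $j=1,\ldots,2n-2$, so it suffices to bound $J$ and $K$ and absorb the index sum into the constant $C(n,\beta)$. I would use the decomposition $J\le J_1+J_2+J_3+J_4$ together with Lemma \ref{j143}, which gives $J_1+J_4\le C\sum_{k=\ell}^\infty\omega(\tau^k)$ and $J_3\le C\omega(\tau^\ell)$, and Corollary \ref{j2cor}, which gives
\[
J_2\le Cd^{\frac1\beta-1}\Big(\|u\|_{L^\infty(B_\beta(0,1))}+\sum_{k=2}^\ell\tau^{-k(\frac1\beta-1)}\omega(\tau^k)\Big).
\]
The identical decomposition and estimates apply to $K$ via \eqref{eqn:main 1'}, so the whole argument reduces to converting these three discrete sums into the integral quantities on the right of \eqref{transres}.

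The core of the remaining work is three sum-to-integral comparisons, all exploiting that $\omega$ is nondecreasing, satisfies the doubling bound $\omega(2r)\le 2\omega(r)$, and that $\tau^\ell$ is comparable to $d$ by the choice \eqref{eqn:fix ell}. First, comparing each summand with an integral over $[\tau^{k+1},\tau^k]$ and using $\omega(\tau^{k+1})\ge\tfrac12\omega(\tau^k)$ gives $\omega(\tau^k)\le\frac{2}{\log 2}\int_{\tau^{k+1}}^{\tau^k}\frac{\omega(r)}{r}\,dr$, whence
\[
\sum_{k=\ell}^\infty\omega(\tau^k)\le C\int_0^{\tau^\ell}\frac{\omega(r)}{r}\,dr\le C\int_0^{16d}\frac{\omega(r)}{r}\,dr\le C\int_0^d\frac{\omega(r)}{r}\,dr,
\]
where the last step uses $\int_d^{16d}\frac{\omega(r)}{r}\,dr\le C\omega(d)\le C\int_0^d\frac{\omega(r)}{r}\,dr$, the final inequality following from $\int_{d/2}^d\frac{\omega(r)}{r}\,dr\ge\tfrac{\log2}{2}\omega(d)$. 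The same chain shows $\omega(\tau^\ell)\le C\omega(d)\le C\int_0^d\frac{\omega(r)}{r}\,dr$, so $J_3$ is absorbed into the $\int_0^d$ term. Second, writing $\tau^{-k(\frac1\beta-1)}=(\tau^k)^{1-\frac1\beta}$ and comparing with $\int_{\tau^{k+1}}^{\tau^k}\frac{\omega(r)}{r^{1/\beta}}\,dr$ gives $(\tau^k)^{1-\frac1\beta}\omega(\tau^k)\le C\int_{\tau^{k+1}}^{\tau^k}\frac{\omega(r)}{r^{1/\beta}}\,dr$, so that
\[
\sum_{k=2}^\ell\tau^{-k(\frac1\beta-1)}\omega(\tau^k)\le C\int_{\tau^{\ell+1}}^{\tau^2}\frac{\omega(r)}{r^{1/\beta}}\,dr\le C\int_d^1\frac{\omega(r)}{r^{1/\beta}}\,dr,
\]
using $\tau^{\ell+1}>4d>d$ and $\tau^2<1$. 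Multiplying by $d^{\frac1\beta-1}$ converts the $J_2$ sum into the third term of \eqref{transres} and the $\|u\|_{L^\infty}$ contribution into the first.

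Combining these conversions yields
\[
J+K\le C\Big(d^{\frac1\beta-1}\sup_{B_\beta(0,1)}|u|+\int_0^d\frac{\omega(r)}{r}\,dr+d^{\frac1\beta-1}\int_d^1\frac{\omega(r)}{r^{1/\beta}}\,dr\Big),
\]
and summing over $j=1,\ldots,2n-2$ absorbs a factor $2n-2$ into $C=C(n,\beta)$, giving \eqref{transres}. I expect the genuine difficulty to have been discharged already in the preceding lemmas---particularly the weighted transversal gradient bounds of Lemmas \ref{lemma 2.31} and \ref{lemma 2.4} and the three-case geodesic integration behind Corollary \ref{j2cor}, where the singular weight $(r_n)^{\frac1\beta-1}$ and the factor $d^{\frac1\beta-1}$ first enter. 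For this proposition the only real care needed is bookkeeping: tracking the exponent $1-\frac1\beta$ through the comparisons, verifying that the limits $\tau^\ell\approx d$ and $\tau^{\ell+1}>d$ place the converted integrals over subintervals of $[0,d]$ and $[d,1]$ respectively, and confirming that the $\omega(\tau^\ell)$ term and the tail $\int_d^{16d}$ are absorbed into $\int_0^d\frac{\omega(r)}{r}\,dr$ rather than producing a spurious $\omega(d)$ contribution.
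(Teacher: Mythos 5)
Your proposal is correct and follows essentially the same route as the paper, whose proof of Proposition \ref{mixder} is simply the observation that it follows immediately from Lemma \ref{j143} and Corollary \ref{j2cor}. The sum-to-integral comparisons you carry out explicitly (using the monotonicity and doubling of $\omega$ together with $\tau^{\ell+4}\le d<\tau^{\ell+3}$) are exactly the conversion the paper leaves implicit, having already performed the analogous computation in \eqref{eqn:result1} for the tangential case, and your bookkeeping of the exponent $1-\frac{1}{\beta}$ and the interval endpoints checks out.
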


\begin{proof} The proposition is an immediate consequence of Lemma \ref{j143} and Corollary \ref{j2cor}.

\end{proof}

\subsection{Proof of Theorem \ref{theorem1}}  Theorem \ref{theorem1} immediately follows  by combining Proposition \ref{tander} and Proposition \ref{mixder}.

\subsection{The case of $\beta\in (0, 1/2)$ }

If $\beta\in (0, 1/2)$, Theorem \ref{theorem1} can  be proved by parallel arguments for the case of $\beta \in (1/2, 1)$ with slight modifications. 

\begin{prop} \label{half} Suppose $\beta \in (0, 1/2)$ and $f(x)$ is Dini continuous  on $B_\beta(0,1)$ with respect to $g_\beta$  for some $\beta\in (0,1)$. Let  
\begin{equation*}
\omega(r) = \sup_{d_\beta(z,w)< r, ~z, w \in B_\beta(0,1)} |f(z) - f(w)|.
\end{equation*}
 If  $u \in C^2(B_\beta(0, 1) \setminus \Ss ) \cap L^\infty (B_\beta(0, 1))$ is a solution of the conical Laplace equation (\ref{lapeq}), then there exists $C=C(n, \beta)>0$ such that 
\begin{equation}\label{halfest}
\begin{split}
& \sum_{i=2n-1}^{2n}\sum_{j=1}^{2n-2}| D_iD_j u(p) - D_i D_j u(q)  | + \sum_{i,j=1}^{2n-2} | D_iD_j u(p) - D_i D_j u(q)  |\\
\le & C\left( d \sup_{B_\beta(0,1)}|u| + \int_0^d \frac{\omega(r)}{r}dr + d\int_d^1 \frac{\omega(r)}{r^2} dr \right),
\end{split}
\end{equation}
where $d= d_\beta(p,q)$.

\end{prop}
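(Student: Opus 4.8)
The plan is to mirror the proof structure developed for $\beta \in (1/2,1)$ in subsections \ref{tangential estimate} and the transversal estimates, but to exploit the favorable sign of the exponent $\frac{1}{\beta}-1$ when $\beta < 1/2$. Recall that for $\beta \in (0,1/2)$ we have $\frac{1}{\beta}-1 > 1$, so that $\min\{\frac{1}{\beta}-1,1\}=1$ and the transversal H\"older exponent is no longer the bottleneck. First I would set up the same dyadic family of solutions $u_k$ to the Dirichlet problems \eqref{eqn:diri 2.4} and \eqref{eqn:diri 2.41}, centered at $p$ (and at $\tilde p \in \Ss$ when $k < k_p$), together with the companion functions $v_k$ centered at $q$, and reduce as before to the assumption $f(p)=0$ via the substitution $\hat u(s,z_n) = u(s,z_n) - \frac{f(p)}{2(n-1)}|s-s(p)|$. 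The tangential estimate for $\sum_{i,j=1}^{2n-2}|D_iD_j u(p) - D_iD_j u(q)|$ is identical to Proposition \ref{tander}, since its proof never used the restriction $\beta > 1/2$; so the content of the proposition is really to establish the \emph{same} bound (with exponent $1$ rather than $\frac{1}{\beta}-1$) for the mixed derivatives.

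The key step is to redo Lemmas \ref{lemma 2.31} and \ref{lemma 2.4}, i.e.\ the derivative estimates for $\partial_{r_n} D' h_k$ and $(r_n)^{-1}\partial_{\theta_n} D' h_k$ where $h_k = u_{k-1}-u_k$. The one-dimensional gradient bound in Proposition \ref{gradpoisson} holds for \emph{all} $\beta \in (1/2,1)$, but its analogue for $\beta \in (0,1/2)$ produces a logarithmic or bounded factor rather than the $\rho^{2\beta-1}$ growth. Concretely, when $\beta < 1/2$ the exponent $2\beta - 1 < 0$, and the integrals in Lemma \ref{lemma:2.3} over $\Omega_2, \Omega_4$ are controlled by $\int (2\tau^k)^{\cdots}$ in a way that yields $|\partial_{z_n}(D'h_k)| \le C r_n^{\frac{1}{\beta}-1}\tau^{-k}\omega(2\tau^k)$ with the transversal directions now \emph{better} behaved near $\Ss$. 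I would then integrate $\partial_{r_n} D' h_k$ along the same three-segment path $\gamma = \gamma_1+\gamma_2+\gamma_3$ of the previous subsection. The crucial simplification is that along $\gamma_2$, the elementary inequality $|a^\kappa - b^\kappa| \le |a-b|^\kappa$ for $\kappa \in (0,1)$ is no longer available (since $\frac{1}{\beta}-1 > 1$); instead, because $\frac{1}{\beta}-2 > -1$, the radial integrand $t^{\frac{1}{\beta}-2}$ is integrable up to $0$ and the mean-value estimate gives a clean $d$-linear bound $|r_n(q)^{\frac{1}{\beta}-1}-r_n(p)^{\frac{1}{\beta}-1}| \le C\, d$, which is exactly why the exponent on $d$ improves from $\frac{1}{\beta}-1$ to $1$.

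The main obstacle I expect is precisely this reorganization of the path-integral estimate: for $\beta > 1/2$ the segment $\gamma_2$ forced the fractional exponent $\frac{1}{\beta}-1$ because $t^{\frac{1}{\beta}-2}$ blows up at $t=0$, whereas for $\beta<1/2$ one must instead argue that the second derivatives $(D')^2 \partial_{r_n} h_k$ stay bounded (times the correct power of $r_n$) so that integrating the $g_\beta$-gradient along the \emph{minimal} geodesic $\gamma$ directly yields the linear-in-$d$ factor. I would therefore prove, in place of Lemma \ref{lemma 2.4}, the cleaner statement that $|\nabla_{g_\beta} \partial_{r_n} D' h_k|_{g_\beta} \le C\, \tau^{-k}\,\omega(2\tau^k)$ on $B_\beta(\tilde p, 2\tau^{k+4})\setminus \Ss$, which is uniform down to $\Ss$ when $\frac{1}{\beta}-1 \ge 1$, and then integrate to obtain $|\partial_{r_n} D' h_k(p) - \partial_{r_n} D' h_k(q)| \le C\, d\, \tau^{-k}\omega(2\tau^k)$. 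Summing over $2 \le k \le \ell$ together with the boundary terms $J_1, J_3, J_4$ (estimated verbatim as in Lemma \ref{j143}) and converting the resulting series $\sum_{k=\ell}^\infty \omega(\tau^k)$ and $d\sum_{k=2}^\ell \tau^{-k}\omega(\tau^k)$ into the integrals $\int_0^d \frac{\omega(r)}{r}dr$ and $d\int_d^1 \frac{\omega(r)}{r^2}dr$ via the standard comparison $\tau^k \sim r$, I would obtain the combined estimate \eqref{halfest}. The remaining cases $r_p \le 2d$ versus $r_p > 2d$ are handled by the same dichotomy as in the proofs of Propositions \ref{tander} and \ref{mixder}, with no new difficulty.
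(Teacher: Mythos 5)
Your proposal follows essentially the same route as the paper: the paper's proof of Proposition \ref{half} consists precisely of replacing Proposition \ref{gradpoisson} by its pointwise form $\left|\frac{\partial u}{\partial z}(z)\right|\le C\|u\|_{L^\infty}/\rho + C\|F\|_{L^\infty}|z|^{2\beta-1}$ and revising Lemmas \ref{lemma 2.31} and \ref{lemma 2.4} to the bounds $\left|\frac{\partial (D'h_k)}{\partial r_n}\right| + \left|\frac{\partial (D'h_k)}{r_n\partial\theta_n}\right|\le C\, r_n\,\tau^{-k}\omega(2\tau^k)$ and $\le C\,\tau^{-k}\omega(2\tau^k)$ for the conical second derivatives (uniform down to $\Ss$, since $\frac1\beta-2>0$ when $\beta<1/2$), after which direct integration along the minimal geodesic gives the linear-in-$d$ factor, exactly the cleaner replacement lemma you propose. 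One minor caveat: your intermediate claim $|\partial_{z_n}(D'h_k)|\le C\, r_n^{\frac1\beta-1}\tau^{-k}\omega(2\tau^k)$ carries the wrong power of $r_n$ (the $\|F\|\,|z_n|^{2\beta-1}$ term yields $|\partial_{z_n}(D'h_k)|\lesssim r_n^{2-\frac1\beta}\tau^{-k}\omega$, equivalently conical first derivatives $\lesssim r_n\tau^{-k}\omega$), but the error is in the favorable direction and only the resulting uniform conical bounds are used, so the argument is unaffected.
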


\begin{proof}  Let us point out the major differences in the proof from that of Theorem \ref{theorem1}.  The estimate in Proposition \ref{gradpoisson} is pointwise:
 $$\left| \frac{\partial u}{\partial z} (z) \right|\le C\frac{\| u\|_{L^\infty(B_\rho(0))}}{\rho} + C \| F\|_{L^\infty(B_\rho(0))} |z|^{2\beta - 1},\quad \forall z\in B_{\rho/2}(0)\backslash\{0\}.$$
With this estimate, the statements in Lemmas \ref{lemma 2.31} and Lemma \ref{lemma 2.4} are revised as follows.
\begin{enumerate} 
\item  Lemma \ref{lemma 2.31}:
$$\left| \frac{\partial(D'h_k)}{\partial r_n}   \right|(z) + \left|\frac{\partial(D'h_k)}{r_n \partial\theta_n}  \right|(z) \le C  r_n \tau^{-k} \omega(2\tau^k). $$ 
\item Lemma \ref{lemma 2.4}:
 $$\left| \frac{\partial^2 (D' h_k)}{r_n^2 \partial (\theta_n)^2}   \right|(z) + \left| \frac{\partial^2 ( D' h_k) }{r_n\partial r_n\partial \theta_n}  \right|(z) + \left|\frac{\partial^2 (D' h_k) }{\partial (r_n)^2 }  \right|(z)\le C  \tau^{-k}\omega(2\tau^k).$$
\end{enumerate}

\end{proof}

\subsection{The case of $\beta=1/2$} In this case, $g_\beta$ is an orbifold metric. We can lift the equation on the double cover with $z_n = w^2$. Then the conical Laplace equation becomes the standard Laplace equation and one can directly apply the Schauder estimate on $\mathbb{R}^{2n}$. We obtain the same estimate as (\ref{halfest}).

\section{Parabolic Schauder estimates} \label{parabolic}

The goal of this section is to derive the $\mathcal{P}_\beta^{2, \alpha}$-estimate of the parabolic equation \begin{equation}\label{eqn:heat}\frac{\partial u}{\partial t} - \Delta_{g_\beta} u = f,\quad \text{in }\mathcal{Q}_\beta,\end{equation} where $f$ is a given Dini continuous function on $\overline{\mathcal{Q}_\beta}$ with respect to the conical parabolic distance.

\subsection{Notations}

We denote $\mathcal{Q}_\beta = B_\beta(0,1)\times (0,1]$ to be the space-time cylinder, and $$\partial_{\mathcal{P}} \mathcal{Q}_\beta =\xk{ \overline{B_\beta(0,1)}\times \{0\}} \cup \xk{ \partial B_\beta(0,1)\times (0,1)  }$$ to be the parabolic boundary of the cylinder $\mathcal{Q}_\beta$. Let 
$$\mathcal{S}_{\mathcal{P}}= \{ (p, t) ~|~ p\in \mathcal{S}, ~ t\in \mathbb{R} \} $$
be the parabolic singular set. 
The conical parabolic distance of  two points $Q_1 = (z_1,t_1)$ and $Q_2=(z_2,t_2)$ in $\mathcal{Q}_\beta$ as 
$$ d_{\mathcal{P}, \beta}(Q_1,Q_2) = \max\big\{ d_\beta(z_1,z_2),  \sqrt{|t_1 - t_2|}    \big  \}.$$
We denote 
$$\omega(r) = \sup\{ |f(Q_1) - f(Q_2)| : {{Q_1,Q_2\in \overline{\mathcal{Q}_\beta}, d_{P,\beta}(Q_1,Q_2)\le r}} \}$$ to be the oscillation function of $f$ over the cylinder $\overline{\mathcal{Q}_\beta}$.  

\begin{definition}\label{defn 3.1}
We say a function $u$ is $\mathcal P^2$ in the cylinder $\mathcal O_\beta$, if for each time $t\in (0,1]$, $u(\cdot, t)\in C^2(B_\beta(0,1)\backslash \sS)$, and $\frac{\partial u}{\partial t}\in C^0(\mathcal Q_\beta)$.

We define the $\mathcal P^{0,\alpha}_\beta$ norm of a function in $\mathcal Q_\beta$ similar to that in Definition \ref{c2holderdef}, using the parabolic distance $d_{\mathcal P, \beta}$. We define the $\mathcal P^{2,\alpha}_\beta$ norm in $\mathcal Q_\beta$ as 
\begin{equation*}\begin{split}
\| u\|_{\mathcal P^{2,\alpha}_\beta} = & \| u\|_{C^0(\mathcal Q_\beta)} + \sum_{i=1}^{2n} \| D_i u\|_{C^0(\mathcal Q_\beta)} + \| \partial_t u \|_{\mathcal P^{0,\alpha}_\beta(\mathcal Q_\beta)} \\
& + \sum_{i=1}^{2n} \sum_{j=1}^{2n-2} \| D_i D_j u\|_{\mathcal P^{0,\alpha}_\beta(\mathcal Q_\beta)} + \big\| |z_n|^{2-2\beta} \frac{\partial ^2 u}{\partial z_n\partial \overline z_n} \big\|_{\mathcal P^{0,\alpha}_\beta(\mathcal Q_\beta)}
\end{split}\end{equation*}
\end{definition}

Suppose $u\in \mathcal{P}^2 (\mathcal{Q}_\beta\setminus \mathcal{S}_{\mathcal{P}})\cap C^0(\overline{\mathcal{Q}_\beta})$ solves the Dirichlet problem for the conical   heat equation 
\begin{equation}\label{eqn:heat Diri}
\left\{\begin{aligned}
\frac{\partial u}{\partial t} - \Delta_{g_\beta} u = 0,\quad \text{in }\mathcal{Q}_\beta\\
u(z,t) = \varphi(z,t),\quad \text{on }\partial_{\mathcal{P}} \mathcal{Q}_\beta,
\end{aligned}\right.\end{equation}
for some given continuous function $\varphi\in C^0(\partial_{\mathcal{P}} \mathcal{Q}_\beta)$. Without loss of generality, we assume $\varphi$ can be continuously extended to $\overline{ \mathcal{Q}_\beta}$.

Applying the same barrier function as in Lemma \ref{mp lemma}, we have the following maximum principle for the conical heat equation. 
\begin{lemma}\label{heat MP lemma}
Suppose $u\in \mathcal{P}^2 (\mathcal{Q}_\beta \backslash \mathcal{S}_{\mathcal{P}})\cap C^0(\overline{\mathcal{Q}_\beta})$ solves the equation \eqref{eqn:heat Diri}, then
$$\inf_{\overline{\mathcal{Q}_\beta}} \varphi \le \inf_{\mathcal{Q}_\beta} u \le \sup_{\mathcal{Q}_\beta} u \le \sup_{\overline{\mathcal{Q}_\beta} } \varphi.$$
\end{lemma}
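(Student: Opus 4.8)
The plan is to mimic the proof of the elliptic maximum principle (Lemma \ref{mp lemma}) by introducing a logarithmic barrier that diverges to $-\infty$ along the parabolic singular set $\mathcal{S}_{\mathcal{P}}$, thereby ruling out any extremum on the singular locus and reducing the statement to the classical parabolic maximum principle on the smooth part. The key observation is that the function $\log|z_n|^2$ is harmonic away from $\mathcal{S}$ and time-independent, so it interacts trivially with the heat operator.

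\medskip

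First I would define, for any $\epsilon>0$, the perturbed function
\begin{equation*}
u_\epsilon(z,t) = u(z,t) + \epsilon \log|z_n|.
\end{equation*}
Since $\log|z_n|$ is time-independent and satisfies $\Delta_{g_\beta} \log|z_n| = 0$ on $\mathcal{Q}_\beta \setminus \mathcal{S}_{\mathcal{P}}$, we compute
\begin{equation*}
\Big( \frac{\partial}{\partial t} - \Delta_{g_\beta} \Big) u_\epsilon = \Big( \frac{\partial}{\partial t} - \Delta_{g_\beta} \Big) u = 0 \quad \text{in } \mathcal{Q}_\beta \setminus \mathcal{S}_{\mathcal{P}}.
\end{equation*}
Next I would note that because $u$ is continuous on the compact set $\overline{\mathcal{Q}_\beta}$ it is bounded, so $u_\epsilon(z,t) \to -\infty$ as $(z,t) \to \mathcal{S}_{\mathcal{P}}$. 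Consequently the supremum of $u_\epsilon$ over $\overline{\mathcal{Q}_\beta}$ cannot be attained at any point of $\mathcal{S}_{\mathcal{P}}$, and in particular the maximum is achieved on the smooth part of $\overline{\mathcal{Q}_\beta}$.

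\medskip

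Then I would apply the standard parabolic maximum principle to $u_\epsilon$ on the smooth region: a caloric function (here a subsolution, since the added term contributes nothing to the heat operator) attains its maximum on the parabolic boundary $\partial_{\mathcal{P}} \mathcal{Q}_\beta$. Thus for any fixed $(z,t) \in \mathcal{Q}_\beta \setminus \mathcal{S}_{\mathcal{P}}$,
\begin{equation*}
u_\epsilon(z,t) \le \sup_{\partial_{\mathcal{P}} \mathcal{Q}_\beta} u_\epsilon \le \sup_{\partial_{\mathcal{P}} \mathcal{Q}_\beta} u = \sup_{\overline{\mathcal{Q}_\beta}} \varphi,
\end{equation*}
where the middle inequality uses $\log|z_n| \le 0$ on $\overline{B_\beta(0,1)}$ (after a harmless normalization, since $|z_n|$ is bounded, one absorbs the constant into $\epsilon$). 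Letting $\epsilon \to 0$ gives $u(z,t) \le \sup_{\overline{\mathcal{Q}_\beta}} \varphi$, and hence $\sup_{\mathcal{Q}_\beta} u \le \sup_{\overline{\mathcal{Q}_\beta}} \varphi$. The lower bound $\inf_{\mathcal{Q}_\beta} u \ge \inf_{\overline{\mathcal{Q}_\beta}} \varphi$ follows by applying the same argument to $-u$.

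\medskip

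The one point requiring care, which I expect to be the main obstacle, is justifying the parabolic maximum principle on a domain whose boundary $\partial_{\mathcal{P}}\mathcal{Q}_\beta$ touches the singular set $\mathcal{S}_{\mathcal{P}}$ along the divisor: one must confirm that the extremum genuinely migrates to the smooth portion of the parabolic boundary rather than escaping to the singular corner, and that the regularity hypothesis $u \in \mathcal{P}^2(\mathcal{Q}_\beta \setminus \mathcal{S}_{\mathcal{P}}) \cap C^0(\overline{\mathcal{Q}_\beta})$ suffices to invoke the classical result on compactly contained smooth subdomains. The divergence of the barrier $\epsilon \log|z_n|$ handles exactly this, forcing the maximizing sequence away from $\mathcal{S}_{\mathcal{P}}$ uniformly, so that a compactness argument on the smooth part applies. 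Since the perturbation is purely spatial and time-independent, no new subtlety arises from the parabolic nature of the problem beyond the standard application of the maximum principle for the heat equation.
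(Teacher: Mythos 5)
Your proof is correct and takes essentially the same approach as the paper, which proves this lemma precisely by applying the barrier from Lemma \ref{mp lemma}: perturbing to $u_\epsilon = u + \epsilon\log|z_n|$, noting this term is time-independent and $g_\beta$-harmonic off $\mathcal{S}$ so that $u_\epsilon$ remains caloric, forcing the extremum off $\mathcal{S}_{\mathcal{P}}$, invoking the classical parabolic maximum principle, and letting $\epsilon\to 0$. Your closing remarks on exhausting by smooth subdomains away from the singular set match the implicit content of the paper's one-line argument.
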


In particular, the conical heat equation \eqref{eqn:heat Diri} admits a unique solution. 
\begin{corr}
If the Dirichlet boundary value problem \eqref{eqn:heat Diri} is solvable in $ \mathcal{P}^2 (\mathcal{Q}_\beta \backslash\mathcal{S}_{\mathcal{P}})\cap C^0(\overline{\mathcal{Q}_\beta}) $, the solution must be unique.
\end{corr}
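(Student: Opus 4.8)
The plan is to deduce uniqueness directly from the maximum principle established in Lemma \ref{heat MP lemma}, following the standard linear argument. Suppose $u_1, u_2 \in \mathcal{P}^2(\mathcal{Q}_\beta \backslash \mathcal{S}_{\mathcal{P}}) \cap C^0(\overline{\mathcal{Q}_\beta})$ are both solutions of the Dirichlet problem \eqref{eqn:heat Diri} with the same boundary datum $\varphi \in C^0(\partial_{\mathcal P}\mathcal{Q}_\beta)$. First I would set $w = u_1 - u_2$ and observe that, since the conical heat operator $\partial_t - \Delta_{g_\beta}$ is linear, $w$ solves the homogeneous equation $\partial_t w - \Delta_{g_\beta} w = 0$ in $\mathcal{Q}_\beta$, while $w = 0$ on the parabolic boundary $\partial_{\mathcal P}\mathcal{Q}_\beta$.

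The key point to check is that $w$ genuinely lies in the function class to which Lemma \ref{heat MP lemma} applies. This is immediate: the difference of two functions in $\mathcal{P}^2(\mathcal{Q}_\beta \backslash \mathcal{S}_{\mathcal{P}}) \cap C^0(\overline{\mathcal{Q}_\beta})$ is again in this class, because $\mathcal{P}^2$-regularity away from $\mathcal{S}_{\mathcal P}$ and continuity up to $\overline{\mathcal{Q}_\beta}$ are both preserved under subtraction. Hence $w$ satisfies precisely the hypotheses of Lemma \ref{heat MP lemma} with vanishing parabolic boundary values.

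Applying Lemma \ref{heat MP lemma} to $w$ then yields
$$0 = \inf_{\overline{\mathcal{Q}_\beta}} (0) \le \inf_{\mathcal{Q}_\beta} w \le \sup_{\mathcal{Q}_\beta} w \le \sup_{\overline{\mathcal{Q}_\beta}} (0) = 0,$$
so $w \equiv 0$ on $\mathcal{Q}_\beta$, that is, $u_1 = u_2$ on $\mathcal{Q}_\beta$, and by continuity on $\overline{\mathcal{Q}_\beta}$ as well. This establishes the asserted uniqueness.

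I do not expect any genuine obstacle here, as the result is a formal consequence of the maximum principle. The only place warranting a line of care is the verification that the difference $w$ remains admissible for Lemma \ref{heat MP lemma} near the singular set $\mathcal{S}_{\mathcal P}$; this is handled exactly as in the proof of Lemma \ref{heat MP lemma} itself, where the barrier $\epsilon \log|z_n|$ was used to rule out the supremum or infimum being attained on $\mathcal{S}_{\mathcal P}$, and no additional argument beyond the linearity already invoked is required.
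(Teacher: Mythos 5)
Your proof is correct and takes essentially the same route as the paper: the corollary is stated there as an immediate consequence of Lemma \ref{heat MP lemma}, obtained precisely by applying the maximum principle to the difference $w=u_1-u_2$ of two solutions, which solves the homogeneous equation with vanishing data on $\partial_{\mathcal P}\mathcal{Q}_\beta$. Your extra remark about admissibility of $w$ near $\mathcal{S}_{\mathcal P}$ is also accurate, since the class $\mathcal{P}^2(\mathcal{Q}_\beta\backslash\mathcal{S}_{\mathcal P})\cap C^0(\overline{\mathcal{Q}_\beta})$ is closed under subtraction and the barrier argument in the lemma handles the singular set.
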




\subsection{Conical heat equations} 
In this subsection, we will obtain a parabolic gradient estimate of Li-Yau for conical heat equation. The following proposition is  the standard Li-Yau gradient estimate  for positive solution to the heat equation (\cite{LY}, see also Theorem 4.2 in \cite{SY}).
\begin{prop}\label{prop LY}
Let $(M,g)$ be a complete manifold with $\ric(g)\ge 0$, and $B(p,R)$ be the geodesic ball with center $p\in M$ and radius $R>0$. Let $u$ be a positive solution to the heat equation $\partial_t u - \Delta_g u = 0$ on $B(p,R)$, then there exists $C=C(n)>0$ such that for all $t>0$, %
$$\sup_{B(p,R/2)} \bk{ \frac{\abs{\nabla u}}{u^2} -  \frac{2u_t}{u}  }\le \frac{C}{R^2} + \frac{2n}{t} ,$$
where $u_t = \ddt{u}$.

\end{prop}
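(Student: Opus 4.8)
The plan is to follow the classical Li--Yau argument via the maximum principle applied to a localized gradient quantity. First I would pass to $f = \log u$, which is well-defined since $u > 0$; from $u_t = \Delta u$ one computes that $f$ satisfies the nonlinear equation
$$f_t = \Delta f + |\nabla f|^2,$$
so that the quantity to be bounded is exactly $|\nabla f|^2 - 2 f_t$ (the left-hand side of the statement should read $|\nabla u|^2/u^2$). I would set $\alpha = 2$ and introduce the localized function $F = t\,(|\nabla f|^2 - \alpha f_t)$, whose supremum over $B(p,R/2)$ is what must be estimated.

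The heart of the argument is a differential inequality for $F$. Writing $G = |\nabla f|^2 - \alpha f_t$, the Bochner formula gives
$$\Delta |\nabla f|^2 = 2|\nabla^2 f|^2 + 2\langle \nabla f, \nabla \Delta f\rangle + 2\,\ric(\nabla f, \nabla f),$$
and since $\ric(g) \ge 0$ the last term is dropped. Combining this with $\partial_t |\nabla f|^2 = 2\langle \nabla f, \nabla f_t\rangle$ and with the fact that $f_t$ itself solves $(\Delta - \partial_t)f_t = -2\langle \nabla f, \nabla f_t\rangle$, the cross terms telescope and I obtain
$$(\Delta - \partial_t) G \ge 2|\nabla^2 f|^2 - 2\langle \nabla f, \nabla G\rangle.$$
Using $|\nabla^2 f|^2 \ge \tfrac1n (\Delta f)^2$ together with $\Delta f = f_t - |\nabla f|^2$, the Hessian term becomes a good quadratic term, and multiplying through by $t$ yields an inequality of the schematic form
$$(\Delta - \partial_t) F \ge \frac{2t}{n}(\Delta f)^2 - 2\langle \nabla f, \nabla F\rangle - \frac{F}{t}.$$

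To localize I would choose a cutoff $\phi$ with $\phi \equiv 1$ on $B(p,R/2)$, $\mathrm{supp}\,\phi \subset B(p,R)$, and $|\nabla \phi|^2/\phi \le C/R^2$, $\Delta \phi \ge -C/R^2$; the lower bound on $\Delta \phi$ uses the Laplacian comparison theorem, which is available precisely because $\ric \ge 0$. I would then apply the maximum principle to $\phi F$ on $B(p,R) \times (0,T]$: at an interior-space, terminal-time maximum one has $\nabla(\phi F) = 0$ and $(\partial_t - \Delta)(\phi F) \ge 0$, which converts the first-order terms $\langle \nabla f, \nabla F\rangle$ and $\langle \nabla \phi, \nabla F\rangle$ into controllable quantities. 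The main obstacle is exactly this step: one must repeatedly use the Cauchy--Schwarz and Young inequalities to absorb all the gradient cross terms into the good term $\tfrac{2t}{n}(\Delta f)^2$, keeping track that the resulting constants depend only on $n$, and then solve the resulting quadratic inequality for $\sup(\phi F)$. Carrying out this absorption cleanly, and noting that the initial contribution vanishes since $F = 0$ at $t = 0$, produces a bound of the form $\sup_{B(p,R/2)} F \le C(n)\,(1 + t^2/R^2)$; dividing by $t$ gives the stated estimate. Since the statement is purely Riemannian and standard, one may alternatively simply invoke \cite{LY} or Theorem 4.2 of \cite{SY}.
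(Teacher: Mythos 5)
Your approach is sound, but note that the paper does not actually prove this proposition: it is stated as the standard Li--Yau gradient estimate and justified purely by citation to \cite{LY} and to Theorem 4.2 of \cite{SY} --- precisely the fallback you offer in your last sentence. Your text is therefore a reconstruction of the classical proof rather than a parallel of anything in the paper, and as a reconstruction it is essentially correct: $f=\log u$ satisfies $f_t=\Delta f+|\nabla f|^2$; for $G=|\nabla f|^2-\alpha f_t$ the Bochner formula combined with $(\Delta-\partial_t)f_t=-2\langle\nabla f,\nabla f_t\rangle$ does yield $(\Delta-\partial_t)G\ge 2|\nabla^2 f|^2-2\langle\nabla f,\nabla G\rangle$ after $\ric(g)\ge 0$ is used to discard the curvature term, and the choice $\alpha=2>1$ is what makes the localization work, since $\Delta f=-\alpha^{-1}\big((\alpha-1)|\nabla f|^2+G\big)$, so the good term $\tfrac{2t}{n}(\Delta f)^2$ contains the quartic piece $(\alpha-1)^2|\nabla f|^4/\alpha^2$ needed to absorb the $\langle\nabla\phi,\nabla F\rangle$ and $\langle\nabla f,\nabla(\phi F)\rangle$ cross terms. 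Three small repairs. First, the left-hand side you flag as a typo is in fact the paper's macro convention, under which the displayed quantity is indeed $|\nabla u|^2/u^2 - 2u_t/u$, as you expect. Second, the distance function is not smooth at the cut locus, so before applying the maximum principle to $\phi F$ you need Calabi's trick (or a support-function argument) to justify the differential inequalities for the cutoff; this is standard but should be said. Third, the bound at the space-time maximum should be $\sup \phi F\le C(n)\big(1+t/R^2\big)$, \emph{linear} in $t$, not $C(n)(1+t^2/R^2)$: your quadratic version would, after dividing by $t$, produce $C(n)\,t/R^2$ instead of the $t$-independent term $C/R^2$, and so would fail to give the stated estimate for large $t$. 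Solving the quadratic inequality at the maximum point does give the linear bound --- this is exactly how one arrives at $\tfrac{C\alpha^4}{(\alpha-1)R^2}+\tfrac{n\alpha^2}{2t}$, which at $\alpha=2$ is $\tfrac{C}{R^2}+\tfrac{2n}{t}$ --- so the fix is bookkeeping in the absorption step, not a new idea.
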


The following corollary is a straightforward consequence of Proposition \ref{prop LY}.
\begin{corr}\label{cor LY}
With the same assumptions in Proposition \ref{prop LY},  there exists $C=C(n)>0$ such that  for all $t\in (0, R^2)$
$$\sup_{B(p,R/2)} \abs{\nabla u}(t)\le C\bk{\frac {1}{R^2} +\frac 1 t  } \left( \osc_{B(p, R)\times [0, R^2]} u \right)^2$$ 
and 
$$\sup_{B(p,R/2)}\ba{\Delta_ g u}(t) = \sup_{B(p,R/2)} \ba{\frac{\partial u}{\partial t}}\le C\bk{\frac {1}{R^2} + \frac 1 t} \left( \osc_{B(p, R)\times [0, R^2]} u \right) .$$
\end{corr}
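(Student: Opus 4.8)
The plan is to reduce the general (possibly sign-changing) solution $u$ to the positive-solution setting of Proposition \ref{prop LY} by an additive shift, and then to split the combined Li--Yau quantity into separate gradient and time-derivative bounds using the sign of $|\nabla u|^2$ together with the symmetry $u \leftrightarrow -u$.

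First I would treat the degenerate case: if $N := \osc_{B(p,R)\times[0,R^2]} u = 0$, then $u$ is constant in space-time and both inequalities hold trivially. Assuming $N>0$, set $m = \inf_{B(p,R)\times[0,R^2]} u$ and define $v = u - m + N$. Since $v$ differs from $u$ by a constant it again solves $\ddt v - \Delta_g v = 0$, and by construction $N \le v \le 2N$ on $B(p,R)\times[0,R^2]$, so $v$ is a \emph{positive} solution with $\nabla v = \nabla u$ and $\ddt v = \ddt u$. Applying Proposition \ref{prop LY} to $v$ gives, at every point of $B(p,R/2)$ and for the fixed $t\in(0,R^2)$,
\begin{equation*}
\frac{|\nabla u|^2}{v^2} - \frac{2\,\ddt u}{v} \le \frac{C}{R^2} + \frac{2n}{t} =: A, \qquad A \le C(n)\bk{\frac{1}{R^2}+\frac{1}{t}}.
\end{equation*}

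Next I would separate the two contributions. Discarding the nonnegative term $|\nabla u|^2/v^2$ yields $\ddt u \ge -\tfrac12 A v \ge -AN$. Running the identical argument with $-u$ in place of $u$ (which solves the same equation and has oscillation $N$) produces the reverse bound $\ddt u \le AN$, hence $\ba{\ddt u} \le AN$; since $\ddt u = \Delta_g u$, this is precisely the claimed Laplacian estimate. For the gradient I would multiply the displayed inequality by $v^2$ to get $|\nabla u|^2 \le A v^2 + 2v\,\ddt u$, and then insert $v\le 2N$ and $\ddt u \le AN$ to obtain $|\nabla u|^2 \le 4AN^2 + 4AN^2 = 8AN^2$. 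Absorbing the numerical factors and the $n$-dependence into $C$ gives both inequalities of the corollary.

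The only step that is not completely mechanical is this separation: Proposition \ref{prop LY} bounds only the combination $\tfrac{|\nabla u|^2}{v^2} - \tfrac{2u_t}{v}$, not either term individually. The device of applying the one-sided Li--Yau inequality to both $u$ and $-u$ after the positivity-restoring shift is exactly what converts it into two-sided control of $\ddt u$ and then of $|\nabla u|^2$; I expect this to be the main, if minor, technical point.
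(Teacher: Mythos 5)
Your proof is correct and takes essentially the same approach as the paper: the paper likewise restores positivity by an additive normalization and applies Li--Yau to both $u$ and its reflection $v=\sup u - u$, which is exactly your $u\leftrightarrow -u$ device after the shift. The only cosmetic differences are that the paper adds the two Li--Yau inequalities to cancel $u_t$ and get $|\nabla u|^2\le C\left(R^{-2}+t^{-1}\right)u(\sup u-u)$ directly (extracting the $\Delta_g u$ bound from the one-sided inequalities afterwards), whereas you bound $u_t$ two-sidedly first and substitute back; your shift by $+N$ also keeps the solution strictly positive, a minor point the paper's normalization $u-\inf u$ glosses over.
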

\begin{proof}
Replacing the positive solution $u$ by $u-\inf u$ if necessary, we may assume that $u\le \osc u$. We let  $A = \sup_{B(p,R)\times [0,R^2]} u$ and   $v = A - u$. Clearly $v$ also satisfies the heat equation and by Li-Yau gradient estimates we have on $B(p,R/2)$, 
\begin{equation}\label{eqn:LY 1}
\frac{\abs{\nabla u}}{v} \le  2 v_t + C\bk{ \frac{1}{R^2} + \frac 1 t  } v = - 2 u_t + C\bk{\frac{1}{R^2} + \frac 1 t} v,
\end{equation}
and  by Proposition \ref{prop LY}  we also have
\begin{equation}\label{eqn:LY 2}
\frac{\abs{\nabla u}}{u} \le 2 u_t  + C\bk{ \frac {1}{R^2} + \frac 1 t  } u.
\end{equation}
Adding \eqref{eqn:LY 1} and \eqref{eqn:LY 2},  we have 
\begin{equation*}
\bk{ \frac 1 u + \frac  1 v  } \abs{\nabla u} \le C \bk{ \frac{1}{R^2} + \frac 1 t  } \xk{ u+ v},
\end{equation*}
from which it follows that on $B(p,R/2)$
$$\abs{\nabla u}\le C\bk{ \frac 1{R^2} + \frac 1 t  } u (A - u) \le C\bk{ \frac 1{R^2} + \frac 1 t  } \left( \osc_{B(p, R)\times [0, R^2]} u \right)^2.$$
The estimate for $\Delta u$ follows easily from the fact that $\ddt u = \Delta u$. 
 
\end{proof}

Given Corollary \ref{cor LY}, we are ready to show the existence of solution to the equation \eqref{eqn:heat Diri}. 
\begin{prop}\label{prop heat eqn}
Given any $\varphi\in C^0(\overline{\mathcal{Q}_\beta})$, there exists a unique $u\in \mathcal{P}^2 (\mathcal{Q}_\beta \setminus \mathcal{S}_{\mathcal{P}} )\times C^0(\overline{\mathcal{Q}_\beta})$ solving  equation \eqref{eqn:heat Diri}.
\end{prop}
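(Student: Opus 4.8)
Uniqueness is already contained in the maximum principle of Lemma \ref{heat MP lemma}. To construct a solution the plan is to mirror the elliptic construction of Section \ref{section 2.3}, replacing the Cheng--Yau estimate by the Li--Yau estimate of Corollary \ref{cor LY}. For each $\epsilon>0$ let $g_\epsilon$ be the smooth K\"ahler metric in \eqref{appmodelmetric}, and solve the smooth parabolic Dirichlet problem
\[
\partial_t u_\epsilon - \Delta_{g_\epsilon} u_\epsilon = 0 \ \ \text{in } \mathcal{Q}_\beta, \qquad u_\epsilon = \varphi \ \ \text{on } \partial_{\mathcal{P}}\mathcal{Q}_\beta .
\]
Since $g_\epsilon$ is a genuine smooth Riemannian metric and $\partial_{\mathcal P}\mathcal Q_\beta$ admits the barriers built in Lemma \ref{harmeqn}, this linear parabolic problem is classically solvable (Perron's method together with standard interior parabolic regularity), and the maximum principle gives the $\epsilon$-uniform bound $\|u_\epsilon\|_{L^\infty(\mathcal Q_\beta)}\le \sup|\varphi|$, hence $\osc_{\mathcal Q_\beta}u_\epsilon\le 2\sup|\varphi|$.

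Next I would establish $\epsilon$-uniform interior estimates. As computed in the proof of Lemma \ref{grad 1}, $\ric(g_\epsilon)=\ddbar\log(\abs{z_n}+\epsilon)^{1-\beta}\ge 0$, so Corollary \ref{cor LY} applies to $u_\epsilon$ on every $g_\epsilon$-geodesic ball in $\mathcal Q_\beta$; crucially, because $(\mathbb C^n,g_\epsilon)$ is a \emph{complete smooth} manifold with $\ric\ge 0$, these bounds hold across $\mathcal S$ as well, giving control of $|\nabla_{g_\epsilon}u_\epsilon|_{g_\epsilon}$ and of $\partial_t u_\epsilon=\Delta_{g_\epsilon}u_\epsilon$ on $\{t\ge t_0\}$ depending only on $\sup|\varphi|$, $t_0$, and the distance to $\partial_{\mathcal P}\mathcal Q_\beta$. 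For small $\epsilon$ the $g_\epsilon$-distance is uniformly comparable to $d_\beta$, so $\{u_\epsilon\}$ is equicontinuous in space (uniformly across $\mathcal S$) and in time on every compact subset of $B_\beta(0,1)\times(0,1]$. By Arzel\`a--Ascoli a subsequence converges uniformly on such compacta to a limit $u\in L^\infty(\overline{\mathcal Q_\beta})$ that is continuous on $B_\beta(0,1)\times(0,1]$, in particular continuous across $\mathcal S$. Since $g_\epsilon\to g_\beta$ in $C^\infty_{loc}$ off $\mathcal S$, interior parabolic Schauder estimates upgrade the convergence to $C^\infty_{loc}(\mathcal Q_\beta\setminus\mathcal S_{\mathcal P})$, so $u\in\mathcal P^2(\mathcal Q_\beta\setminus\mathcal S_{\mathcal P})$ and $\partial_t u-\Delta_{g_\beta}u=0$ there.

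It remains to show $u\in C^0(\overline{\mathcal Q_\beta})$ with $u=\varphi$ on $\partial_{\mathcal P}\mathcal Q_\beta$, which is the delicate step; I would argue for the $u_\epsilon$ and let $\epsilon\to0$, exactly as in Lemma \ref{harmeqn}. On the lateral boundary $\partial B_\beta(0,1)\times(0,1)$ the elliptic barriers $\Psi_q$ of Lemma \ref{harmeqn} are time-independent with $\Delta_{g_\epsilon}\Psi_q\ge -C$ uniformly in $\epsilon$, so $\Psi_q-Ct$ is a parabolic subsolution (and when $z_n(q)=0$, where $\Psi_q=d_{\mathbb C^n}(\cdot,q')^2-4r^2$ is already $\Delta_{g_\epsilon}$-subharmonic, no modification is needed), and the two-sided comparison of Lemma \ref{harmeqn} carries over verbatim. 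On the initial slice $\overline{B_\beta(0,1)}\times\{0\}$ the barriers $\varphi(z_0,0)\pm\delta\pm\big(A|z-z_0|^2+Kt\big)$ are super/sub-solutions of $\partial_t-\Delta_{g_\epsilon}$ once $K$ is large, because $\Delta_{g_\epsilon}|z-z_0|^2$ is bounded uniformly in $\epsilon$ on $B_\beta(0,1)$ (the Euclidean quadratic is harmless at $\mathcal S$, unlike $d_\beta^2$ whose $g_\epsilon$-Laplacian blows up there). The genuine obstacle is the corner $\mathcal S_{\mathcal P}\cap\partial_{\mathcal P}\mathcal Q_\beta$, where the lateral boundary meets the singular locus: there, as in the case $z_n(q)=0$ of Lemma \ref{harmeqn}, I would use that $\overline{B_\beta(0,1)}$ and $\overline{B_{\mathbb C^n}(0,1)}$ touch only along $\mathcal S$, so the Euclidean subsolution still pins the value at $q$ independently of $\epsilon$. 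Assembling the three boundary contributions gives $u\in C^0(\overline{\mathcal Q_\beta})$ with the prescribed boundary values, completing the existence proof.
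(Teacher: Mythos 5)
Your construction is, in all essentials, the paper's own proof: approximate by the smooth metrics $g_\epsilon$ of \eqref{appmodelmetric}, get the uniform $L^\infty$ bound from the maximum principle, invoke the Li--Yau estimates of Corollary \ref{cor LY} (valid since $\ric(g_\epsilon)\ge 0$) for interior equicontinuity and for continuity of the limit across $\sS$, upgrade to smooth convergence away from $\mathcal S_{\mathcal P}$, and finish with barriers at the three pieces of $\partial_{\mathcal P}\mathcal Q_\beta$. The paper treats exactly your three boundary regimes: at $t_0=0$ it uses the barrier $e^{-d_{\mathbb C^n}(z,q)^2-\lambda t}-1$ (your quadratic $\varphi(z_0,0)\pm\delta\pm(A|z-z_0|^2+Kt)$ works just as well, and your observation that the Euclidean quadratic has uniformly bounded $\Delta_{g_\epsilon}$ is correct), and it handles the corner $z_n(q)=0$ via the Euclidean barrier $d_{\mathbb C^n}(z,q')^2-4$ exactly as you propose.

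There is, however, one step that fails as written: the lateral barrier ``$\Psi_q - Ct$''. It is indeed a parabolic subsolution, but it does not localize at the time $t_0$. A barrier for the lateral boundary point $(q,t_0)$ must vanish at $(q,t_0)$ and be $\le \varphi - \varphi(q,t_0)+\delta$ on all of $\partial_{\mathcal P}\mathcal Q_\beta$. Since $\Psi_q(q)=0$, your candidate equals $\varphi(q,t_0)-\delta-ACt$ along the whole segment $\{q\}\times(0,t_0)$; as $t\to 0$ this tends to $\varphi(q,t_0)-\delta$, which may exceed $\varphi(q,t)$ no matter how large $A$ is (the factor $A$ gives no help where $\Psi_q$ vanishes), so the comparison on the parabolic boundary cannot be arranged; and even when it can, the barrier's value at $(q,t_0)$ itself is $-ACt_0\neq 0$, so the comparison only yields $\liminf u\ge \varphi(q,t_0)-\delta-ACt_0$, which does not improve as $\delta\to 0$ (since $A\to\infty$). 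Replacing $-Ct$ by $-C(t-t_0)$ fails in the other direction: the barrier becomes positive at $(q,t)$ for $t<t_0$. The correct fix is the paper's: append $-\delta'(t-t_0)^2$, which vanishes exactly at $t=t_0$ and is negative elsewhere; its time derivative $-2\delta'(t-t_0)$ is absorbed by $\Delta_{g_\epsilon}\Psi_q\ge n-1$ when $z_n(q)=0$ (take $\delta'\le (n-1)/2$, as in the paper's $\phi_{q,2}$), and by choosing $A$ large in the Green-function barrier $A\bk{d_\beta(z)^2-1}+G(z)-r_q^{-(2n-2)}$ when $z_n(q)\neq 0$ (the paper's $\phi_{q,3}$, built from the elliptic barrier of Lemma \ref{harmeqn} and Proposition \ref{prop:2.1}). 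With this one substitution your argument coincides with the paper's proof.
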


\begin{proof} The strategy is to solve the Dirichlet boundary problem for the smooth metrics $g_\epsilon$ approximating  the conical metric $g_\beta$ and the limiting solution will solve \eqref{eqn:heat Diri}.

Let $u_\epsilon\in \mathcal{P}^2 (\mathcal{Q}_\beta)\cap C^0(\overline{\mathcal{Q}_\beta})$ solve the Dirichlet boundary problem 
\begin{equation}\label{eqn:app heat}
\left\{\begin{aligned}
\frac{\partial u_\epsilon}{\partial t} = \Delta_{g_\epsilon} u_\epsilon,\quad \text{in } \mathcal{Q}_\beta \\
u_\epsilon = \varphi,\quad \text{on }\partial_{\mathcal{P}} \mathcal{Q}_\beta ,
\end{aligned} \right.
\end{equation}
where $g_\epsilon$ is a smooth Riemannian metric defined in (\ref{appmodelmetric})  to approximate $g_\beta$ for $\epsilon \in (0,1)$. 
We immediately have following estimate by the maximum principle.
$$\| u_\epsilon\|_{L^\infty(\mathcal{Q}_\beta)}\le \| \varphi \|_{L^\infty(\overline{\mathcal{Q}_\beta})}.$$
Let $K \subset\subset K'  \subset\subset B_\beta(0,1)$  be arbitrarily  compact subsets in $B_\beta(0,1)$. Applying Corollary \ref{cor LY}, we have 
\begin{equation}\label{eqn:heat LY 1}
\sup_{K'} \abs{\nabla u_\epsilon}_{g_\epsilon}(\cdot,t)\le C(n,K',\| \varphi\|_\infty)\xk{1+t^{-1}}, 
\end{equation}
\begin{equation}\label{eqn:heat LY 2}
\sup_{K'} \ba{ \frac{\partial u_\epsilon}{\partial t}}\le C(n,K',\| \varphi\|_\infty)\xk{1+t^{-1}}
\end{equation}
Similarly we have
\begin{equation}\label{eqn:heat LY 3}
\sup_{K} \abs{\nabla ( D' u_\epsilon)}_{g_\epsilon}\le C(n,K,K',\| \varphi\|_\infty)\xk{1+ t^{-2}},
\end{equation}
and 
\begin{equation}\label{eqn:heat LY 4}
\sup_{K}\abs{\nabla \left((D')^2 u_\epsilon\right)}_{g_\epsilon}\le C(n,K,K', \| \varphi\|_\infty)\xk{ 1+ t^{-3} }.
\end{equation}
It follows from the standard elliptic estimates that the functions $u_\epsilon$ have uniform $\mathcal{P}^{2,\alpha}$ estimates on $K\backslash T_\delta(\sS) \times [\delta, 1]$, for a fixed $\delta>0$. So $u_\epsilon$ converges uniformly in $\mathcal{P}^{2 , \alpha }(K\backslash T_\delta(\sS)\times [\delta, 1])$-topology to a function $u\in \mathcal{P}^{2 , \alpha } (K\backslash T_\delta(\sS)\times [\delta, 1])$. Since $K$ is arbitrary, by taking a diagonal sequence, we may assume $u_\epsilon$ converges to $u$  uniformly on any compact subset of $B_\beta(0,1) \backslash\sS \times (0,1]$. Clearly   $u$ satisfies the equation $\partial_t u = \Delta_{g_\beta} u$ on $B_\beta\backslash \sS \times (0,1]$ and the estimates \eqref{eqn:heat LY 1}, \eqref{eqn:heat LY 2} and \eqref{eqn:heat LY 3}. In particular \eqref{eqn:heat LY 1} implies that $u$ can be continuously extended over $\sS$, so $u\in C^0(B_\beta\times (0,1])$.

\medskip

It remains to show $u$ is continuous up to boundary and it coincides with $\varphi$ on $\partial_{\mathcal{P}} \mathcal{Q}_\beta$. We will show that for any $(q,t_0)\in \partial_{\mathcal{P}} \mathcal{Q}_\beta$,  $$\lim_{ \mathcal{Q}_\beta \ni (z,t)\to (q,t_0)} u(z,t) = \varphi(q,t_0).$$

\smallskip
\noindent{\bf Case 1:} $t_0 = 0$ and $q\in \overline{B_\beta(0,1)}$. We will construct the barrier function 
$$\phi_{q,1}(z,t) = e^{-d_{\mathbb C^n}(z,q)^2 - \lambda t} - 1,$$
where $d_{\mathbb C^n}$ is the Euclidean distance and $\lambda>0$ is a constant to be determined. Direct calculations show that
\begin{align*}\Delta_{g_\epsilon} \phi_{q,1} =  & \xk{ -\Delta_\epsilon d_{\mathbb C^n}^2 + \abs{\nabla d^2_{\mathbb C^n}}_{g_\epsilon}  } e^{-d_{\mathbb C^n}(z,q)^2 - \lambda t}\\
= & \xk{ -(n-1)  - (\abs{z_n} + \epsilon)^{1-\beta} + \abs{\nabla d_{\mathbb C^n}^2}_{g_\epsilon}    } e^{-d_{\mathbb C^n}(z,q)^2 - \lambda t}\\
\ge & -(n+1) e^{-d_{\mathbb C^n}(z,q)^2 - \lambda t}\\
\ge & -\lambda e^{-d_{\mathbb C^n}(z,q)^2 - \lambda t} =  \frac{\partial \phi_{q,1}}{\partial t},
\end{align*}
if we choose $\lambda\ge n+1$.  $\phi_{q,1}$ is a continuous function on $\overline{\mathcal{Q}_\beta}$ with $\phi_{q,1}(q,t_0) = 0$ and $\phi_{q,1}(z,t)<0$ for any other $(z,t)\in \overline{\mathcal{Q}_\beta}$.

\smallskip

For any fixed $\delta>0$, by the continuity of $\varphi$ it follows that there exists a small space-time neighborhood $V$ of $(q,t_0)$ such that $\varphi(q,t_0) - \delta \le \varphi(z,t)$ for all $(z,t)\in V$. Moreover, on $\overline{\mathcal{Q}_\beta}\backslash V$ the function $\phi_q$ is bounded above by a negative constant, so by taking sufficiently large $A>0$, 
the function $\varphi_q^-$ defined by
$$\varphi_q^-(z,t) := \varphi(q,t_0) - \delta + A \phi_{q,1}(z,t)\le \varphi(z,t), $$
is a sub-solution of the heat equation, i.e. 
$\frac{\partial \varphi_q^-}{\partial t} \le \Delta_\epsilon \varphi_q^-$. 
Then $\varphi_q^-(z,t)\le u_\epsilon(z,t)$ for all $(z,t)\in \mathcal{Q}_{\beta}$ by the maximum priniciple. Letting $\epsilon\to 0$, we also have $\varphi_q^-(z,t)\le u(z,t)$ and so
$$\varphi(q,t_0) - \delta  = \lim_{(z,t)\to (q,t_0)}\varphi^-_q (z,t)\le \liminf_{(z,t)\to (q,t_0)} u(z,t).$$
Letting $\delta\to 0$, $\varphi(q,t_0)\le \liminf_{(z,t)\to (q,t_0)} u(z,t)$.

By similar argument we can show that $\varphi(q,t_0) \ge \limsup_{(z,t)\to (q,t_0)} u(z,t)$ by  considering the super-solution $\varphi_q^+(z,t) = \varphi(q,t_0) + \delta - A \phi_q(z,t)$ for appropriate $A>0$. 
Therefore,  
$$ \lim_{(z,t)\to (q,t_0)} u(z,t)= \varphi(q,t_0).$$

\medskip

\noindent{\bf Case 2:} $t_0>0$ and $q\in \partial B_\beta(0,1)$ with $z_n(q) = 0$. Let $q' = -q\in\mathbb C^n$ be the opposite point of $q$ with respect to $0\in\mathbb C^n$. We define the barrier function
$$\phi_{q,2}(z,t) = d_{\mathbb C^n}(z, q')^ 2  - 4 - \delta(t-t_0)^2$$
for a small $\delta>0$ to be determined.  Since $q$ is the unique furthest point in $B_\beta(0,1)$ to $q'$ under Euclidean distance, hence $\phi_{q,2}(q,t_0) = 0$ and $\phi_{q,2}(z,t)<0$ for all other $(z,t)\in \overline{\mathcal{Q}_\beta}$. Straightforward calculations show that 
$$\frac{\partial \phi_{q,2}}{\partial t} = -2\delta (t-t_0),$$
and
\begin{align*}
\Delta_\epsilon \phi_{q,2} = (n-1) + \beta^{-2}(\abs{z_n}+\epsilon)^{1-\beta} \ge n-1.
\end{align*}
Then $\partial_t \phi_{q,2}\le \Delta_\epsilon \phi_{q,2}$ for  $\delta \le (n-1)/2$. By the same argument as in Case 1, we see that $u$ is also continuous at $(q,t_0)\in\partial_{\mathcal{P}} \mathcal{\mathcal{Q}_\beta}$ and $  \lim_{(z,t)\to (q,t_0)} u(z,t) = \varphi(q,t_0).$

\medskip

\noindent{\bf Case 3:} $t_0>0$ and $q\in\partial B_\beta(0,1)$ with $z_n(q)\neq 0$. We are in the same situation as the case 2 in the proof of Proposition \ref{prop:2.1}, and we use the same notations as in Proposition \ref{prop:2.1}. We construct the following barrier function 
$$\phi_{q,3}(z,t) = A\bk{ d_\beta(z)^2 - 1  } + \bk{G(z) - \frac{1}{r_q^{2n-2}} }- \delta' (t-t_0)^2.$$
The remaining argument is the same as in Case 1 and Case 2.

\smallskip

Combining the results in the above three cases, we have completed the proof of the proposition.

\end{proof}

Furthermore, we also obtaine the conical gradient and Laplace estimates for $u$. 
 \begin{corr} \label{finalone}
Let $u\in \mathcal{P}^2 (B_\beta(0,R) \times (0,R^2])\cap L^\infty (\overline{B_\beta(0,R)  }\times[0,R^2])$ solve the heat equation $\partial_t u = \Delta_\beta u$ in $B_\beta(0,R)\backslash \sS\times (0,R^2]$. There exists a constant $C=C(n)$ such that
\begin{equation*}
\sup_{B_\beta(0,R/2) \backslash \sS} \abs{\nabla u}_{g_\beta} \le C\bk{\frac{1}{R^2} + \frac 1 t  } \| u\|_{L^\infty(B_\beta(0,R)\times [0,R^2])}^2, 
\end{equation*}
\begin{equation*}
\sup_{B_\beta(0,R/2)\backslash \sS} \ba{ \Delta_\beta u}=\sup_{B_\beta(0,R/2)\backslash \sS} \ba{ \frac{\partial u}{\partial t}  } \le C\bk{\frac{1}{R^2} + \frac 1 t  } \| u\|_{L^\infty(B_\beta(0,R)\times [0,R^2])}.
\end{equation*}
Moreover, the function $\ddt u$ can be continuously extended to $B_\beta(0,1)$. 
\end{corr}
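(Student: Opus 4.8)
The strategy is to deduce all three assertions from the smooth approximation $g_\epsilon\to g_\beta$ of Proposition \ref{prop heat eqn}, using crucially that $\ric(g_\epsilon)\ge0$, so that the Li--Yau estimate of Corollary \ref{cor LY} applies with a constant depending only on $n$. By the anisotropic dilation $(z_1,\dots,z_{n-1},z_n)\mapsto(\lambda z_1,\dots,\lambda z_{n-1},\lambda^{1/\beta}z_n)$ together with $t\mapsto\lambda^2 t$, under which $g_\beta$ rescales homogeneously and the conical heat equation is preserved, we may assume $R=1$. Recall from the proof of Proposition \ref{prop heat eqn} that $u$ is the limit, uniformly in $\mathcal P^{2,\alpha}$ on compact subsets of $(B_\beta(0,1)\setminus\sS)\times(0,1]$, of the solutions $u_\epsilon$ of the $g_\epsilon$-heat equation with boundary data $\varphi=u$; the maximum principle gives $\|u_\epsilon\|_{L^\infty}\le\|u\|_{L^\infty}$.

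For the gradient and Laplace bounds I would apply Corollary \ref{cor LY} to $u_\epsilon$ on $g_\epsilon$-geodesic balls comparable to $B_\beta(0,1)$, which is legitimate because $B_{g_\epsilon}(0,0.9)\subset B_\beta(0,1)\subset B_{g_\epsilon}(0,1)$ for small $\epsilon$. This yields, uniformly in $\epsilon$ and at every $x_0\in B_\beta(0,1/2)$,
\[
|\nabla u_\epsilon|_{g_\epsilon}(x_0,t)\le C(n)\bk{1+\tfrac1t}\,\|u\|_{L^\infty}^2,
\]
\[
\big|\Delta_{g_\epsilon}u_\epsilon\big|(x_0,t)=|\partial_t u_\epsilon|(x_0,t)\le C(n)\bk{1+\tfrac1t}\,\|u\|_{L^\infty},
\]
where I have used $(\osc u_\epsilon)^2\le 4\|u\|_{L^\infty}^2$ and $\osc u_\epsilon\le 2\|u\|_{L^\infty}$. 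The decisive feature is that the constant is $C(n)$, independent of $d_\beta(x_0,\sS)$, precisely because $\ric(g_\epsilon)\ge0$; this is what renders the bounds uniform up to the singular set. Letting $\epsilon\to0$ and using that $u_\epsilon\to u$ in $C^1_{\mathrm{loc}}$ and $C^2_{\mathrm{loc}}$ away from $\sS$ while $g_\epsilon\to g_\beta$, I obtain the two displayed estimates of the corollary on $B_\beta(0,1/2)\setminus\sS$, and the general $R$ follows by undoing the dilation.

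The remaining, and most delicate, assertion is the continuous extension of $\partial_t u$ across $\sS$. The key observation is that, because $\Delta_\beta$ has $t$-independent coefficients, $v:=\partial_t u$ is itself a solution of the conical heat equation on $\big(B_\beta(0,1/2)\setminus\sS\big)\times[\delta,1]$ for each $\delta>0$, and by the Laplace estimate just established it is bounded there. Applying the gradient estimate to $v$ --- through its approximants $v_\epsilon=\partial_t u_\epsilon$ and passing to the limit --- bounds $|\nabla v|_{g_\beta}$ locally near $\sS$, so $v$ is Lipschitz with respect to $d_\beta$ away from $\sS$. Since $d_\beta$ is a genuine distance on all of $B_\beta(0,1)$, for which $\sS$ lies at finite distance, a function with locally bounded $g_\beta$-gradient extends continuously across $\sS$, exactly as $u$ was extended in Proposition \ref{prop heat eqn}. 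This produces the continuous extension of $\partial_t u$ to $B_\beta(0,1)$ and completes the argument. I expect this last step to be the main obstacle: the mere boundedness of $\partial_t u=\Delta_\beta u$ does not by itself give continuity across the singular set, and one must first upgrade it to a heat-equation solution in order to invoke the gradient estimate and extract the requisite Lipschitz control.
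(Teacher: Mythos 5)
Your proposal is correct and follows essentially the same route as the paper: the corollary is an immediate byproduct of the proof of Proposition \ref{prop heat eqn}, namely applying the Li--Yau estimates of Corollary \ref{cor LY} (valid with constant $C(n)$ since $\ric(g_\epsilon)\ge 0$) to the approximating solutions $u_\epsilon$ and letting $\epsilon\to 0$, with the continuous extension of $\partial_t u$ across $\sS$ obtained exactly as the paper extends $u$ itself --- via the uniform gradient bound applied to $\partial_t u_\epsilon$, which again solves the heat equation because the coefficients are $t$-independent. Your added care about reducing to $R=1$ by the anisotropic dilation and about identifying $u$ with the limit of the $u_\epsilon$ on a slightly smaller cylinder is consistent with, and slightly more explicit than, the paper's implicit treatment.
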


\subsection{Proof of Theorem \ref{theorem2}} We can now prove Theorem \ref{theorem2} by the same argument in the proof of Theorem \ref{theorem1}, replacing the $g_\beta$-harmonic functions by solutions of $g_\beta$-heat equation,  since we have obtained existence and gradient estimate for solutions of conical heat equation \eqref{eqn:heat Diri} from Proposition \ref{prop heat eqn} and Corollary \ref{finalone}.

\bigskip

\noindent {\bf{Acknowledgements:}} Both authors thank Duong H. Phong and Qing Han for many insightful discussions.

\bigskip

\footnotesize

\end{document}